\documentclass[a4paper,USenglish,numberwithinsect,cleveref,thm-restate]{lipics-v2021}
\pdfoutput=1 
\hideLIPIcs  

\usepackage{mathtools,xspace} 
\usepackage{tikz}
\usetikzlibrary{decorations.pathreplacing}
\usepackage{algpseudocode}
\usepackage{algorithm}
\usepackage[dvipsnames,svgnames]{xcolor}
\usepackage{tikz}

\usetikzlibrary{positioning}
\usetikzlibrary{arrows}
\usetikzlibrary{decorations.pathmorphing}
\usetikzlibrary{decorations.markings}
\usetikzlibrary{shapes.geometric}
\tikzset{
    small circles/.style={circle,inner sep=2pt,fill=#1},
    hollow circles/.style n args={2}{circle,inner sep=#1,draw=#2,thick},
    stars/.style={star,inner sep=2pt}
}

\newtheorem{question}{Question}
\newcommand{\majcol}[1]{\textbf{[#1]}}
\newcommand{\configuration}[1]{$\textnormal{config}_{#1}$}

\newcommand{\floor}[1]{\lfloor #1 \rfloor}

\tikzset{
  plain/.style={fill=none,shape=rectangle,draw=none,inner sep=3pt}
}

\newcommand{\PSPACE}{\text{\normalfont PSPACE}\xspace}
\newcommand{\NP}{\text{\normalfont NP}\xspace}
\newcommand{\defproblem}[3]{
  \vspace{1mm}
\begin{center}
\noindent\fbox{

  \begin{minipage}{.9\linewidth}
  \begin{tabular*}{\linewidth}{@{\extracolsep{\fill}}lr} \textsc{#1}   \\ \end{tabular*}
  {\bf{Input:}} #2  \\
  {\bf{Question:}} #3
  \end{minipage}

  }
\end{center}
  \vspace{1mm}
}

\title{On the Majority Game Chromatic Number of Forests and Other Graphs} 

\author{Yash Chawda\footnote{Corresponding author}}{Department of Mathematics, Indian Institute of Technology Jodhpur, NH 62 Nagaur Road, Karwar, Jodhpur 342040, Rajasthan, India \and \url{https://yashu112.github.io}}{p25ma0204@iitj.ac.in}{https://orcid.org/0009-0003-3713-8556}{Research supported by the University Grants Commission (UGC) Junior Research Fellowship (JRF), Govt.\ of India.}

\author{Saraswati Girish Nanoti}{Department of Computer Science and Automation, Indian Institute of Science, C.\,V. Raman Avenue, Bengaluru 560012, Karnataka, India \and \url{https://sites.google.com/iitgn.ac.in/saraswati-girish-nanoti}}{saraswatig@iisc.ac.in}{https://orcid.org/0009-0009-7789-8895}{}

\author{Brahadeesh Sankarnarayanan}{Department of Mathematics, Indian Institute of Technology Jodhpur, NH 62 Nagaur Road, Karwar, Jodhpur 342040, Rajasthan, India \and \url{https://brahadeesh1994.github.io}}{brahadeesh@iitj.ac.in}{https://orcid.org/0000-0001-9191-1253}{}

\authorrunning{Y. Chawda and S.\,G. Nanoti and B. Sankarnarayanan} 

\Copyright{Yash Chawda and Saraswati Girish Nanoti and Brahadeesh Sankarnarayanan} 

\ccsdesc{Mathematics of computing~Graph coloring}
\ccsdesc{Theory of computation~Problems, reductions and completeness}
\ccsdesc{Mathematics of computing~Combinatorics}

\keywords{majority coloring game; NP-complete; PSPACE-complete; forest}

\category{} 

\relatedversion{} 

\acknowledgements{This article is dedicated to the memory of Prof.~Subir K. Ghosh (1953--2026).}

\nolinenumbers 

\EventEditors{John Q. Open and Joan R. Access}
\EventNoEds{2}
\EventLongTitle{42nd Conference on Very Important Topics (CVIT 2016)}
\EventShortTitle{CVIT 2016}
\EventAcronym{CVIT}
\EventYear{2016}
\EventDate{December 24--27, 2016}
\EventLocation{Little Whinging, United Kingdom}
\EventLogo{}
\SeriesVolume{42}
\ArticleNo{23}

\begin{document}
\maketitle

\begin{abstract}
A \emph{majority coloring} (also called an \emph{unfriendly partition}) of a graph \(G\) is a vertex coloring of \(G\) in which no vertex has more than half of its neighbors colored with its own color.
The least number of colors required for a majority coloring of \(G\) is the \emph{majority chromatic number} \(\mu(G)\).
Lov\'asz~\cite{Lovasz1966} (1966) gave an elegant proof that \(\mu(G) = 2\) for every nonempty finite graph.
The Unfriendly Partition Conjecture due to Cowen--Emerson~\cite{CowenEmerson1985} (1985) states that \(\mu(G) = 2\) for any infinite graph \(G\); it is known that \(\mu(G) \leq 3\) in general, and this is tight for uncountably infinite graphs by a construction of Shelah--Milner~\cite{ShelahMilner1990} (1990), but the conjecture remains open for countably infinite graphs.

The \emph{majority coloring game}, introduced by Bosek--Grytczuk--Jak\'obczak~\cite{BosekGrytczukEtAl2019} (2019), is a two-player Maker-Breaker-type game where the players alternately color vertices while maintaining the majority condition at each vertex; the least number of colors required for the first player to have a winning strategy on \(G\) is the \emph{majority game chromatic number} $\mu_g(G)$.
In contrast with the static case, Bosek--Grytczuk--Jak\'obczak show that $\mu_g(G)$ is unbounded in general, while $\mu_g(G)\le \mathrm{col}_g(G)$, where \(\mathrm{col}_g(G)\) is the game coloring number of \(G\).

It is known (cf.~Faigle--Kern--Kierstead--Trotter~\cite{FaigleKernEtAl1993} (1993)) that, for any acyclic graph \(G\), \(\mathrm{col}_g(G) \leq 4\), so \(\mu_g(G) \leq 4\) as well.
We improve this bound by showing that \(\mu_g(G) \leq 3\) for any acyclic graph \(G\) of maximum degree at most $4$.
We also show that \(\mu_g(G) \leq 2\) if \(G\) is a path, a star, or a complete graph.
These improve the results of Bosek--Grytczuk--Jak\'obczak.
We also initiate the study of the computational complexity of the majority coloring game.
We show that the pre-coloring extension problem for majority coloring on \(G\) with a palette of \(\chi(G)\) colors is NP-complete, and its game version is PSPACE-complete.
Furthermore, the problem remains NP-complete, and its game version remains PSPACE-complete even on a palette of \(2\) colors.
\end{abstract}

\section{Introduction}\label{S:Introduction}
All graphs considered are simple.
Let $G=(V,E)$ be a graph and \(C\) be a set of colors.
A coloring $c:V\to C$ is a \emph{majority coloring} if, for every vertex $v$, we have
\[
|\{u\in N(v): c(u)=c(v)\}|\le \left\lfloor \frac{d(v)}{2}\right\rfloor,
\]
that is, at most half the neighbors of \(v\) have the same color as \(v\).
The minimum size of such a color set is the \emph{majority chromatic number} $\mu(G)$. Lov\'asz~\cite{Lovasz1966} gave an elegant proof of the fact that $\mu(G) = 2$ for every finite nonempty graph $G$; in fact, Lov\'asz shows the more general result that every graph has a \(k\)-coloring such that every vertex \(v\) has at most \(\frac{1}{k} \deg(v)\) neighbors of the same color (see also \cite{BorodinKostochka1977,Lawrence1978,Bernardi1987}).
A majority \(2\)-coloring is also called an \emph{unfriendly partition} of the graph, and the Unfriendly Partition Conjecture attributed to  Cowen--Emerson~\cite{ CowenEmerson1985} (see~\cite{AharoniMilnerEtAl1990,ShelahMilner1990}) states that \(\mu(G) = 2\) for any nonempty infinite graph as well.
A compactness argument shows that this holds for any infinite graph which is locally finite, and Aharoni--Milner--Prikry~\cite{AharoniMilnerEtAl1990} showed that it also holds if \(G\) is an infinite graph with finitely many vertices of infinite degree.
On the other hand, Shelah--Milner~\cite{ShelahMilner1990} showed that \(\mu(G) \leq 3\) for any graph \(G\), and they also constructed an uncountably infinite graph for which \(\mu(G) = 3\).
However, the Unfriendly Partition Conjecture remains open in general for countably infinite graphs: see~\cite{BruhnDiestelEtAl2010,Berger2017,KalinowskiPilsniakEtAl2025} for partial results on several graph classes.

Several variations on majority colorings beyond the basic vertex-coloring setting have been considered in the literature.
Kreutzer--Oum--Seymour--van der Zypen--Wood~\cite{KreutzerOumEtAl2017} showed that every digraph has a \(4\)-coloring such that for every vertex \(v\), at most half the \emph{out-neighbors} of \(v\) have the same color as \(v\), and they conjectured that every digraph is in fact majority \(3\)-colorable.
Majority edge-colorings were defined by Bock--Kalinowski--Pardey--Pils\'niak--Rautenbach--Wo\'zniak~\cite{BockKalinowskiEtAl2023}, and they showed that if \(\delta(G) \geq 2\) then \(\mu'(G) \leq 4\), and if \(\delta(G) \geq 4\), then \(\mu'(G) \leq 3\).
Majority list colorings were studied by Anholcer--Bosek--Grytczuk~\cite{AnholcerBosekEtAl2017} for vertex-colorings, and by P\k{e}ka{\l}a--Przyby{\l}o~\cite{PekalaPrzybylo2025} for edge-colorings.
Anholcer--Bosek--Grytczuk--Gutowski--Przyby{\l}o--Zaj\k{a}c~\cite{AnholcerBosekEtAl2025} studied majority on-line list colorings and further generalizations for both undirected and directed graphs.

Our focus in this paper is on a game-theoretic variant, the \emph{majority coloring game}, which was introduced by Bosek--Grytczuk--Jak\'obczak~\cite{BosekGrytczukEtAl2019}.
Two players, Alice and Bob, alternately color vertices from a fixed palette, maintaining a valid partial majority coloring throughout. Alice wins if all vertices are eventually colored; otherwise Bob wins. The least number of colors on which Alice has a winning strategy is the \emph{majority game chromatic number}, denoted $\mu_g(G)$.

In contrast to the static parameter $\mu(G)$, the game parameter $\mu_g(G)$ exhibits significantly richer behavior.
It is unbounded on the class of graphs \(G\) such that \(\chi(G) = 2\) (cf.~\cite[Theorem 1]{BosekGrytczukEtAl2019}), as well as on the class of graphs \(G\) with \(\mathrm{col}(G) = 3\) (cf.~\cite[Theorem 5]{BosekGrytczukEtAl2019}), where \(\mathrm{col}(G)\) is the \emph{coloring number} of \(G\) and is equal to one more than the degeneracy number of \(G\).

On the other hand, $\mu_g(G)$ is bounded above by the \emph{game coloring number} of \(G\), $\mathrm{col}_g(G)$, which is defined as follows.
The \emph{marking game} on a graph \(G\) of order \(n\) is a two-player game where the players alternately choose vertices of \(G\), thereby inducing a linear order on \(V(G)\), say \(v_1 \prec v_2 \prec \dotsb \prec v_n\).
The \emph{back degree} of a vertex \(v_i\) is the number of vertices adjacent to \(v_i\) and preceding it in this linear order.
The goal of the first player, Alice, is to minimize the largest back degree in the linear order whereas the goal of the second player, Bob, is to maximize it.
The game coloring number \(\mathrm{col}_g(G)\) is the least \(k\) such that Alice has a strategy for the marking game on \(G\) such that the largest back degree is less than \(k\). 
If Alice follows the marking game strategy in the majority coloring game, then it is clear that fewer than \(\mathrm{col}_g(G)\) colors are forbidden for the vertex that she chooses in each round, so \(\mu_g(G) \leq \mathrm{col}_g(G)\) (cf.~\cite[Theorem 2]{BosekGrytczukEtAl2019}). 

In particular, Faigle--Kern--Kierstead--Trotter~\cite{FaigleKernEtAl1993} showed that $\mathrm{col}_g(G)\le 4$ for any forest \(G\).
Thus, \(\mu_g(G) \leq 4\) for any forest \(G\), and Bosek--Grytczuk--Jak\'obczak~\cite{BosekGrytczukEtAl2019} gave a strategy for Alice to show that in fact $\mu_g(T)\le 3$ for any complete binary tree \(T\).
The present work continues this line of investigation.
In particular, we establish the following:
\begin{theorem}\label{T:Main1}
    If \(G\) is a forest with \(\Delta(G) \leq 4\), then $\mu_g(G)\le 3$.
\end{theorem}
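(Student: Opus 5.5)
The plan is to refine the Faigle--Kern--Kierstead--Trotter activation strategy for the marking game on forests. It guarantees that every vertex has at most $3$ colored neighbours at the moment it is colored. Alice will also choose colors carefully, so that the extra color beyond $\mathrm{col}_g(G)-1=3$ forbidden ones is never needed.

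First I would record what blocks a color when $\Delta(G)\le 4$. A color $c$ is unavailable at an uncolored vertex $v$ only if one of two things holds. Either $v$ already has more than $\lfloor d(v)/2\rfloor$ neighbours colored $c$; this needs at least $2$ such neighbours, and at least $3$ when $d(v)=4$. Or some colored neighbour $u$ of $v$ with $c(u)=c$ is \emph{saturated}, meaning it already has $\lfloor d(u)/2\rfloor$ neighbours of its own color. Note that $\lfloor d(u)/2\rfloor$ is $0$ for leaves, $1$ for degrees $2,3$ and $2$ for degree $4$. Each colored neighbour contributes to blocking at most its own color. Hence, if $v$ has at most $3$ colored neighbours, $v$ can be completely blocked only in one situation: it has exactly three colored neighbours carrying three distinct colors, each blocking its color. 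When $d(v)=4$, the own-majority condition cannot block anything in that situation, so all three neighbours must be saturated. Call an uncolored vertex \emph{endangered} if two of the three colors are already blocked at it.

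Alice's strategy has two layers. The first layer is Faigle et al.'s trunk/activation strategy, which gives the back-degree bound of $3$ throughout the game, including for vertices Bob colors. The second layer has two parts.

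\begin{enumerate}
\item Whenever Bob's move creates an endangered vertex, Alice colors such a vertex immediately, overriding the marking strategy. The fact that $\mathrm{col}_g$-type strategies tolerate the occasional out-of-order move needs to be checked. I would handle it by treating the out-of-order move as a Bob-like move inside the marking game, using the known robustness of the activation strategy to extra marked vertices.
\item When Alice colors a vertex, she picks a color that avoids creating new saturated vertices where possible. For instance, she reuses the color of a neighbour only if that neighbour is not thereby saturated, and otherwise takes a color absent from the neighbourhood. The aim is to maintain the invariant that every uncolored vertex has at most one blocked color after each of Alice's turns.
\end{enumerate}

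The key lemma is then that a single move by Bob, coloring some $w$, can create at most one endangered vertex that Alice cannot defuse. A single move can raise the number of blocked colors by at most one at each uncolored neighbour $x$ of $w$, either by blocking $c(w)$ at $x$ or by saturating $w$. It can also do so at uncolored neighbours of colored neighbours $u$ of $w$ that become saturated. Degree at most $4$ keeps this set small, and acyclicity means these candidate vertices share no further colored neighbours.

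I expect the main obstacle to be exactly this double-threat case. Bob colors $w$ so that $w$ saturates a neighbour $u$ while simultaneously blocking $c(w)$ at another uncolored neighbour, producing two endangered vertices in different branches. My first attempt would be to strengthen the invariant using the trunk structure. Each uncolored vertex adjacent to a colored vertex lies in an activated subtree in which Alice has already colored the attachment point. I would then show by case analysis on the degrees ($1$ to $4$) of $u$ and $w$ that Alice's earlier color choice at the attachment point prevents one of the two threats from being a genuine block. This would use the facts that leaves have saturation threshold $0$ and that degree-$4$ vertices need two same-colored neighbours to saturate. If this case analysis closes, an induction on the number of moves gives that Alice always has a legal move and every vertex eventually gets colored, proving Theorem~\ref{T:Main1}.
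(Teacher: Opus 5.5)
\textbf{There is a genuine gap, at exactly the step you flag as the main obstacle.} The key lemma is never proved, and the invariant you propose cannot carry it.

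When Bob colors $w$, the color $c(w)$ may become blocked at every uncolored neighbour of $w$. It may also become blocked, for each neighbour $u$ that $w$ saturates, at every uncolored neighbour of $u$. Knowing only that each uncolored vertex had at most one blocked color beforehand, nothing stops several of these vertices from becoming endangered at once, and Alice can defuse only one per turn. The appeal to Alice's earlier color choice at an attachment point is a hope, not an argument.

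The first layer is also unsupported. Every override means Alice forgoes her marking response, so Bob in effect makes consecutive marking moves. No standard robustness property of the Faigle--Kern--Kierstead--Trotter activation strategy guarantees back degree at most $3$ under such deviations, so this would itself need a proof.

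\textbf{The missing idea} is to control saturated (i.e.\ majority colored) vertices whose parent is uncolored, rather than endangered vertices. No marking-game machinery is needed; this is the paper's route.
\begin{enumerate}
\item \emph{Strategy.} Root the tree at Alice's first vertex. After Bob colors $u$, Alice colors $p(u)$, or $p^2(u)$ if $p(u)$ has just become saturated. If the new vertex's parent is colored, she uses a different color from it; otherwise she uses a color that does not saturate the new vertex. If that target is already colored, she colors some uncolored $x$ with $p(x)$ colored, using a color different from that of $p(x)$.
\item \emph{Invariant.} Alice's moves never saturate a vertex with an uncolored parent. A move of Bob creates at most one such vertex, since only $u$ or $p(u)$ can qualify, and Alice's reply removes it. Hence no uncolored vertex ever has two saturated children, so it has at most two saturated neighbours.
\item \emph{Counting.} Blocking all three colors at $v$ then needs at least $\lfloor d(v)/2\rfloor+3$ neighbours: two saturated ones plus $\lfloor d(v)/2\rfloor+1$ of the third color. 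This exceeds $d(v)$ whenever $d(v)\le 4$, which is where the degree bound enters. A similar count shows that Alice's prescribed colors are always available.
\end{enumerate}

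Under this invariant, endangered vertices in your sense can occur but can never be completed. This disposes of your double-threat scenario. Apart from $p(u)$, which Alice colors at once, the threatened vertices are children of $w$ or of $u$, and none of them has a saturated child.
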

We also improve the upper bound for certain classes of graphs.
\begin{theorem}\label{T:Main2}
    If \(G\) is a path, a star, a complete graph, or a disjoint union of some copies of these graphs, then \(\mu_g(G) \leq 2\).
\end{theorem}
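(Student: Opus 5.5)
The plan is to treat each of the three graph types separately, isolating for each a simple invariant that Alice can maintain and that guarantees every uncolored vertex keeps at least one legal color. For disjoint unions we then combine these invariants, using the fact that a move in one component does not affect the others. Throughout, the palette is \(\{a,b\}\). Coloring \(v\) with \(a\) is legal if and only if (i) \(v\) currently has fewer than \(\lfloor d(v)/2\rfloor+1\) neighbors colored \(a\), and (ii) no neighbor colored \(a\) is already at its own limit.

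\emph{Complete graphs.} Here each vertex may have at most \(\lfloor (n-1)/2\rfloor\) neighbors of its own color. So a partial coloring is legal if and only if each color class has size at most \(s:=\lfloor (n-1)/2\rfloor+1=\lfloor (n+1)/2\rfloor\). Since \(2s\ge n\), while some vertex is uncolored the smaller class has size less than \(s\), and adding a vertex to it is legal. Hence every legal partial coloring extends, and Alice wins no matter how either player moves.

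\emph{Stars.} Let \(c\) be the center. A leaf with its neighbor colored \(a\) can always take \(b\), since \(c\)'s constraint concerns only its own color. Now suppose \(c\) is colored \(a\). A leaf cannot take \(a\), because a leaf has degree \(1\) and so may have no neighbor of its own color. So all leaves end up \(b\), which is legal for \(c\). The only danger is \(c\) remaining uncolored while leaves of both colors appear. To avoid this, Alice colors \(c\) as soon as she can: either on her first move, or, if \(c\) lies in a component where Bob moves first, immediately after Bob's first move there. At that point at most one leaf is colored, so \(c\) takes the opposite color. After that the component is safe.

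\emph{Paths.} Write \(v_1,\dots,v_n\) and pair \(\{v_1,v_2\},\{v_3,v_4\},\dots\); if \(n\) is odd, \(v_n\) is unpaired. The key lemma is that any partial coloring in which every fully colored pair is bichromatic is majority and extends. Indeed, each vertex has at most one same-colored neighbor, which must lie outside its pair. An endpoint's only neighbor is its partner, except for the unpaired \(v_n\), which simply takes the color opposite to \(v_{n-1}\). Alice's strategy is as follows.
\begin{itemize}
\item Whenever Bob colors a vertex whose partner is uncolored, Alice colors the partner with the opposite color; this is legal by the lemma.
\item Otherwise Alice makes an ``initiative'' move: she colors an unpaired vertex (e.g.\ \(v_n\), or a star center, or any clique vertex), or she starts a new pair.
\end{itemize}

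\emph{Main obstacle.} The difficulty is the second alternative. When Alice starts a pair \(\{p,q\}\) by coloring \(p\), Bob may answer by coloring \(q\) with the same color, which would break the invariant. I plan to prevent this by always starting at a pair where the same-color completion is illegal. One option is an end pair \(\{v_1,v_2\}\), colored at \(v_1\): then \(q=v_2\) cannot match, since \(\deg v_1=1\). The other option is a pair \(\{p,q\}\) with \(p\) adjacent to an already colored vertex \(r\) in a neighboring pair. Alice colors \(p\) with the same color as \(r\). This is legal, because \(r\) has an opposite-colored partner and so is below its limit, and \(p\) then has exactly one same-colored neighbor. Now \(p\) is saturated, so \(q\) is forced to take the opposite color. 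In a path that is not completely colored, one of these two options, or an unpaired vertex, is always available. With these choices the invariant is preserved in every component, and the lemmas above show that Alice can always complete the coloring. The step I would check most carefully is that the initiative move is always available and legal in the disjoint-union setting.
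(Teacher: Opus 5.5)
\textbf{Verdict: there is a genuine gap, and it is in the odd-path case.}

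\textbf{What works.} Your arguments for complete graphs and stars are sound. The observation that every legal partial coloring of \(K_n\) extends is cleaner than the paper's explicit strategy. Your bookkeeping for disjoint unions is also more explicit than the paper's one-line claim. For paths with an even number of vertices, your pairing invariant works once you also let Alice complete a half-colored pair with the opposite color. That move is always legal and preserves the invariant. Without it, your claim that an initiative move is ``always available'' fails as soon as every uncolored vertex of the path lies in a pair Alice herself has opened.

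\textbf{Where it breaks.} When \(n\) is odd, your unpaired vertex \(v_n\) is a leaf, and your key lemma is false. The leaf forces \(c(v_{n-1})\neq c(v_n)\), and this can contradict keeping \(\{v_{n-2},v_{n-1}\}\) bichromatic. Three concrete failures:
\begin{itemize}
\item In \(P_5\), the position \(v_5=a\), \(v_3=b\) has no fully colored pair, yet \(v_4\) can only receive \(b\). So if Alice opens at \(v_5\) and Bob replies \(v_3=b\), your prescribed response \(v_4=a\) is illegal.
\item If instead Alice opens at \(v_1=a\) and Bob replies \(v_5=a\), none of your initiative moves exists. The vertex \(v_3\) has no colored outside neighbor, and matching the leaf \(v_5\) at \(v_4\) is illegal. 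The natural move \(v_2=b\) then loses to \(v_3=b\), after which \(v_4\) has no legal color. The only winning move, \(v_3=a\), is not one your strategy ever makes.
\item On \(P_3\) viewed as a path, both permitted openings (\(v_1\) or \(v_3\)) lose at once to Bob coloring the other endpoint with the opposite color.
\end{itemize}
\(P_3\) is rescued by your star argument, but nothing in your proposal covers \(P_5,P_7,\dots\).

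\textbf{Why the obvious repair is not enough.} You could pair both leaves with their neighbors and leave an internal vertex \(u\) unpaired. This does repair the static extension lemma, but the partner-response rule still fails. Take \(P_7\) with \(u=v_3\) and the play: Alice \(v_3=a\), Bob \(v_6=b\), Alice \(v_7=a\), Bob \(v_2=a\), Alice \(v_1=b\), Bob \(v_5=b\). Now \(v_4\) has no legal color, because \(v_3\) and \(v_5\) each already have a same-colored neighbor. Alice should have answered \(v_2=a\) at \(v_4\), since the color of the leaf \(v_1\) was already forced. So the response rule must account for the threats created when \(u\) acquires a same-colored neighbor.

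\textbf{How the paper handles paths.} The paper avoids fixed pairs altogether. Alice opens at a leaf-neighbor and roots the path there. She then answers each of Bob's moves at the neighbor of his vertex on the root side, or at the other neighbor if that one is already colored, always with the opposite color. The paper then excludes an uncolored vertex flanked by two saturated neighbors of different colors by tracing how those neighbors were colored.
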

The above results also hold for the variation where the first move is played by Bob.

We also prove several complexity results for pre-coloring extensions of majority coloring and its game version.
\begin{theorem}\label{T:Main3}
    The pre-coloring extension problem for majority coloring is NP-complete, and the game version of the same problem is PSPACE-complete.
\end{theorem}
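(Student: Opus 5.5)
Membership is routine for both problems. A partial coloring is a certificate for the static problem, and checking the majority condition $|\{u\in N(v):c(u)=c(v)\}|\le\lfloor d(v)/2\rfloor$ at every vertex takes polynomial time, so the problem is in NP. The game lasts at most $|V(G)|$ rounds, and each position is a partial coloring of polynomial size. A depth-first evaluation of the game tree therefore uses polynomial space, which puts the game version in PSPACE.

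For NP-hardness of the pre-coloring extension problem, I will reduce from proper $3$-coloring. Take an instance $H$ of $3$-colorability and build $G$ as follows. For each edge $uv$ of $H$, insert a gadget that forces $u$ and $v$ to receive different colors under any majority extension. The plan is to attach to each vertex pre-colored pendant leaves, or pre-colored neighbors with their own pendants, so that the "free budget" at every original vertex $v$ is exactly the number of its $H$-neighbors. Concretely, $v$ gets $d_H(v)$ neighbors pre-colored in each of the three colors. Then $d_G(v)=4d_H(v)$, so $v$ may share its color with at most $2d_H(v)$ neighbors. Already $d_H(v)$ pre-colored neighbors share its color, which leaves a tolerance of exactly $d_H(v)$. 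That is too loose, so instead I will pad with pre-colored neighbors in the two "other" colors only, arranged so that the tolerance at $v$ is $0$ for $H$-neighbors. Pre-colored pads are themselves already colored, so their own constraint must also hold. I will make each pad a leaf, or give it its own pre-colored neighbors of a different color, so that it is automatically satisfied.

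The precise counting is the main obstacle. A vertex $v$ with $a$ free $H$-neighbors and $p_i$ pre-colored neighbors of color $i$ must satisfy $p_{c(v)}+\#\{\text{same-colored }H\text{-neighbors}\}\le\lfloor (a+\sum p_i)/2\rfloor$. Choosing $p_i = a+p_{c(v)}$-type balances is impossible uniformly across colors. I would therefore first try a cleaner source problem: NAE-3SAT or a reduction to the $2$-color case, which the abstract says also holds. With two colors, a vertex with $k$ free neighbors plus $k$ pre-colored neighbors of color $1$ and $k$ of color $2$ can tolerate $\lfloor 3k/2\rfloor-k$ same-colored free neighbors. Tuning the pad sizes so that the tolerance is exactly $k-1$ turns "not all free neighbors equal to me" into a clause constraint, and this gives a NAE-SAT-style reduction. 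This yields NP-completeness for $2$ colors and, after adding universal colored pads, for $\chi(G)$ colors.

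For PSPACE-hardness I will lift the same gadgets to a game reduction from a standard PSPACE-complete game such as the positional formula game (Schaefer's $G_{\text{pos}}(\text{POS CNF})$) or a quantified NAE formula game. Each variable becomes a free vertex that must be colored in a prescribed order. Order is enforced by the standard trick of padding with extra vertices whose coloring is forced and irrelevant, or by making out-of-turn moves immediately losing. The clause gadgets from the NP reduction then ensure that Alice can color everything exactly when the corresponding quantified formula is true. I expect this move-order enforcement to be the delicate part, alongside the arithmetic in the gadgets.
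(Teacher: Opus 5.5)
Your membership arguments are fine, but the NP-hardness part is missing the key gadget, and the PSPACE part is not yet a proof. You try to enforce ``$u$ and $v$ get different colors'' through the majority condition at the \emph{uncolored} vertex $v$ itself. As you noticed, the arithmetic cannot work. With $a\ge 2$ free neighbors and pads $p_1,p_2,p_3$, zero tolerance for every color would need $p_i=\lfloor (a+p_1+p_2+p_3)/2\rfloor$ for all $i$, which is impossible. The same obstruction blocks your two-color tuning at a free vertex: equal tolerance $k-1$ for both colors forces $p_1=p_2$ and $\lfloor k/2\rfloor=k-1$, i.e.\ $k\le 2$.

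The idea you are missing is to put the constraint on pre-colored vertices by subdividing the edges. For every edge $uv$ of the $3$-coloring instance, join $u'$ and $v'$ by $k$ paths $u'w^i_{uv}v'$, with $w^i_{uv}$ pre-colored $i$. Then $v'$ has exactly $d(v)$ neighbors of each color, so it is never violated, since $d(v)\le\lfloor k\,d(v)/2\rfloor$. Meanwhile the degree-$2$ vertex $w^i_{uv}$ is violated exactly when $u'$ and $v'$ both receive color $i$, which is precisely properness of $uv$.

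At a pre-colored vertex your tuning does work: a blue vertex with $k$ free neighbors and $k-1$ red pendant pads forbids exactly ``all $k$ neighbors blue''. But that is a one-sided SAT clause, not an NAE constraint. A two-color reduction therefore still needs either a red twin for every clause (giving monotone NAE-$3$SAT), or a complement gadget for $x_i,\bar{x_i}$ plus balancing pads at the literal vertices. None of this is in your sketch. Note also that the statement concerns the problem with $k$ part of the input, so the two-color and $\chi(G)$ refinements are not needed.

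The PSPACE half is the real gap. You give no construction, and the move-order enforcement you postpone is the entire difficulty. In the majority game either player may color any uncolored vertex with any legal color. The game ends in Alice's favor only when everything is colored, so no deviation by Bob can be ``immediately losing''. And padding with free irrelevant vertices gives both players pass moves, which is exactly what destroys a prescribed order.

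The paper needs none of this. With the subdivision gadget all of $W$ is pre-colored, so the uncolored vertices of $H$ are exactly the copies $v'$. A move $(v',j)$ is then legal in the majority game on $H$ if and only if $(v,j)$ is legal in the ordinary $k$-coloring game on $G$: the only possible violation is at $w^j_{uv}$ when $u'$ is already colored $j$. The two games are therefore the same game, and PSPACE-hardness follows immediately from the PSPACE-completeness of the coloring game due to Costa, Pessoa, Sampaio and Soares.

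Even for Schaefer's POS-CNF game, where no order is needed, a reduction must do two things. It must interpret every possible move of both players, and it must show that a falsified clause really leaves some vertex with no legal color. In the paper's two-color version this is done by the pair $x_i,\bar{x_i}$ with pre-colored $r_i,b_i$: Bob's ``false'' is coloring $\bar{x_i}$ red, which forbids red at $x_i$ via $r_i$. Your sketch does neither.
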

We also show that the problem remains NP-complete, and its game version remains PSPACE-complete, even on a palette of \(2\) colors.
To the best of our knowledge, these are the first results on the computational complexity of this parameter, adding to a growing body of results on the complexity of coloring games (see \cite{Bodlaender1992,MarcilonMartinsEtal2020,FraenkelGoldschmidt1987,RahmanWatson2023,SalesMarcilonEtal2026}).

\subsection{Outline of the paper}\label{SS:Outline}

We begin by examining the strategy for Alice as outlined in \cite{BosekGrytczukEtAl2019}, and observe that their strategy can fail on perfect binary trees of sufficiently large depth.
We identify the specific point of failure, and our fix will extend Alice's strategy to all forests of maximum degree at most \(4\).
Our approach is configuration-based: we identify a small family of local obstruction patterns and proceed to construct the configurations that can force a fourth color on a vertex, and show that Alice can play so as to prevent their occurrences.
In the final two sections, we establish the complexity results (\Cref{extmajcolorthree} -- \Cref{thm:majtwocoloringgame}).

\section{Counterexample to the strategy proposed in \cite{BosekGrytczukEtAl2019} and some consequences}\label{S:Counterexample}

\begin{figure}[h]     
    \begin{tikzpicture}
        [every node/.style={draw, circle, inner sep=2pt},
        level distance=6mm,
        font=\scriptsize,
        level 1/.style={sibling distance=40mm},
        level 2/.style={sibling distance=50mm},
        level 3/.style={sibling distance=40mm},
        level 4/.style={sibling distance=60mm},
        level 5/.style={sibling distance=60mm},
        level 6/.style={sibling distance=32mm},
        level 7/.style={sibling distance=16mm},
        level 8/.style={sibling distance=8mm},
        line width=0.9pt]

        \node [fill=green!30, xshift=-30mm] {1}
        child { 
        node {...}
        child { node [xshift=-30mm] {}
        child { node [fill=red!40] {15}}
        child { node [fill=red!40] {14}
        child { 
        node [fill=red!40] {16}
        child { node [xshift=2cm] {...}}
        child { node [xshift=-2cm] {...}}
        }
        child{
        node {\(v_c\)}
        child { node [fill=blue!30] {12}
        child {node [fill=blue!30] {5}
            child {node [fill=red!40] {2}
                child {node {}}
                child {node {}}}
            child {node [fill=red!40] {3}
                child {node {}}
                child {node {}}}
            }
        child {node [fill=red!40] {4}
            child {node {}
                child {node {}}
                child {node {}}}
            child {node {}
                child {node {}}
                child {node {}}}
            }
        }
        child { node [fill=green!40] {13}
        child {node [fill=green!40] {9}
            child {node [fill=blue!30] {6}
                child {node {}}
                child {node {}}}
            child {node [fill=blue!30] {7}
                child {node {}}
                child {node {}}}
            }
        child {node [fill=blue!30] {8}
            child {node [fill=blue!30] {10}
                child {node {}}
                child {node {}}}
            child {node [fill=red!40] {11}
                child {node {}}
                child {node {}}}
            }
        }
        }}
        }
        }
        child { node {...}}
        ;
    \end{tikzpicture}
    \caption{The vertex \(v_c\) ends up with no available color in this game by following Alice's strategy as outlined in \cite{BosekGrytczukEtAl2019}.}
    \label{F:counter_example_Bosek}
\end{figure}

In \cite{BosekGrytczukEtAl2019}, the authors propose the following strategy for Alice on a complete binary tree with a palette of \(3\) colors:
\begin{enumerate}
	\item Alice starts by coloring the root.
	\item If Bob colors a vertex, Alice colors its sibling:
	\begin{enumerate}
		\item with the same color if possible, else
		\item with any other available color.
	\end{enumerate}
\end{enumerate}
In particular, they note that if the vertex \(v\) is ``special'', i.e., it is a colored vertex whose sibling and both children are all colored with the same color, then the parent of \(v\) (if it is uncolored) can be colored with the same color as \(v\).

Now, consider a perfect binary tree \(T\) of depth \(\geq 8\) as shown in Figure~\ref{F:counter_example_Bosek}.
The vertices are labeled by the move order, so Alice colors the odd numbered vertices, and Bob colors the even numbered vertices.
Though Alice follows the above strategy in~\cite{BosekGrytczukEtAl2019}, the vertex \(v_c\) has no legal color available after 16 moves, since each of its three neighbors are majority colored with distinct colors, and therefore Bob wins this game on \(T\).
Here and for the rest of paper, for the sake of brevity, we say that a vertex is \emph{majority colored} with a color \(i\), if the corresponding vertex \(v\) is colored with the color \(i\). and exactly \(\floor{\deg(v)/2}\) neighbors of \(v\) are also colored with the same color \(i\).
In particular, Alice's decision to color the vertex \(13\) with the same color as the ``special'' vertex \(9\) causes the vertex \(13\) to be majority colored.
Since its parent, \(v_c\), is uncolored, this move by Alice is helpful to Bob, and indeed Bob manages to exploit it to prevent any legal coloring of \(v_c\).

Thus a winning strategy for Alice must prevent the formation of any such \emph{critical vertices} \(v_c\), i.e., any uncolored vertex for which all available colors violate the majority condition.
Therefore, our strategy for Alice shall immediately neutralize any newly created majority colored vertices by coloring their parents whenever possible; in particular, she must avoid creating any majority colored vertices on her move.

We start with some simple observations about majority colorings and violations of the majority condition in the majority coloring game.
\begin{observation}\label{obs_checkx}
	\hfill
	\begin{enumerate}
		\item Any colored leaf is majority colored.
		\item If coloring a vertex \(v\) on a move violates the majority condition, then any violations occur only at vertices in \(N[v]\), the closed neighborhood of \(v\).
		\item If coloring a vertex \(v\) with the color \(i\) on a move violates the majority condition at \(v\) itself, then instead coloring \(v\) with a color \(j \neq i\) does not violate the majority condition at \(v\) on that move (though it may violate the majority condition at another vertex in \(N(v)\), the open neighborhood of \(v\)).
	\end{enumerate}
\end{observation}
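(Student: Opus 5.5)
The three items of \Cref{obs_checkx} follow directly from the definition of a majority coloring, so I will argue each one separately.

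For item 1, let \(v\) be a colored leaf. Then \(\deg(v)=1\) and \(\floor{\deg(v)/2}=0\). Validity of the partial coloring forces \(v\) to have no same-colored neighbor, so exactly \(0=\floor{\deg(v)/2}\) neighbors share its color, which is what ``majority colored'' means. (If the neighbor of \(v\) is still uncolored, the count of same-colored neighbors is again \(0\).) So every colored leaf is majority colored: it has already used its full allowance, and its neighbor can never receive its color.

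For item 2, recall that the majority condition at a vertex \(w\) depends only on \(c(w)\) and on the colors of \(N(w)\). Coloring \(v\) changes only the color of \(v\). So the count \(|\{u\in N(w): c(u)=c(w)\}|\) can change only if \(w=v\), or if \(v\in N(w)\), that is, \(w\in N(v)\). For every other vertex the count is unchanged, and since the partial coloring was valid before the move, no violation can appear there. Hence any violation lies in \(N[v]\).

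For item 3, suppose coloring \(v\) with \(i\) violates the condition at \(v\). Then more than \(\floor{\deg(v)/2}\) neighbors of \(v\) are already colored \(i\). The neighbors colored \(j\neq i\) are disjoint from these, so there are at most
\[
\deg(v)-\left(\floor{\deg(v)/2}+1\right)=\lceil \deg(v)/2\rceil-1\le \floor{\deg(v)/2}
\]
of them. Therefore coloring \(v\) with \(j\) respects the condition at \(v\). By item 2, any remaining violation can occur only at neighbors of \(v\).

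There is no real obstacle here. The only delicate point is the floor/ceiling inequality in item 3, which holds for both parities of \(\deg(v)\).
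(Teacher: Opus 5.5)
Your proof is correct. It checks each item directly from the definitions of the majority condition and of being majority colored, which is exactly the reasoning the paper leaves implicit, since it states the observation without proof. In item 3, your counting bound $\lceil \deg(v)/2\rceil - 1 \le \floor{\deg(v)/2}$ is the right one and holds for both parities.
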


By the above observations, a critical vertex \(v_c\) arises on a tree in the majority \(3\)-coloring game if and only if at least one of the following configurations holds:
\begin{enumerate}
	\item \(v_c\) has three majority colored neighbors with three pairwise distinct colors (see Figure~\ref{configI});
	\item \(v_c\) has two majority colored neighbors with two distinct colors, and \(m+1\) neighbors colored with a third color, where \(m = \floor{\deg(v_c)/2}\) (see Figure~\ref{configII}).
\end{enumerate}

    \begin{figure}[h!]
    \begin{subfigure}{0.45\textwidth}
        \begin{tikzpicture}
            [level distance=8mm,
            font=\scriptsize,
            every node/.style={fill=gray!25,circle, inner sep=2pt},
            level 1/.style={sibling distance=12mm}]

            \node {$v_c$}
            child{node{\majcol{1}}}
            child{node{\majcol{2}}}
            child{node{\majcol{3}}};
        \end{tikzpicture}
        \caption{\configuration{1}}
        \label{configI}
    \end{subfigure}
    \hfill 
    \begin{subfigure}{0.45\textwidth}
    \begin{tikzpicture}
        [every node/.style={fill=gray!25,circle, inner sep=2pt},
        level distance=8mm,
        font=\scriptsize,
        sibling distance=12mm]

        \node (root) {$v_c$}
        child { node (c1) {\majcol{1}} }
        child { node (c2) {\majcol{2}} }
        child { node (c3) {3}}
        child { node (c4) [plain]{$\cdots$} }
        child { node (c5) {3} };

        \draw [decorate,decoration={brace,mirror,amplitude=6pt}]
        (c3.south west) -- (c5.south east)
        node [plain] [midway,below=6pt] {$m+1$ $\text{child}(v_c)_i$};
        \end{tikzpicture}
        \caption{\configuration{2}}
        \label{configII}
    \end{subfigure}
    \caption{Critical configurations to be avoided by Alice.}
    \label{crit_config}
    \end{figure}
    In Figure~\ref{crit_config} and the rest of the paper, by \(\textbf{\majcol{i}}\) we mean that the corresponding vertex \(v\) is majority colored with the color \(i\).

For rooted trees, the configurations can be further divided according to which vertex is the parent of $v_c$, as shown in Figure~\ref{crit_config_trees}.

\begin{figure}[h!]
    \begin{subfigure}[b]{0.3\textwidth}
        \begin{tikzpicture}
            [level distance=8mm,
            font=\scriptsize,
            every node/.style={fill=gray!25,circle, inner sep=2pt},
            level 2/.style={sibling distance=20mm},
            level 3/.style={sibling distance=10mm}]

            \node {\majcol{1}}
            child{node [xshift=-10mm] {$v_c$}
            child{node {\majcol{2}}}
            child{node {\majcol{3}}}
            };
        \end{tikzpicture}
        \caption{\configuration{1a}}
        \label{configIa}
    \end{subfigure}
    \begin{subfigure}[b]{0.3\textwidth}
        \begin{tikzpicture}
            [level distance=7mm,
            font=\scriptsize,
            every node/.style={fill=gray!25,circle, inner sep=2pt},
            level 1/.style={sibling distance=10mm}]

            \node {$v_c$}
            child{node{\majcol{1}}}
            child{node{\majcol{2}}}
            child{node{\majcol{3}}};
        \end{tikzpicture}
        \caption{\configuration{1b}}
        \label{configIb}
    \end{subfigure}
    \begin{subfigure}[b]{0.3\textwidth}
    \begin{tikzpicture}
        [every node/.style={fill=gray!25,circle, inner sep=2pt},
        level distance=7mm,
        font=\scriptsize,
        sibling distance=12mm]

        \node (root) {$v_c$}
        child { node (c1) {\majcol{1}} }
        child { node (c2) {\majcol{2}} }
        child { node (c3) {3}}
        child { node (c4) [plain]{$\cdots$} }
        child { node (c5) {3} };

        \draw [decorate,decoration={brace,mirror,amplitude=6pt}]
        (c3.south west) -- (c5.south east)
        node [plain] [midway,below=6pt] {$m+1$ $\text{child}(v_c)_i$};
        \end{tikzpicture}
        \caption{\configuration{2a}}
        \label{configIIa}
    \end{subfigure}
%
%
    \begin{subfigure}[b]{0.34\textwidth}
    \begin{tikzpicture}
        [every node/.style={fill=gray!25,circle, inner sep=2pt},
        level distance=7mm,
        font=\scriptsize,
        sibling distance=12mm]

        \node {\majcol{1}}
        child { node [xshift=-12mm] {$v_c$}
        child { node (c1) {\majcol{2}} }
        child { node (c2) {3}}
        child { node (c3) [plain]{$\cdots$} }
        child { node (c4) {3} }
        };

        \draw [decorate,decoration={brace,mirror,amplitude=6pt}]
        (c2.south west) -- (c4.south east)
        node [plain] [midway,below=6pt] {$m+1$ $\text{child}(v_c)_i$};

        \end{tikzpicture}
        \caption{\configuration{2b}}
        \label{configIIb}
    \end{subfigure}
    \begin{subfigure}[b]{0.35\textwidth}
    \begin{tikzpicture}
        [every node/.style={fill=gray!25,circle, inner sep=2pt},
        level distance=7mm,
        font=\scriptsize,
        sibling distance=12mm]

        \node {3}
        child { node [xshift=-12mm] {$v_c$}
        child { node (c1) {\majcol{1}} }
        child { node (c2) {\majcol{2}} }
        child { node (c3) {3}}
        child { node (c4) [plain]{$\cdots$} }
        child { node (c5) {3} }
        };

        \draw [decorate,decoration={brace,mirror,amplitude=6pt}]
        (c3.south west) -- (c5.south east)
        node [plain] [midway,below=6pt] {$m$ $\text{child}(v_c)_i$};

        \end{tikzpicture}
        \caption{\configuration{2c}}
        \label{configIIc}
    \end{subfigure}
    \caption{Critical configurations on a rooted tree to be avoided by Alice, where \(m = \floor{\deg(v_c)/2}\).}
    \label{crit_config_trees}
\end{figure}

\section{Trees with maximum degree at most \(4\) are game majority \(3\)-colorable}

We start by giving a strategy for Alice with \(3\) colors on trees with maximum degree at most \(4\).
We then prove that using this strategy, Alice can avoid some intermediate configurations and as a result she avoids all the critical configurations.

\subsection{Alice's strategy}\label{SS:Strategy}

Alice plays the majority \(3\)-coloring game on a tree \(T\) with \(\Delta(T)\leq 4\) as per the following strategy.
In a rooted tree, denote the parent of the vertex $v$ by \(p(v)\), and the grandparent of \(v\) by \(p^2(v)\).
\begin{itemize}
\item In her first move, Alice colors the an arbitrary vertex and roots the tree at that vertex.
\item Let $u$ be the vertex Bob colors on his last turn. Define the vertex \(v\) as follows:
\(v := p(u)\) if \(p(u)\) is majority colored and \(v := u\) otherwise.

\item Suppose \(p(v)\) is uncolored.
	\begin{itemize}
		\item If \(p^2(v)\) is colored, then Alice colors \(p(v)\) with a color different from \(p^2(v)\).
		\item If \(p^2(v)\) is uncolored, then Alice colors \(p(v)\) such that \(p(v)\) does not become majority colored.
	\end{itemize}
\item In all other cases, Alice colors an uncolored vertex \(x\) with a colored parent, using a color different from \(p(x)\).
\end{itemize}

\begin{algorithm}
\caption{Alice's strategy for the majority \(3\)-coloring game on trees with maximum degree at most \(4\)}\label{A:Algorithm}
\begin{algorithmic}
    \State Color any vertex and root it in the first move.
    \State For the remaining moves, let $u$ be the last vertex colored by Bob.
    \If{$p(u)$ is majority colored}
        \State $v \coloneq p(u)$
    \Else
        \State $v \coloneq u$
    \EndIf
    \If{$p(v)$ is uncolored}
        \If{$p^2(v)$ is colored}
            \State color $p(v)$ different from $p^2(v)$, 
        \ElsIf{$p^2(v)$ is uncolored}
            \State color $p(v)$ such that $p(v)$ is not majority colored
        \EndIf
    \Else
        \State color an uncolored vertex \(x\) with a colored parent, with a color different from \(p(x)\)
    \EndIf
\end{algorithmic}
\end{algorithm}

\subsection{Preliminary lemmas}\label{S:Preliminaries}

We consider three more basic patterns in a rooted tree as shown in Figure~\ref{F:basic_config}.

\begin{figure}[h]

\begin{subfigure}[b]{0.3\textwidth}
 
\begin{tikzpicture}
    [level distance=8mm,
    font=\scriptsize,
    every node/.style={fill=gray!25,circle, inner sep=3pt}]
    \node {$v$}
    child{node {\textbf{\majcol{i}}}};
\end{tikzpicture}
\caption{\configuration{3}: a majority colored vertex with an uncolored parent}
\label{config_3}
\end{subfigure}
\hfill
\begin{subfigure}[b]{0.3\textwidth}
    \begin{tikzpicture}
        [level distance=8mm,
        font=\scriptsize,
        every node/.style={fill=gray!25,circle, inner sep=3pt}]

        \node at (0,0) {$v_c$}
        child{node [xshift=-5mm] {\majcol{i}}};
        \node at (2,0) {$v_c'$}
        child{node [xshift=-5mm] {\majcol{j}}};
    \end{tikzpicture}
    \caption{\configuration{4}: two distinct copies of \configuration{3}}
    \label{config_4}
\end{subfigure}
\hfill    
\begin{subfigure}[b]{0.3\textwidth}
 
\begin{tikzpicture}
    [level distance=8mm,
    font=\scriptsize,
    every node/.style={fill=gray!25,circle, inner sep=3pt},
    level 1/.style={sibling distance=16mm}]
    \node {$v$}
    child{node{\textbf{\majcol{i}}}}
    child{node{\textbf{\majcol{j}}}};
\end{tikzpicture}
\caption{\configuration{5}: two majority colored siblings with distinct colors and an uncolored parent}
\label{config_5}
\end{subfigure}
\caption{Basic configurations in a rooted tree.}\label{F:basic_config}
\end{figure}

We will be able to show that by the strategy described in Algorithm~\ref{A:Algorithm} Alice will never create \configuration{3}, as required based on the analysis in Section~\ref{S:Counterexample}.
The bound on the maximum degree will be crucial in ruling out the occurrence of any critical configuration.

Throughout, we assume that the game is being played using \(3\) colors on a tree \(T\) with \(\Delta(T) \leq 4\), that Alice follows the strategy in Algorithm~\ref{A:Algorithm}, and that Alice has played the first move and rooted the tree at that vertex.
We first show that if \configuration{5} does not arise, then Alice can indeed follow the strategy of Algorithm~\ref{A:Algorithm}.

\begin{lemma}\label{L:algo-works}
    Assume that \configuration{5} does not arise.
    Then Alice can follow Algorithm~\ref{A:Algorithm} to color her choice of vertex.
\end{lemma}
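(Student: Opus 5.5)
We check each branch of Algorithm~\ref{A:Algorithm} separately and show that, in each, the vertex Alice is told to color has a legal color with the required property. By Observation~\ref{obs_checkx}, coloring a vertex \(w\) can only break the majority condition inside \(N[w]\). So a color \(i\) is illegal at \(w\) for one of two reasons: either \(w\) already has more than \(\floor{\deg(w)/2}\) neighbors of color \(i\), or some neighbor of \(w\) is majority colored with \(i\). With three colors, it suffices to show that at most two colors are excluded, once we include the extra color forbidden by the strategy (the color of \(p^2(v)\), the color of \(p(x)\), or a color that would make \(p(v)\) majority colored).

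\textbf{Branch 1: \(p(v)\) and \(p^2(v)\) both uncolored.} Let \(w=p(v)\). Since \(p(w)=p^2(v)\) is uncolored, every colored neighbor of \(w\) is a child of \(w\), and there are at most \(\deg(w)-1\le 3\) of them. If two distinct colors were each carried by a majority-colored child of \(w\), then \(w\) together with those two children would form \configuration{5}, which is excluded by hypothesis. So at most one color \(i\) is forbidden because of a majority-colored neighbor.

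Next we count colors that would leave \(w\) with too many same-colored neighbors, or make \(w\) itself majority colored. With \(\deg(w)\le 4\) and at most three colored children, a short case check on \(\deg(w)\in\{1,2,3,4\}\) shows that at most one color is bad in this way. If that bad color differs from \(i\), a third color is still available. The main obstacle is to make sure this counting always leaves a color that is legal \emph{and} does not make \(w\) majority colored. I would go through the possible multisets of children's colors explicitly for \(\deg(w)=3\) and \(\deg(w)=4\). If the naive count fails in some case, I would relax the requirement, since legality is all the lemma needs.

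\textbf{Branch 2: \(p(v)\) uncolored and \(p^2(v)\) colored, say with color \(a\).} Now \(w=p(v)\) has at most three colored children plus its colored parent. We must avoid \(a\), and we must avoid any color \(j\) such that some neighbor is majority colored with \(j\) or such that \(w\) would exceed its majority bound. No two children of \(w\) are majority colored with distinct colors, again because \configuration{5} is excluded. So the children forbid at most one color, and the parent's color is already avoided. If \(w\) had \(\floor{\deg(w)/2}+1\) colored neighbors of a single color \(b\neq a\), that would use up most of its at most \(4\) neighbors; with \(\Delta\le 4\) this leaves no room for yet another forbidden color. A direct check on the few possible color patterns then shows that some color different from \(a\) is legal.

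\textbf{Branch 3: fallback move.} Here Alice colors an uncolored \(x\) whose parent is colored, say with color \(c\), using a color different from \(c\). If any such \(x\) exists, it has only its parent colored among its neighbors, because a colored child would have an uncolored parent, and we argue this cannot happen at this point. Hence the only forbidden colors are \(c\) and possibly one color coming from \(p(x)\) being majority colored, which is \(c\) itself. So two colors remain available. If no such \(x\) exists, then every uncolored vertex has an uncolored parent, which forces every vertex to be colored since the root is colored, and the game is already over. In the write-up I would also verify that the vertex \(v\) defined in the strategy is well defined, i.e.\ that \(p(u)\) exists whenever it is used, with the root handled trivially.
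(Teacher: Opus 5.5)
Your plan is the paper's: split on whether the parent of the vertex to be colored is already colored, use the absence of \configuration{5} to rule out two majority-colored children of distinct colors, and use $\Delta(T)\le 4$ to rule out the counting obstruction. Your Branch~2 is essentially the paper's first case and is fine. Two other steps fail as written.

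\textbf{Branch~3.} The claim that the fallback vertex $x$ has no colored child is false, so the argument you promise for it cannot be supplied. Your Branch~1 itself routinely leaves colored vertices under uncolored parents: Alice colors $p(v)$ while $p^2(v)$ stays uncolored. For example, take the path with vertices $d,r,a,b,c$ in this order, rooted at $r$. Alice colors $r$, Bob colors $c$, Alice colors $b$ (Branch~1), and Bob colors $d$ with a color different from that of $r$. Now $r$ is not majority colored, so the fallback applies, and the only candidate is $x=a$, whose child $b$ is colored. The repair is to observe that your Branch~2 argument used only four facts: the vertex is uncolored, it has a colored parent, it has at most three children, and \configuration{5} is absent. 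So it applies verbatim to $x$. The paper does exactly this, treating ``$p(w)$ colored'' as a single case that covers both the second bullet of the strategy and the fallback.

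\textbf{Branch~1.} Your count ``at most one color is bad in this way'' fails for $\deg(w)=3$. If the two children of $w$ carry distinct colors $1$ and $2$, then either color gives $w$ exactly $\lfloor 3/2\rfloor=1$ same-colored neighbor, so both are bad. The conclusion survives, but you need an observation you are missing: a color forbidden because some child is majority colored with it is necessarily one of the children's colors.
\begin{itemize}
\item For $\deg(w)\in\{2,3\}$, every bad color therefore lies among the at most $\deg(w)-1\le 2$ colors present on the children.
\item For $\deg(w)=4$, at most one color occurs at least twice among the three children. By the absence of \configuration{5}, at most one further color is forbidden by a majority-colored child.
\end{itemize}
Either way at most two colors are excluded, and the remaining color is legal and leaves $w$ not majority colored. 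Your proposed fallback of ``relaxing the requirement to legality'' is not an option. The strategy explicitly requires that $p(v)$ not become majority colored, and Corollary~\ref{C:config_3} needs exactly that property; Lemma~\ref{L:config_3}, and hence the main theorem, rest on that corollary.
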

\begin{proof}
    First, consider the case when \(p(w)\) is colored, say with color \(1\).
    If the colors \(2\) and \(3\) are forbidden for \(w\), then either \(w\) has two children that are colored \([2]\) and \([3]\), or \(w\) has one child colored \([2]\) and \(\floor{\deg(w)/2} + 1\) children colored \(3\) (or vice-versa).
    The first scenario is not possible because we assume that \configuration{5} does not arise.
    For the second scenario, a case analysis shows that since \(\deg(w) \leq \Delta(T) \leq 4\), \(w\) does not have enough children to have both a majority colored child with the color \([2]\) as well as \(\floor{\deg(w)/2} + 1\) children colored \(3\).
    Indeed, for \(\deg(w) = 1,2,3,4\), we find that the second scenario requires \(w\) to have at least \(\floor{\deg(w)/2} + 2 = 2,3,3,4\) children, respectively.
    But this is not possible since \(w\) also has a parent.
    So, at least one of the colors \(2\) and \(3\) is available for Alice at the vertex \(w\).

    Next, consider the case when \(p(w)\) is not colored.
    As per Algorithm~\ref{A:Algorithm}, this situation only arises when the vertex \(w\) has a colored child; in particular, \(w\) is not a leaf.
    By Observation~\ref{obs_checkx}, if Alice has at least two choices of colors available for \(w\), then she can choose one for \(w\) such that \(w\) does not become majority colored.
    So, suppose (without loss of generality) that Alice cannot color \(w\) with colors \(2\) and \(3\).
    Then, either \(w\) has two children that are colored \([2]\) and \([3]\), or \(w\) has one child colored \([2]\) and \(\floor{\deg(w)/2} + 1\) children colored \(3\) (or vice-versa).
    We have seen in the previous paragraph that neither scenario can arise, so at least one of the colors \(2\) and \(3\) is available for Alice at the vertex \(w\).
    Say that it is the color \(3\).
    If coloring \(w\) with \(3\) does not make \(w\) majority colored, then we are done.
    If not, Alice would be forced to use the color \(3\) on \(w\) only if the colors \(1\) and \(2\) are both forbidden at \(w\).
    By the same argument repeated for this pair of colors, at least one of them is available at \(w\).
    So, Alice can color \(w\) such that \(w\) does not become majority colored.
    This completes the proof.
\end{proof}

\begin{corollary}\label{C:config_3}
    Assume that \configuration{5} does not arise.
    Then, Alice does not create \configuration{3}.
\end{corollary}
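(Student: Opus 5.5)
The plan is to go through every kind of move Alice can make under Algorithm~\ref{A:Algorithm} and check that none of them produces \configuration{3}. Recall that \configuration{3} is a majority colored vertex whose parent is uncolored. Coloring a vertex \(w\) changes the status only of vertices in \(N[w]\) (Observation~\ref{obs_checkx}). So when Alice colors \(w\), a new copy of \configuration{3} can appear in just two ways: either \(w\) itself becomes majority colored while \(p(w)\) is uncolored, or some already-colored child of \(w\) becomes majority colored.

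The second way cannot happen. The parent \(w\) of such a child is now colored, so no new \configuration{3} arises below \(w\). For the same reason, whether \(p(w)\) becomes majority colored is irrelevant, since its child \(w\) is colored. Only the status of \(w\) itself remains to be examined.

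I would split by the branches of the algorithm, using Lemma~\ref{L:algo-works} (valid under the standing assumption that \configuration{5} does not arise) to guarantee that each prescribed move is legal.
\begin{enumerate}
\item \emph{First move, and the branches ``\(p^2(v)\) colored'' or ``uncolored vertex \(x\) with a colored parent''.} Here Alice colors a vertex whose parent is already colored, or the root, which has no parent. So \(w\) cannot be the lower vertex of \configuration{3}.
\item \emph{Branch ``\(p^2(v)\) uncolored''.} Alice colors \(w=p(v)\) so that \(w\) is \emph{not} majority colored. Lemma~\ref{L:algo-works} shows explicitly that this choice is possible, so \(w\) is not the lower vertex of \configuration{3} either.
\end{enumerate}

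The one delicate point is that ``majority colored'' depends on the current coloring. In every case above we must therefore confirm that \(w\) is not left as the lower vertex of \configuration{3} right after Alice's move. The claim concerns only what Alice creates on her own move, and later changes caused by Bob are not part of this corollary. I expect this bookkeeping, checking that no vertex of \(N[w]\) other than \(w\) can newly become a \configuration{3}, to be the only real step. It reduces to the observation above that the parent of every vertex in \(N(w)\setminus\{p(w)\}\) is \(w\), which is now colored.
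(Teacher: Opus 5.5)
\paragraph{A gap in your treatment of $p(w)$.} You dismiss this neighbor on the grounds that ``its child $w$ is colored.'' That is the wrong reason. \configuration{3} is a majority colored vertex together with its \emph{parent}, and that parent must be uncolored. If $p(w)$ newly becomes majority colored, the relevant pair is $(p^2(w),p(w))$. This is a copy of \configuration{3} whenever $p^2(w)$ is uncolored, and whether $w$ is colored is beside the point. Your final reduction to ``the parent of every vertex in $N(w)\setminus\{p(w)\}$ is $w$'' therefore skips exactly the one neighbor that needs an argument.

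The case can genuinely occur. After a move in the branch ``$p^2(v)$ uncolored'', the vertex $y=p(v)$ is colored while $p(y)$ is not. A later move in the fallback branch may then color another child $x$ of $y$. Suppose $\deg(y)=3$. Then $v$ has a color different from $y$, since $y$ was chosen not to be majority colored. If $x$ (not a leaf) received the color of $y$, this could be a legal move, and it would make $y$ majority colored below the uncolored $p(y)$.

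\paragraph{The fix.} The missing ingredient is the parent-color rule of Algorithm~\ref{A:Algorithm}. The number of neighbors of $p(w)$ sharing its color increases only if $w$ receives the color of $p(w)$.
\begin{itemize}
\item In the two branches where $p(w)$ is colored, Alice must use a color different from that of $p(w)$. These are the branch ``$p^2(v)$ colored'', where $p(w)=p^2(v)$, and the fallback branch, where $p(w)=p(x)$. Lemma~\ref{L:algo-works} guarantees such a color is available. Hence $p(w)$ cannot newly become majority colored.
\item In the branch ``$p^2(v)$ uncolored'', $p(w)$ is itself uncolored, so it is not majority colored.
\item On the first move, $w$ is the root and has no parent.
\end{itemize}
Adding this to your correct treatment of $w$ itself and of the children of $w$ completes the proof. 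This is presumably the reasoning the paper intends when it states the result as an immediate corollary of Lemma~\ref{L:algo-works}.
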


Next, we rule out some of the forbidden configurations by assuming that Alice does not create \configuration{3}.

\begin{lemma}\label{L:config_4} 
    Assume that Alice does not create \configuration{3} (Figure~\ref{config_3}) and \configuration{5} does not arise.
    Then, \configuration{4} (Figure~\ref{config_4}) does not arise.
\end{lemma}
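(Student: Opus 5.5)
We argue by induction on the number of moves, proving the stronger invariant that \emph{after each of Alice's moves}, there is no copy of \configuration{3} at all. Since \configuration{4} consists of two copies of \configuration{3}, it then suffices to show that at no point in the game do two copies of \configuration{3} exist simultaneously. Alice's first move colors the root, which has no parent, so the invariant holds initially.

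Suppose the invariant holds after some move of Alice, and Bob colors a vertex \(u\). By Observation~\ref{obs_checkx}, the only vertices whose majority status can change are those in \(N[u]\). A new copy of \configuration{3} is a majority colored vertex \(x\) whose parent is uncolored. Such an \(x\) must lie in \(N[u]\) and be colored, so \(x\) is either \(u\) itself, \(p(u)\), or a child of \(u\).

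If \(x\) is a child of \(u\), then \(p(x)=u\) is now colored, so \(x\) does not give a copy of \configuration{3}. This leaves two candidates: \(x=u\), which is a copy only if \(p(u)\) is uncolored, and \(x=p(u)\), which is a copy only if \(p(u)\) is majority colored and \(p^2(u)\) is uncolored. These two cannot occur together, since the first needs \(p(u)\) uncolored and the second needs it colored. So after Bob's move there is at most one copy of \configuration{3}, and hence no \configuration{4}.

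It remains to check that Alice's reply removes this single copy. The vertex \(v\) in Algorithm~\ref{A:Algorithm} is chosen to be exactly the candidate \(x\):
\begin{itemize}
\item if \(p(u)\) is majority colored, then \(v=p(u)\);
\item otherwise \(v=u\).
\end{itemize}
When \(p(v)\) is uncolored, Alice colors \(p(v)\), which destroys that copy. By Lemma~\ref{L:algo-works} this move is available, since \configuration{5} does not arise.

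One subtlety needs care. We must verify that whenever a copy exists, it is really at the \(v\) chosen by the algorithm. In the case \(x=u\) with \(p(u)\) uncolored, \(p(u)\) is not majority colored, so \(v=u\), as required. In the case \(x=p(u)\), we have \(v=p(u)\), as required.

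We also need Alice's own move not to create a new copy of \configuration{3}. This is exactly Corollary~\ref{C:config_3}, which is where the assumption on \configuration{5} is used. That corollary handles the vertex she colors. Any neighbor of it that becomes majority colored is either its parent, which is colored and hence not a copy, or one of its children, whose parent is now colored; so no other copy can arise from her move either.

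The main obstacle is making precise that Bob's single move cannot create two copies at once. This reduces to the bookkeeping above over the closed neighborhood \(N[u]\): children of \(u\) are excluded because their parent is colored, and \(u\) and \(p(u)\) are mutually exclusive. Some care is needed to phrase majority status changes correctly, since coloring \(u\) can make a neighbor majority colored only by increasing that neighbor's count of same-colored neighbors.
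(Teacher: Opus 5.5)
Your proof is correct and is essentially the paper's argument: every copy of \configuration{3} is created by Bob and destroyed by Alice's very next move, which Lemma~\ref{L:algo-works} guarantees she can make. You phrase it as a forward invariant, and you make explicit two points the paper's ``first instance'' argument leaves implicit: a single Bob move creates at most one copy (the candidates $u$ and $p(u)$ are mutually exclusive), and the algorithm's $v$ is exactly that copy. One slip in your last paragraph: a colored parent of the vertex Alice colors is not thereby ``not a copy'', since it would be one if it became majority colored with an uncolored parent; the correct reason is that Alice always uses a color different from a colored parent, so its same-color count is unchanged --- and in any case the lemma's hypothesis that Alice does not create \configuration{3} already covers this step.
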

\begin{proof}
    Suppose the conclusion is false, for the sake of contradiction.
    Consider the first instance in which \configuration{4} arises, that is, there are uncolored vertices \(v_c\) and \(v_c'\) with majority colored children \majcol{i} and \majcol{j}, respectively. 
    By assumption, since Alice does not create \configuration{3}, both \majcol{i} and \majcol{j} were created by Bob.
    When Bob created the first one between them, say \majcol{i}, Alice would have colored its parent $v_c$ in the very next move, i.e., before Bob creates \majcol{j}, a contradiction.
    
    It only remains to show that Alice can indeed color \(v_c\) as per Algorithm~\ref{A:Algorithm}.
    Since we assume that \configuration{5} does not arise, Lemma~\ref{L:algo-works} is applicable, and this concludes the proof.
\end{proof}

\begin{lemma}\label{L:config_5}
    Assume that Alice does not create \configuration{3}.
    Then, \configuration{5} (Figure~\ref{config_5}) does not arise.
\end{lemma}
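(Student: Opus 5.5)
We argue by contradiction, looking at the first move after which \configuration{5} exists. At that moment there is an uncolored vertex \(v\) with two children \(a\) and \(b\), majority colored \([i]\) and \([j]\) with \(i\neq j\). Just before that move, \configuration{5} did not exist, so the whole history up to then satisfies the hypotheses of Lemma~\ref{L:algo-works} and Lemma~\ref{L:config_4}. In particular, every earlier Alice move was legal under Algorithm~\ref{A:Algorithm}, and before the critical move \configuration{4} was also absent.

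Since \(v\) is uncolored both before and after the critical move, \(a\) and \(b\) cannot both have been majority colored before it. Otherwise \configuration{5} would already have been present. So on the critical move some vertex \(x\) is colored that makes \(a\), or \(b\), or both, newly majority colored. By Observation~\ref{obs_checkx}, \(x\in N[a]\cup N[b]\), and \(x\neq v\). Hence \(x\) is \(a\), \(b\), or a child of \(a\) or of \(b\). Note that coloring \(x\) changes majority status only within \(N[x]\), which contains at most one of \(a,b\) unless \(x=v\). So exactly one of them, say \(a\), becomes majority colored on this move, while \(b\) was already majority colored with uncolored parent \(v\), i.e.\ \(b\) was a \configuration{3}.

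\textbf{Case 1: Alice made the critical move.} A newly majority colored \(a\) with uncolored parent \(v\) is a fresh \configuration{3}. This holds whether \(x=a\) or \(x\) is a child of \(a\). Creating it contradicts the hypothesis that Alice never creates \configuration{3}. (I expect a subtlety here: does "Alice creates \configuration{3}" also cover the case \(x\) is a child of \(a\)? I would read the hypothesis as covering any Alice move after which a new majority colored vertex with uncolored parent appears. Corollary~\ref{C:config_3} is proved only for the vertex Alice colors, so this may need the extra remark that, under the strategy, Alice colors \(x\) with a color different from \(p(x)=a\) whenever \(p(x)\) is colored. Then \(a\) cannot gain a same-colored neighbor from her move.)

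\textbf{Case 2: Bob made the critical move.} Then \(b\)'s \configuration{3} already existed after Alice's preceding move. Since Alice never creates \configuration{3}, \(b\) became majority colored with uncolored parent on an earlier Bob move \(u\). Alice's strategy in her response then sets \(v:=p(u)\) or \(u\), and colors the parent of the newly majority colored vertex. I would check the two subcases:
\begin{itemize}
\item \(u=b\) or \(u\) a child of \(b\) with \(p(u)=b\) majority colored: Alice colors \(p(b)=v\).
\item \(u\) a child of \(b\) with \(b\) not yet majority colored: then \(b\) did not become majority colored on that move, which contradicts the choice of \(u\).
\end{itemize}
So \(v\) would have been colored immediately, contradicting that \(v\) is uncolored. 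The remaining gap is that Bob's move might simultaneously create two \configuration{3}s. Here Lemma~\ref{L:config_4} excludes \configuration{4} in the earlier history, so at most one pending \configuration{3} exists, and Alice resolves it on her next move.

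The main obstacle is bookkeeping. I must make sure that the \configuration{3} at \(b\) really was created by a single identifiable Bob move that Alice answered by coloring \(v\). I must also confirm that Alice's response was not preempted by a competing pending \configuration{3}, which is where Lemma~\ref{L:config_4} is needed.
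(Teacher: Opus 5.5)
Your proposal is correct and follows the paper's proof: take the first occurrence of \configuration{5}, use the hypothesis on Alice to see that the earlier of the two majority colored children arose from a Bob move, and conclude via Lemma~\ref{L:algo-works} that Alice would have colored their common parent on her very next move, with Lemma~\ref{L:config_4} dismissing a competing pending \configuration{3} as in the paper. Your case split on who makes the critical move only makes explicit the paper's claim that both children were created by Bob, and the worry you flag at the end is actually vacuous: Alice's response depends only on Bob's last move, and a single move in a tree creates at most one \configuration{3}, since the colored vertex qualifies only if its parent is uncolored, its parent qualifies only if it is colored, and its children have a colored parent.
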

\begin{proof}
    Suppose the conclusion is false, for the sake of contradiction.
    Consider the first instance in which \configuration{5} arises, that is, there is an uncolored vertex \(v_c\) with two majority colored children \majcol{i} and \majcol{j} where \(i \neq j\).
    Since we assume that Alice does not create \configuration{3}, both \majcol{i} and \majcol{j} must have been created by Bob.
    When Bob created the first one of them, say \majcol{i}, Alice would have chosen to color $v_c$ in the very next move as per Algorithm~\ref{A:Algorithm}.
    Since the first instance of \configuration{5} has not yet arisen at this stage, Lemma~\ref{L:algo-works} is applicable, and Alice can indeed color \(v_c\) as per Algorithm~\ref{A:Algorithm}, contradicting that \(v_c\) was uncolored when \majcol{j} was created.
    
    The only other scenario to address is that when \majcol{i} was created, there was also some other majority colored vertex with an uncolored parent \(v_c'\), and Alice picked \(v_c'\) to color as per Algorithm~\ref{A:Algorithm}. 
    However, this is \configuration{4}, and since the first instance of \configuration{5} has not yet appeared, Lemma~\ref{L:config_4} is applicable, and so this scenario does not arise.
    Thus, \configuration{5} does not arise as well.
\end{proof}

\subsection{Proof of Theorem~\ref{T:Main1}}

We will first show that Corollary~\ref{C:config_3} and Lemma~\ref{L:config_5} together imply that Alice does not create \configuration{3}.
\begin{lemma}\label{L:config_3}
    If Alice follows the strategy in Algorithm~\ref{A:Algorithm}, Alice will not create any majority colored vertex having an uncolored parent by her move, i.e. Alice does not create \configuration{3}.
\end{lemma}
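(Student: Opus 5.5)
The two earlier results are conditional on each other, so the plan is to break the circularity by induction on the number of moves played. Corollary~\ref{C:config_3} says Alice does not create \configuration{3} provided \configuration{5} never arises. Lemma~\ref{L:config_5} says \configuration{5} never arises provided Alice does not create \configuration{3}. We therefore consider a game in which Alice follows Algorithm~\ref{A:Algorithm}. We suppose, for contradiction, that at some point either Alice creates \configuration{3} or \configuration{5} arises, and we look at the \emph{first} move $t$ at which one of these two events happens.

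We split into cases according to which event happens first at move $t$.

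\emph{Case 1: the first event is the appearance of \configuration{5}, and Alice has not created \configuration{3} before.} The proof of Lemma~\ref{L:config_5} only uses the hypothesis about Alice not creating \configuration{3} on moves strictly before the one where \configuration{5} appears. It also uses Lemma~\ref{L:algo-works} and Lemma~\ref{L:config_4} at earlier stages. There, the hypothesis ``\configuration{5} does not arise'' holds up to that stage by minimality of $t$. So the argument of Lemma~\ref{L:config_5} applies verbatim to the truncated game and gives a contradiction. If move $t$ is Alice's and it simultaneously creates \configuration{3}, we fall into Case 2.

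\emph{Case 2: at move $t$, Alice creates \configuration{3}, and \configuration{5} has not arisen before move $t$.} On Alice's move $t$ the position she faces is one in which \configuration{5} has not arisen. This is exactly the hypothesis under which Lemma~\ref{L:algo-works} was proved, since that proof only inspects the current position around the vertex $w$ Alice colors.

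In Case 2 we check that Alice's move does not produce a majority colored vertex with an uncolored parent. If she colors $w$ whose parent is uncolored, Lemma~\ref{L:algo-works} says she can color $w$ so that $w$ is not majority colored. If she colors $w$ whose parent is colored, then $w$ itself is not an instance of \configuration{3}. It remains to check the neighbors of $w$ (Observation~\ref{obs_checkx}). A child $x$ of $w$ now has a colored parent, so it is not an instance of \configuration{3}. The parent of $w$ is either colored or is the only candidate; if it is uncolored, only $w$ could be affected, which is handled above. Hence no new \configuration{3} arises, a contradiction.

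The main obstacle I expect is the bookkeeping in this minimality argument. One must verify that each earlier lemma's proof uses its hypothesis only for moves up to the current one, not globally, so that the ``first violation'' can be fed back in. One must also handle the neighbor check carefully: coloring $w$ can turn a previously colored child $x$ of $w$ into a majority colored vertex, but since $p(x)=w$ is then colored, this does not create \configuration{3}.
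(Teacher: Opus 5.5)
Your overall argument is the paper's. The paper takes the first move \(n_0\) at which Alice creates \configuration{3}, applies Lemma~\ref{L:config_5} to the game truncated at move \(n_0-1\), and then applies Corollary~\ref{C:config_3} to get a contradiction at move \(n_0\). Your ``first occurrence of either event'' is a more explicit version of that truncation, and your Case~1 bookkeeping is fine.

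There is one step in Case~2 that is not justified as written: the neighbor check when \(p(w)\) is colored. You check \(w\) itself and the children of \(w\), but \(p(w)\) can also change status. If \(p^2(w)\) is uncolored and \(w\) receives the color of \(p(w)\), then \(p(w)\) may become majority colored while its parent is uncolored. That is a new copy of \configuration{3} sitting neither at \(w\) nor at a child of \(w\). Your sentence that the parent of \(w\) ``is either colored or is the only candidate'' does not address this case.

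What excludes it is that Algorithm~\ref{A:Algorithm} gives \(w\) a color different from \(p(w)\) whenever \(p(w)\) is colored. This holds both in the branch where \(p^2(v)\) is colored and in the default branch. Hence the number of neighbors of \(p(w)\) sharing its color does not change. The first paragraph of the proof of Lemma~\ref{L:algo-works} shows that such a color is available as long as \configuration{5} has not yet arisen. With this point added, Case~2 is complete; the paper leaves it implicit inside Corollary~\ref{C:config_3}.
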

\begin{proof}
    Suppose not, for the sake of contradiction.
    Consider the first instance where Alice does create \configuration{3} by her move.
    Say Alice's turn when this occurs is move \(n_0\) of the game, \(n_0 \geq 3\).
    Then, up to move \(n_0 - 2\), Alice did not create \configuration{3}, so by Lemma~\ref{L:config_5}, \configuration{5} did not arise in the game up to move \(n_0 - 1\).
    But then Corollary~\ref{C:config_3} is applicable at move \(n_0 - 1\), which means that Alice does not create \configuration{3} on her following move, namely move \(n_0\), which is a contradiction.
\end{proof}

\begin{corollary}\label{C:config_5}
    If Alice follows the strategy in Algorithm~\ref{A:Algorithm}, then \configuration{5} does not arise.
\end{corollary}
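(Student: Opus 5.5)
This corollary follows by chaining Lemma~\ref{L:config_3} and Lemma~\ref{L:config_5}. I will not redo any of the underlying case analysis.

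First, apply Lemma~\ref{L:config_3}. It says that if Alice follows Algorithm~\ref{A:Algorithm}, she never creates \configuration{3} on any of her moves. That lemma was itself proved by induction on the move number $n_0$, using Corollary~\ref{C:config_3} and Lemma~\ref{L:config_5} on the earlier portion of the game. So its conclusion holds throughout the entire game, not merely up to some fixed move.

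Second, the hypothesis of Lemma~\ref{L:config_5} is exactly ``Alice does not create \configuration{3}'', and this has just been established for the whole game. Invoking Lemma~\ref{L:config_5} then gives that \configuration{5} never arises.

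The one point to check is that the two statements quantify over the same stretch of play. Lemma~\ref{L:config_5} considers the first instance of \configuration{5}, and that instance occurs at a finite move. Lemma~\ref{L:config_3} guarantees that Alice has not created \configuration{3} at any move before it. Hence the contradiction argument in the proof of Lemma~\ref{L:config_5} applies verbatim at that instance. This bookkeeping is the only potential obstacle, and it is resolved by the global form of Lemma~\ref{L:config_3}.

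As a consequence, Lemma~\ref{L:algo-works} and Lemma~\ref{L:config_4} also apply unconditionally for the remainder of the game.
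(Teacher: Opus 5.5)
Your proposal is correct and matches the paper's argument, which the paper leaves implicit: the conclusion of Lemma~\ref{L:config_3} (Alice never creates \configuration{3}) is exactly the hypothesis of Lemma~\ref{L:config_5}, which then rules out \configuration{5}. Your remark that the first instance of \configuration{5} occurs at a finite move, by which point Lemma~\ref{L:config_3} already guarantees the hypothesis, is the right bookkeeping, and no further argument is needed.
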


We are now ready to prove that the given strategy is a winning strategy for Alice on ternary trees.

\begin{theorem}\label{T:Main'}
If \(T\) is a tree with \(\Delta(T) \leq 4\), then $\mu_g(T) \le 3$.
\end{theorem}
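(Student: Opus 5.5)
The plan is to show that Alice, following Algorithm~\ref{A:Algorithm}, always has a legal move and that no critical vertex ever arises. Since the game ends only when every vertex is colored or someone cannot move, this means Alice wins. By Lemma~\ref{L:config_3} and Corollary~\ref{C:config_5}, throughout the game Alice never creates \configuration{3} and \configuration{5} never arises. Then Lemma~\ref{L:config_4} shows that \configuration{4} never arises either. Lemma~\ref{L:algo-works} already guarantees that Alice can always execute the algorithm. It remains to show that Bob can never be left without a legal move, and that at the end there is no uncolored vertex for which all three colors are forbidden. By the analysis of Section~\ref{S:Counterexample}, such a vertex would be a critical vertex \(v_c\) in one of the configurations of Figure~\ref{crit_config_trees}.

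I would rule out each critical configuration using the invariants above.
\begin{itemize}
\item \configuration{1b} and \configuration{2a} each contain an uncolored \(v_c\) with two majority colored children of distinct colors. That is exactly \configuration{5}, so neither can occur.
\item \configuration{1a} and \configuration{2b} have an uncolored \(v_c\) with at least one majority colored child (so \configuration{3} holds at \(v_c\)) together with a majority colored parent. I would argue by degree counting. In \configuration{2b}, \(v_c\) needs \(1+1+(m+1)\) neighbors with \(m=\floor{\deg(v_c)/2}\), i.e.\ \(m+3\le\deg(v_c)\le 4\). This forces \(\deg(v_c)\in\{3,4\}\) with \(m+3 = 4\) or \(5\), so \(\deg(v_c)=4\) is impossible and only \(\deg(v_c)=3\), \(m=1\) survives, with one majority colored child and two children colored \(3\).
\item For the surviving configurations (\configuration{1a} and \configuration{2b} with \(\deg(v_c)=3\)), look at the parent \(p(v_c)\). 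It is colored but has an uncolored child \(v_c\), and its majority colored status must have been produced at some move. I expect to show that whichever vertex completed the configuration was colored by Bob, and that Alice, on her previous reply, would already have colored \(v_c\). The reason is that the first majority colored child of \(v_c\) creates \configuration{3}, which Alice's rule repairs immediately.
\end{itemize}

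The core of this step is a timing argument like the one in Lemma~\ref{L:config_4}. Consider the first moment a majority colored child of \(v_c\) appears while \(v_c\) is uncolored. This can only be Bob's doing, and Alice's next move colors \(v_c\) unless another \configuration{3} is pending, which would be \configuration{4}. So \(v_c\) can have a majority colored child at the start of Bob's turn only if that child was created on Bob's very last move. Hence a critical vertex arising after Bob's move needs Bob to have created two simultaneous threats in one move, or one threat plus a pre-existing one. I would handle the case where coloring \(u\) makes both \(u\) and \(p(u)\) majority colored; this is why the algorithm sets \(v=p(u)\). Bob's single move affects only \(N[u]\) by Observation~\ref{obs_checkx}, so he can complete a critical configuration only at \(p(u)\) or at \(p^2(u)\). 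Alice's algorithm colors precisely the relevant vertex \(p(v)\) with a color different from \(p^2(v)\), and she does so before the third neighbor becomes majority colored.

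The main obstacle is \configuration{2c}, together with the case where Bob's move makes \(p(u)\) majority colored and \(p^2(u)\) uncolored. In those cases one must check, using \(\Delta\le 4\), that there are not enough neighbors for Bob to simultaneously have two distinct majority colored neighbors of \(v_c\) plus \(m\) further children and the parent in color \(3\). In \configuration{2c}, \(v_c\) needs \(1 + 2 + m\) neighbors, so \(3+m\le\deg(v_c)\le4\) forces \(\deg(v_c)=3\) or \(4\) with \(m=1\) or \(2\), leaving only \(\deg(v_c)=3\). There it again contains \configuration{5}, which is excluded. I would do this small degree case analysis explicitly. Finally, Bob always has a move whenever Alice does by the same invariants, and the statement for forests follows by applying the argument componentwise.
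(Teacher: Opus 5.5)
Your overall framework is the paper's: take Corollary~\ref{C:config_5} (and the lemmas feeding it) as an invariant, then rule out each critical configuration of Figure~\ref{crit_config_trees}. Your treatment of \configuration{1b}, \configuration{2a} and \configuration{2c} via \configuration{5} is fine. The trouble is \configuration{1a} and \configuration{2b}. There, two errors manufacture a residual case that does not exist, and you then try to handle it with a timing argument that is only sketched.

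\textbf{First error.} \configuration{1a} is not merely a configuration with ``at least one'' majority colored child. Both children of the uncolored \(v_c\) are majority colored, with the distinct colors \(2\) and \(3\). That is exactly \configuration{5}, so \configuration{1a} is excluded by Corollary~\ref{C:config_5} just like \configuration{1b}; this is what the paper does.

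\textbf{Second error.} Your degree counts compare \(m+3\) with \(\Delta(T)=4\) instead of with \(\deg(v_c)\). If \(\deg(v_c)=3\), then \(m=1\) and \(m+3=4>3\). Concretely, in \configuration{2b} the parent, one majority colored child and \(m+1=2\) children colored \(3\) already give four neighbors, so \(\deg(v_c)=3\) does not survive there, nor in \configuration{2c}. In fact \(m+3>\deg(v_c)\) for every \(\deg(v_c)\le 4\) (\(3>1\), \(4>2\), \(4>3\), \(5>4\)). Hence \configuration{2a}, \configuration{2b} and \configuration{2c} are all impossible outright by counting, which is the paper's argument.

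With these two corrections no case survives, and the theorem follows in a few lines from Corollary~\ref{C:config_5} plus counting. You should drop the timing paragraph. As written it is not a proof (``I expect to show'', ``I would handle''). Its claim that Bob's move can complete a critical configuration only at \(p(u)\) or \(p^2(u)\) is also inaccurate. If \(u\) becomes majority colored, the uncolored children of \(u\) lose a color. If \(p(u)\) becomes majority colored, the uncolored siblings of \(u\) lose a color.
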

\begin{proof}
    If the configurations shown in Figure~\ref{crit_config_trees} do not arise in the game, then Alice does not need a fourth color.
    \begin{enumerate}
        \item[1a:] By Corollary~\ref{C:config_5}, an uncolored $v_c$ with 2 majority colored children does not appear.
        \item[1b:] Same as 1a.
        \item[2a:] $m+1$ neighbors are colored and \(2\) majority colored neighbors are also there. In total we have at least $m+3$ neighbors of $v_c$. 
        If degree of $v_c$ is \(4\), then $m=\floor{4/2}=2$, and so $m+3=5$. 
        But this is a contradiction. Similarly for degree \(2\) or \(3\), $m=1$ and so $m+3=4$ which is a contradiction.
        \item[2b:] Same as 2a.
        \item[2c:] Same as 2a.
    \end{enumerate}
\end{proof}

\begin{proof}[Proof of Theorem~\ref{T:Main1}]
By Theorem~\ref{T:Main'}, Alice wins with \(3\) colors on any tree \(T\) with \(\Delta(T) \leq 4\).
Furthermore, the following modification of the strategy is easily seen to work for Alice when the game is played with Bob to move first: Alice just colors a neighbor of the first vertex colored by Bob, using a different color, and then roots the tree at the vertex that she colored. The game then continues following the strategy of Algorithm~\ref{A:Algorithm}.
By this choice, Alice never creates \configuration{3}, and all the proofs go through verbatim.

As a consequence, Theorem~\ref{T:Main'} extends to all forests with maximum degree at most \(4\): Alice just plays as per strategy on the tree where Bob played his last move.
This proves Theorem~\ref{T:Main1}.
It is also clear that the result holds if Bob starts the game on the forest \(F\).
\end{proof}

We also note that since Alice's first move on the forest \(F\) can be arbitrary chosen as per Algorithm~\ref{A:Algorithm}, the strategy extends to forests in which all but one vertex has degree at most \(4\): Alice simply colors the unique exceptional vertex (if it exists) on her first move.

\begin{corollary}
    If \(F\) is a forest in which there is a vertex \(v \in V(F)\) such that \(\deg(w) \leq 4\) for all \(w \neq v\), then \(\mu_g(F) \leq 3\), assuming Alice plays first.
\end{corollary}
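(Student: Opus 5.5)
The plan is to reuse Algorithm~\ref{A:Algorithm} unchanged on the component containing the exceptional vertex \(v\), with one modification: Alice's first move colors \(v\) and roots that component at \(v\). On every other component all degrees are at most \(4\), and the proof of Theorem~\ref{T:Main1} (in its Bob-moves-first form on a component) applies verbatim there. So the work is to check where the degree bound \(\Delta \leq 4\) is used in the chain Lemma~\ref{L:algo-works}, Corollary~\ref{C:config_3}, Lemmas~\ref{L:config_4}, \ref{L:config_5}, \ref{L:config_3}, Theorem~\ref{T:Main'}. I then show that each such use concerns only a vertex that is \emph{uncolored} at the relevant moment and is not the root.

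The first step is Lemma~\ref{L:algo-works}. The bound is used for the vertex \(w\) that Alice is about to color. Since \(w\) is uncolored, \(w \neq v\), because \(v\) was colored on move one. Moreover \(w\) has a parent, since the root is \(v\), and that parent is what makes the child-count argument work. Hence \(\deg(w) \leq 4\) and the argument goes through unchanged. Lemmas~\ref{L:config_4}, \ref{L:config_5} and \ref{L:config_3} and the corollaries are purely combinatorial about move order and do not use degrees except through Lemma~\ref{L:algo-works}, so they transfer directly.

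The second step is Theorem~\ref{T:Main'}. Every critical configuration in Figure~\ref{crit_config_trees} is centered at an uncolored vertex \(v_c\). Since \(v\) is colored from the first move, we have \(v_c \neq v\), so \(\deg(v_c) \leq 4\), and the counting in cases 2a--2c (\(m+3\) neighbours exceeding the degree) is unaffected. Cases 1a and 1b follow from Corollary~\ref{C:config_5} as before. One must also make sure that a high-degree \emph{colored} vertex cannot block anything. Majority violations caused at \(v\) would only constrain colors of its children, and this is already accounted for in the "forbidden colors at \(w\)" analysis, which counts constraints coming from the parent \(p(w)=v\) being majority colored. To handle this, I would recheck the first paragraph of Lemma~\ref{L:algo-works} when \(p(w)=v\) is saturated by its own color. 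In that case the color of \(v\) is forbidden at \(w\), and Alice is anyway required to avoid that color, so the case analysis is identical.

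The main obstacle I expect is this last point. The strategy's "color \(p(x)\) differently" and the rule \(v := p(u)\) when \(p(u)\) is majority colored could point Alice to \(p(v)\) with \(v\) the root, which has no parent. I would resolve it by noting that the root has no parent, so the branch "\(p(v)\) uncolored" never triggers there, and Alice falls to the default move. The remaining concern is that \(v\) itself being majority colored with an uncolored parent, i.e.\ \configuration{3}, is impossible for the root. Finally, the result is stated with Alice first, precisely because Bob could otherwise color \(v\) or its neighbours first, and I would not attempt that case.
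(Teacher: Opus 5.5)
Your proposal is correct and is the paper's argument: the first vertex Alice colors in Algorithm~\ref{A:Algorithm} is arbitrary, so she colors the exceptional vertex on move one and roots its component there. Your check that the bound \(\Delta \leq 4\) is only ever applied to uncolored, hence non-root, vertices is exactly the justification the paper leaves implicit; those vertices are the vertex \(w\) in Lemma~\ref{L:algo-works} and the center \(v_c\) of a critical configuration.
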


\section{Stars, paths, and complete graphs are game majority \(2\)-colorable}

We improve the upper bound for stars and paths by showing that these graphs are game majority \(2\)-colorable.

Let \(G = K_{1,n}\) be a star, where \(n \geq 2\) (since it is easy to see that Alice wins with \(\Delta(G) + 1\) colors).
Alice colors the center of \(G\) on the first move, forcing only the second color to be used on the rest of the vertices.
The strategy is similar if Bob goes first: if Bob colors the center in his first move, then the second color is forced on the remaining vertices, and if Bob colors a leaf in his first move, then Alice colors the center on the next move with the other color, and the game continues as before.

If \(G = K_n\), where \(n \geq 3\), then Alice starts by coloring any vertex, and in subsequent moves she colors any uncolored vertex with the color different from Bob's last move.
Even if Bob goes first, Alice's strategy is to color any uncolored vertex with the color different from Bob's last move.

\begin{theorem}
    For paths \(P\), \(\mu_g(P) \le 2\).
\end{theorem}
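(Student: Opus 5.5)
The plan is to first pin down exactly when a vertex of a path can become critical with two colors, and then give Alice a strategy that rules this out. This parallels the analysis of Section~\ref{S:Counterexample} and the strategy of Algorithm~\ref{A:Algorithm}.

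\textbf{Step 1: characterize critical vertices.} On a path \(P\), an endpoint \(v\) satisfies \(\floor{\deg(v)/2}=0\), so it must differ from its unique neighbor. An internal vertex may share its color with at most one of its two neighbors. By Observation~\ref{obs_checkx}, a color \(c\) is forbidden at an uncolored vertex \(v\) if and only if one of the following holds:
\begin{enumerate}
\item both neighbors of \(v\) are colored \(c\);
\item some neighbor of \(v\) is majority colored with \(c\), which covers a colored endpoint colored \(c\).
\end{enumerate}
I would then check the following.
\begin{itemize}
\item An uncolored endpoint is never critical: it has only one neighbor, so at most one color is forbidden there.
\item An uncolored internal vertex \(v\) with neighbors \(a,b\) is critical only if \(a\) and \(b\) are majority colored with distinct colors. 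Indeed, if both neighbors are colored \(1\), then the color \(2\) can only be forbidden by a neighbor majority colored \(2\), which is impossible.
\end{itemize}
So the only critical configuration is a single uncolored vertex sandwiched between \majcol{1} and \majcol{2}. Call an uncolored vertex \emph{threatened} if it has a majority colored neighbor.

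\textbf{Step 2: Alice's strategy.} Alice opens by coloring an endpoint's neighbor, or any vertex. After each Bob move at \(u\), she looks for threatened uncolored vertices, which by Observation~\ref{obs_checkx} lie within distance \(2\) of \(u\). She colors one of them with a color that is legal and does not make it majority colored. She chooses which threatened vertex to color by priority: first one threatened on both sides, then one threatened only on one side but adjacent to a short uncolored segment. If there are no threatened vertices, she colors an uncolored vertex next to a colored one, using the opposite color, so that she creates no majority colored vertex. That such a color exists follows as in Lemma~\ref{L:algo-works}: with one threat present, at least one color remains, and Observation~\ref{obs_checkx}(3) lets her avoid majority coloring her vertex.

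\textbf{Step 3: the invariant.} The invariant to maintain after each of Alice's moves is the following: no maximal uncolored segment of length \(1\) or \(2\) has threatened vertices at both of its ends with distinct threatening colors. Granting this, Bob can never produce the critical configuration of Step 1. His single move either shrinks such a segment or creates new majority colored vertices only within \(N[u]\), and Alice then repairs the damage. Two small checks support this.
\begin{itemize}
\item In a length-\(2\) segment \(\{x,y\}\) threatened by \majcol{1} at \(x\) and \majcol{2} at \(y\), Bob's forced coloring of \(x\) with \(2\) does not make \(x\) majority colored. So \(y\) keeps an available color.
\item Alice never creates a new majority colored vertex next to an uncolored vertex.
\end{itemize}

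\textbf{Main obstacle.} I expect the hardest step to be the move where Bob creates two threats at once. This happens when Bob colors \(u\) with \(c\) next to a vertex \(a\) already colored \(c\). Then both \(u\) and \(a\) become majority colored, and the uncolored vertices on both sides of the block \(au\) become threatened, both with color \(c\). Alice can neutralize only one of them. My plan is for her to color the one whose remaining uncolored segment is shorter, and then argue that the other threat has color \(c\) and sits at the end of a segment of length at least \(1\). For Bob to exploit it he would need a differently colored majority threat at the far end of that segment, and Alice's priority rule addresses that on her next move.

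The case analysis on segment lengths (\(1\), \(2\), \(\geq 3\)) and on the colors at the segment ends is where the details must be verified. Finally, the variant where Bob starts is handled the same way: Alice simply begins by responding to his move. For this reason I would present the argument in the response form throughout.
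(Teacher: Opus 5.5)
\textbf{There is a genuine gap in Step~3.} Your Step~1 characterization is correct: only an internal uncolored vertex flanked by two majority colored neighbors of distinct colors can be critical. The trouble is that your invariant does not imply that Bob cannot produce such a vertex on his next move. It only restricts vertices that are \emph{already} threatened. The real danger is a one-sided threat whose other side Bob can turn into a majority colored vertex of the other color in a single move.

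For example, let $x$ be uncolored with neighbors $a$, majority colored $1$, and $b$, colored $2$ but not majority colored. Suppose $b$'s other neighbor $b'$ is uncolored and $b'$'s other neighbor is not colored $2$. Your invariant holds, since only one side of $\{x\}$ is threatened. Yet Bob legally colors $b'$ with $2$, so $b$ becomes majority colored and $x$ is critical.

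The same happens for a segment $\{x,y\}$ where $x$ is next to a vertex majority colored $1$ and $y$ is either an uncolored endpoint or next to a vertex colored $2$ that is not yet majority colored. Bob colors $y$ with $2$. Your ``small check'' covers exactly the harmless variant: when $y$'s outer neighbor is already majority colored $2$, Bob is \emph{not allowed} to color $y$ with $2$.

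\textbf{Your resolution of the main obstacle fails for the same reason.} ``Alice's priority rule addresses that on her next move'' comes one move too late. Take an interior stretch $s_0,\dots,s_9$ of a long path in which only $s_2$ (color $2$), $s_4$ (color $1$) and $s_7$ (color $2$) are colored. Nothing is majority colored, so your invariant holds vacuously. Bob colors $s_5$ with $1$, which threatens both $s_3$ and $s_6$. Alice colors a single vertex, so she leaves one of $\{s_0,s_1,s_3\}$ and $\{s_6,s_8,s_9\}$ untouched. Bob then colors $s_1$ (resp.\ $s_8$) with $2$, and $s_3$ (resp.\ $s_6$) is critical. The ``shorter segment'' tie-break is not the relevant criterion; what matters is whether a latent threat sits on the far side.

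\textbf{The missing idea is to prevent such positions from arising at all.} After each of Alice's moves, maintain two conditions:
\begin{itemize}
\item[(i)] no uncolored internal vertex has a majority colored neighbor;
\item[(ii)] every colored vertex has a colored neighbor, except possibly an opening vertex placed next to a leaf.
\end{itemize}
Condition (i) suffices for one round. Every vertex that becomes majority colored on Bob's move receives Bob's color, so any critical vertex would already have been threatened before his move.

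Condition (ii) is what makes (i) maintainable. Under (ii), Bob's move creates at most one defect: either an isolated colored vertex, or a single threatened internal vertex adjacent to the vertex he just colored. Alice repairs it by coloring next to Bob's vertex with the opposite color. This repair creates no new threat at an internal vertex, because any vertex it turns majority colored already has its other neighbor colored (or that neighbor is the harmless leaf next to the opening vertex).

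This is precisely what the paper's rule enforces implicitly. Alice answers at the parent, otherwise the child, of Bob's vertex with the opposite color, and she opens at a neighbor of a leaf. Your strategy allows an arbitrary opening and free moves not tied to Bob's vertex, so it does not guarantee these conditions.
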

\begin{proof}
        
    The strategy for paths is a slight modification of the strategy for trees of maximum degree at most \(4\) using 2 colors.
    Alice colors a leaf neighbor in her first move and roots the path at that vertex.
    For subsequent moves, she colors the parent \(p(v)\) of the vertex \(v\) last colored by Bob; if \(p(v)\) is colored, she colors the child of the vertex \(v\). In both cases, Alice uses the opposite color to the one used by Bob on \(v\).
    If the child of \(v\) is also colored, Alice colors an uncolored vertex \(x\) closest to the root using a color different from \(p(x)\).

    Now we show that this is a winning strategy for Alice with two colors.
    Consider any arbitrary path.
    Since the degree of any vertex is at most 2, two colors can be forbidden at a given vertex \(v_c\) only if it has two different majority colored neighbors, as shown in Figure~\ref{F:paths_vc}.
    Suppose such a vertex \(v_c\) arises when Alice is playing the game using our strategy.
    
\begin{figure}[h!]
    \begin{subfigure}[b]{0.48\textwidth}
    \begin{tikzpicture}[
            level distance=8mm,
            font=\scriptsize,
            every node/.style={circle, draw, fill=white, inner sep=3pt},
            line width=0.9pt
        ]
        \node (0) at (-1,0) {...};
        \node (1) at (0,0) [fill=red!40] {\(v_1\)};
        \node (2) at (1,0) [fill=red!40] {\(v_2\)};
        \node (3) at (2,0)  {\(v_c\)};
        \node (4) at (3,0) [fill=blue!40] {\(v_3\)};
        \node (5) at (4,0) [fill=blue!40] {\(v_4\)};
        \node (6) at (5,0) {...};

        \draw (0)--(1)--(2)--(3)--(4)--(5)--(6);
    \end{tikzpicture}
    \caption{}
    \label{F:path_a}
    \end{subfigure}\qquad
    \begin{subfigure}[b]{0.35\textwidth}
    \begin{tikzpicture}[
            level distance=8mm,
            font=\scriptsize,
            every node/.style={circle, draw, fill=white, inner sep=3pt},
            line width=0.9pt
        ]
        \node (0) at (-1,0) {...};
        \node (1) at (0,0) [fill=red!40] {\(v_1\)};
        \node (2) at (1,0) [fill=red!40] {\(v_2\)};
        \node (3) at (2,0)  {\(v_c\)};
        \node (4) at (3,0) [fill=blue!40] {\(v_3\)};

        \draw (0)--(1)--(2)--(3)--(4);
    \end{tikzpicture}
    \caption{}
    \label{F:path_b}
    \end{subfigure}
    \caption{Forbidden configurations in path; \(v_1\) is nearer to the root than other \(v_is\).}
    \label{F:paths_vc}
\end{figure}

Figure~\ref{F:path_a}:
Consider \(v_2\).
Alice would have colored \(v_2\) if it was the parent of a vertex colored by Bob or the child of a vertex colored by Bob or an uncolored vertex closest to the root.
Since \(v_c\) is uncolored, the first case is not possible.
If Bob colored \(v_1\) before \(v_2\), then Alice would have colored \(v_2\) with a color opposite to that of \(v_1\), which is not desirable for the configuration.
Now, if she colored \(v_2\) because it was the closest uncolored vertex to the root, she would have colored it blue, as \(v_1\) was already red.
Moreover, \(v_2\) cannot be the leaf neighbor that Alice colored in the first move.
If it is, then either \(v_c\) or \(v_1\) has to be a leaf.
But \(v_c\) cannot be leaf as it has degree 2 and if \(v_1\) is a leaf then it cannot have same color as \(v_2\).

So Bob should have colored \(v_2\).
When Bob colored \(v_2\) red, if \(v_1\) was uncolored, Alice would have colored it blue.
If \(v_1\) was colored, Alice would have chosen to color \(v_c\).
If \(v_3\) was not blue, then Alice would have colored \(v_c\) with blue and the configuration does not arise.

Now suppose \(v_3\) was already blue, this means that \(v_3\) got colored before \(v_2\).
Since \(v_c\) is uncolored, \(v_3\) is not the uncolored vertex closest to the root.
So, Alice could have colored \(v_3\) if either \(v_c\) or \(v_4\) was colored red by Bob which is not the case.
So Bob should have colored \(v_3\).
If Bob colored \(v_3\) with blue, then since \(v_2\) was not colored red yet, red was a legal color for \(v_c\) and Alice would have already colored \(v_c\).
Thus the configuration never arises.

Figure~\ref{F:path_b}: Here, \(v_3\) is a leaf.
The same argument holds and the configuration does not arise.

\end{proof}

If Bob goes first on a path, then Alice colors a neighbor of the vertex colored by Bob with the opposite color, and roots the path on the vertex she colored. The game continues with the same strategy.
Thus, we also have \(\mu_g(G) \leq 2\) for any graph that is a disjoint union of paths, stars and complete graphs.
This proves Theorem~\ref{T:Main2}.

\section{Complexity of deciding whether a given $k$-coloring can be extended to a majority coloring}\label{S:Complexity}

We now discuss the complexity results for both the pre-coloring extension problem for majority coloring, and its game version.

It is known that for all finite  graphs, the majority chromatic number is $2$ \cite{Lovasz1966}. But the question of whether it can be decided efficiently that any given partial coloring of $G$ (which does not yet violate the majority condition) can be extended to a majority coloring remains to be addressed. We show that it is in fact \NP-complete to decide whether a given partial coloring of a graph can be extended to a majority coloring. 

\defproblem{Extended Majority Coloring $(G,n,k,c^\prime)$}{Graph $G$ on $n$ vertices, a partial coloring $c^\prime$ of $G$ with $k$ colors}{Can this partial coloring of $G$ be extended to a majority coloring of $G$?}

We show that \textsc{Extended Majority Coloring} is \NP-complete, even for planar graphs of degree at most $12$. It can be seen that \textsc{Extended Majority Coloring} is in \NP because given a coloring $c$ of the graph $G$, it can be verified in polynomial time whether $c$ satisfies the majority condition, and $c(u)=c^\prime(u)$ for vertices $u$ which were assigned some color in the input partial coloring $c^\prime$. We show that \textsc{Extended Majority Coloring} is \NP-hard by a reduction from the $3$-coloring problem, which is \NP-complete, even for fixed $k=3$ \cite{Stockmeyer1973} and even on planar graphs of degree at most $4$ \cite{GareyJohnsonStockmeyer1976}.

\defproblem{$3$-coloring $(G,n,m)$}{Graph $G$ on $n$ vertices and $m$ edges}{Does $G$ have a proper coloring with $3$ colors?}

From a given instance of \textsc{$3-$coloring}, we describe how to create an equivalent instance of \textsc{Extended Majority Coloring} in polynomial time. We denote this new instance by $(H,n+3m,6m)$. For each vertex $v\in G$, create a copy of $v$, denoted by $v^\prime$ in $V(H)$. Denote this set of vertices created by $V^\prime=\{v^\prime|v\in V(G)\}$. Now, for each edge $uv\in E(G)$, add three length-two paths from $u^\prime$ to $v^\prime$ in $H$. Denote the intermediate vertices of these paths by $w^i_{uv}$, for $i\in [1,3]$. That is, for each edge $uv$ in $G$, there exist three length-two paths $u^\prime w^1_{uv}v^\prime, u^\prime w^2_{uv}v^\prime, u^\prime w^3_{uv}v^\prime$ in $H$. Now for each $uv\in E(G)$, assign color $1$ to $w^1_{uv}$, color $2$ to $w^2_{uv}$ and color $3$ to $w^3_{uv}$. That is, corresponding to each edge $uv$ of $G$, there are now three length-two paths from $u^\prime$ to $v^\prime$ in $H$, with the central vertex of each path colored with each color from $[1,3]$, as shown in \Cref{fig:constructionone}. Denote the set of vertices $\bigcup_{uv\in E(G),i\in[1,3]}\{w^i_{uv}\}$ by $W$. Note that $V(H)=V^\prime \cup W$. The construction of the reduced instance is now complete. Note that the graph $H$ has $n+3m$ vertices (out of which $3m$ vertices are colored) and $6m$ edges; and the graph $H$ can be constructed in polynomial time, where the input is the graph $G$. If $G$ is a planar graph of degree at most $4$, it can be seen that $H$ is a planar graph of degree at most $12$.

\begin{lemma}\label{lem:Extended3coloring}
 $G$ can be properly colored with $3$ colors if and only if the given partial coloring of $H$ can be extended to a majority coloring of $H$.   
\end{lemma}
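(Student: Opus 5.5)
The plan is to check both directions of the equivalence directly from the local structure of \(H\). Two degree facts drive everything. Each vertex \(w^i_{uv}\in W\) has degree exactly \(2\), with neighbours \(u^\prime\) and \(v^\prime\). Each vertex \(v^\prime\in V^\prime\) has degree \(3\deg_G(v)\), and among its neighbours exactly \(\deg_G(v)\) are pre-colored with each of the colors \(1,2,3\). Throughout, the palette is the three colors \(\{1,2,3\}\) used in the partial coloring, so any extension assigns colors from \([1,3]\) to \(V^\prime\).

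\emph{Forward direction.} Suppose \(c\) is a proper \(3\)-coloring of \(G\). Set \(c(v^\prime)=c(v)\) and keep the given colors on \(W\); we verify the majority condition at every vertex.
\begin{itemize}
\item At \(w^i_{uv}\), the threshold is \(\floor{2/2}=1\). Since \(c(u)\neq c(v)\), at most one of \(u^\prime,v^\prime\) has color \(i\), so the condition holds.
\item At \(v^\prime\) with color \(j\), exactly \(\deg_G(v)\) neighbours have color \(j\), namely the vertices \(w^j_{vx}\). Since \(\deg_G(v)\le\floor{3\deg_G(v)/2}\), the condition holds.
\item Isolated vertices of \(G\) remain isolated in \(H\), where the condition \(0\le 0\) holds trivially.
\end{itemize}

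\emph{Backward direction.} Suppose the partial coloring extends to a majority coloring \(c\) of \(H\), and define \(c(v)=c(v^\prime)\) on \(G\). Take an edge \(uv\in E(G)\) and suppose \(c(u^\prime)=c(v^\prime)=i\). Then the pre-colored vertex \(w^i_{uv}\), which has color \(i\), has both of its \(2\) neighbours colored \(i\). This exceeds \(\floor{2/2}=1\), a contradiction. Hence \(c(u)\neq c(v)\) for every edge, and since \(c(v^\prime)\in\{1,2,3\}\), this is a proper \(3\)-coloring of \(G\).

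The key point is that every possible common color \(i\) of \(u^\prime\) and \(v^\prime\) is blocked by its own gadget vertex \(w^i_{uv}\). This is why three paths per edge are needed and why the palette must be exactly the three colors. The only real obstacle is making sure the palette convention (\(k=3\)) is stated explicitly, so that an extension cannot use a fourth color on \(V^\prime\). The degree bound at \(v^\prime\) is routine.
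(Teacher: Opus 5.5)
Your proof is correct and follows the paper's argument essentially step for step. In the forward direction, properness handles the degree-\(2\) gadget vertices and the one-third color balance handles the vertices of \(V^\prime\). In the backward direction, \(w^i_{uv}\) rules out a common color \(i\) on \(u^\prime\) and \(v^\prime\). Your explicit remarks on isolated vertices and on restricting the palette to \(\{1,2,3\}\) are small clarifications that the paper leaves implicit.
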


Due to space constraints, we move the proof of \Cref{lem:Extended3coloring} to the Appendix. Using this and the fact that the construction of the reduced instance takes polynomial time, we have the following result.

\begin{theorem}\label{extmajcolorthree}
    The \textsc{Extended Majority Coloring} problem is \NP-complete, even when $k=3$ and the input graph is planar with degree at most $12$.
\end{theorem}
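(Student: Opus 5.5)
The plan is to combine membership in \NP, which was already argued above, with the reduction from \textsc{$3$-coloring} on planar graphs of maximum degree at most $4$~\cite{GareyJohnsonStockmeyer1976}, using \Cref{lem:Extended3coloring} for correctness. Membership is immediate. A certificate is a full coloring $c$, and in polynomial time we check two things: that $c$ agrees with $c'$ on the precolored vertices, and that every vertex $v$ has at most $\floor{d(v)/2}$ neighbors of its own color.

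For hardness, we are given a planar graph $G$ with $\Delta(G)\le 4$, and we build $H$ together with the partial coloring of $W$ as described. The construction has the following properties.
\begin{itemize}
\item It uses only the colors $\{1,2,3\}$, so $k=3$.
\item It has $n+3m$ vertices and $6m$ edges, so it can be built in polynomial time.
\item $H$ is planar. It is obtained from a plane embedding of $G$ by replacing each edge $uv$ with three internally disjoint $u'$--$v'$ paths of length two. These paths can be drawn within a thin neighborhood of the original edge curve, so no crossings appear.
\item The maximum degree is at most $12$. Each $w^i_{uv}$ has degree $2$, and each $v'$ has degree $3\deg_G(v)\le 12$.
\end{itemize}

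Correctness is exactly \Cref{lem:Extended3coloring}: $G$ is $3$-colorable if and only if the precoloring of $H$ extends to a majority coloring. I would still sketch why it holds, since that is the step with real content. Each $w^i_{uv}$ has degree $2$, so it may share its color with at most one of its neighbors $u'$ and $v'$.

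For the forward direction, suppose $c$ is a proper $3$-coloring of $G$, and set $c(v')=c(v)$.
\begin{itemize}
\item At a middle vertex $w^i_{uv}$, at most one of $u',v'$ has color $i$, because $c(u)\ne c(v)$.
\item At a vertex $v'$, the neighbors of color $c(v)$ are exactly the vertices $w^{c(v)}_{uv}$, one per incident edge. That gives $\deg_G(v)$ such neighbors, which is at most $\floor{3\deg_G(v)/2}$.
\end{itemize}

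For the converse, take an extension and read the colors of the vertices $v'$ as a coloring of $G$. If some edge $uv$ had $c(u')=c(v')=i$, then $w^i_{uv}$ would have both of its neighbors colored $i$, which violates the majority condition at $w^i_{uv}$. Hence the coloring of $G$ is proper.

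Since the reduction is polynomial and preserves planarity and the degree bound, \textsc{Extended Majority Coloring} is \NP-hard on planar graphs of maximum degree at most $12$ with $k=3$, and hence \NP-complete. The only point needing care is the planarity of the gadget replacement, and the local redrawing argument above handles it.
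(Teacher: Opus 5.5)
Your proposal is correct and follows the paper's argument essentially verbatim. You prove membership in \NP{} by certificate checking, and hardness by the same reduction from \textsc{$3$-coloring} on planar graphs of maximum degree $4$, with correctness from \Cref{lem:Extended3coloring}; your sketch of both directions matches the appendix proof, and your thin-neighborhood redrawing argument supplies the planarity step that the paper only asserts with ``it can be seen''.
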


Now we show that the above result holds even when $k=2$. Both of these results also serve as a warm up for when we discuss the complexity of the game version of this problem. We first recall the $3$-SAT problem, which is well known to be \NP-complete \cite{Karp1972}. 

\defproblem{$3$-SAT}{A Boolean formula $\Phi$ in CNF with $m$ clauses and $n$ variables, and each clause containing $3$ literals.}{Does there exist an assignment of each variable $x_i$ to $T/F$ such that $\Phi$ is satisfied?}

Although a majority coloring of a graph with $2$ colors can be decided in polynomial time, it turns out that it is \NP-complete to determine whether a given partial coloring of a graph with $2$ colors can be extended to a majority coloring of the graph. We state the problem formally as follows:
\defproblem{Extended Majority $2$-Coloring}{$(G,n,c^\prime)$}{Can the given partial $2$-coloring $c^\prime$ be extended to a majority coloring of the graph $G$, using only $2$ colors?}

It can be observed that the \textsc{Extended Majority $2$-Coloring} problem is in \NP because given a coloring $c$ it can be efficiently verified whether $c$ is a majority coloring and $c(u)=c^\prime(u)$ on the vertices $u\in V(G)$ which were assigned a color in the partial coloring $c^\prime$. For showing that \textsc{Extended Majority $2$-Coloring} is \NP-hard, we describe a reduction of the problem from \textsc{$3$-SAT}.  

Now we describe how to construct an equivalent instance of \textsc{Extended Majority $2$-Coloring} in polynomial time. For each clause $C_j$ ($j\in [1,m]$), create a vertex in the graph $G$. We abuse notation and label the vertex corresponding to the clause $C_j$ as $C_j$. We call these vertices as \emph{clause vertices}. Add two pendant (degree one) vertices adjacent to each $C_j$ and label these vertices adjacent to $C_j$ as $D^1_j$ and $D^2_j$. Denote the union of the clause vertices as $C$ and the union of the corresponding pendant vertices as $D$. Assign the color blue to each $C_j$ from $C$ and the color red to all the $2m$ vertices in $D$. Now for each variable $x_i$ ($i\in [1,n]$), create a pair of vertices in the graph $G$ and label them $x_i$ and $\bar{x_i}$ (abusing notation). We call these vertices as \emph{variable vertices} and label their union as $X$. For each variable vertex $y_i$ (that is $x_i$ or $\bar{x_i}$), if $y_i$ appears in $k$ clauses, we add $k$ vertices adjacent to $y_i$. We assign red color to all these newly added degree one vertices and denote their union by $R^\prime$. For each vertex $r^\prime\in R^\prime$, we create two vertices which are adjacent to $r^\prime$ and color these two vertices blue. We denote the set of these vertices which we just added as $B^\prime$. Now for each pair of vertices in $B^\prime$ which are adjacent to the same vertex in $R^\prime$, we create one vertex adjacent to both these vertices and add these new red vertices to $R^\prime$. The union of the vertices in $R^\prime$ and $B^\prime$ induces disjoint four-cycles with the non-adjacent vertices in each cycle having the same color. Whenever a literal $y_i$ ($x_i$ or $\bar{x_i}$) appears in the clause $C_j$ in $\Phi$, we add an edge in $G$ from the variable vertex $y_i$ to the clause vertex $C_j$. Finally, for each pair of variable vertices $x_i$ and $\bar{x_i}$ (corresponding to the same variable and its complement), add a vertex $r_i$ adjacent to both $x_i$ and $\bar{x_i}$ and color it red. Similarly, add a vertex $b_i$ adjacent to both $x_i$ and $\bar{x_i}$ and color it blue. Observe that the graph induced by the union of $X$, $R=\cup_{i=1}^{n}\{r_i\}$ and $B=\cup_{i=1}^{n}\{b_i\}$ is the disjoint union of $n$ cycles of length $4$, with a pair of non-adjacent vertices of each cycle colored red and blue. Note that $V(G)=C\cup D\cup X\cup R^\prime\cup B^\prime\cup R\cup B$ and the only uncolored vertices in the graph $G$ are all the vertices of $X$. Observe that the graph $G$ is bipartite with the bi-partition $(C \cup R^\prime \cup R \cup B) \uplus (X\cup D \cup B^\prime)$. The description of the reduced instance is now complete, and it can be seen that the reduced instance can be constructed in polynomial time.

\begin{figure}
    \centering
   \begin{tikzpicture}[scale=0.8]
       \foreach \x in {0,3,...,12}
         \draw (\x,0) circle (0.3 cm);
    \foreach \x in {0,1,...,4}
      {\pgfmathtruncatemacro{\z}{\x+1}
      \pgfmathtruncatemacro{\y}{3*\x}
      \node at (\y,0){$x_\z$};}
    \foreach \x in {0,3,...,12}
         \draw (\x+2,0) circle (0.3 cm);
    \foreach \x in {0,1,...,4}
      {\pgfmathtruncatemacro{\z}{\x+1}
      \pgfmathtruncatemacro{\y}{3*\x}
      \node at (\y+2,0){$\bar{x_\z}$};}
    \foreach \x in {0,3,...,12}
        {\draw(\x+0.2,0.2)--(\x+0.7,0.7);
        \draw(\x+0.2,-0.2)--(\x+0.7,-0.7);
        \draw(\x+1.8,0.2)--(\x+1.3,0.7);
        \draw(\x+1.8,-0.2)--(\x+1.3,-0.7);
        }
    \foreach \x in {0,3,...,12}
         {\draw[fill=IndianRed!80] (\x+1,0.7) circle (0.3 cm);
         \draw[fill=DodgerBlue!70](\x+1,-0.7) circle (0.3 cm);
         }
    \foreach \x in {0,1,...,4}
      {\pgfmathtruncatemacro{\z}{\x+1}
      \pgfmathtruncatemacro{\y}{3*\x}
      \node at (\y+1,0.7){$r_\z$};
      \node at (\y+1,-0.7){$b_\z$};}

    \draw[fill=IndianRed!80](0,-2) circle (0.2 cm);
    \draw[fill=DodgerBlue!70](-0.5,-2.5) circle (0.2 cm);
    \draw[fill=DodgerBlue!70](0.5,-2.5) circle (0.2 cm);
     \draw[fill=IndianRed!80](0,-3) circle (0.2 cm);
    \draw(-0.35,-2.4)--(-0.15,-2.1);
    \draw(0.35,-2.4)--(0.15,-2.1);
     \draw(-0.35,-2.6)--(-0.15,-2.9);
    \draw(0.35,-2.6)--(0.15,-2.9);

    \draw[fill=IndianRed!80](2,-2) circle (0.2 cm);
    \draw[fill=DodgerBlue!70](1.5,-2.5) circle (0.2 cm);
    \draw[fill=DodgerBlue!70](2.5,-2.5) circle (0.2 cm);
     \draw[fill=IndianRed!80](2,-3) circle (0.2 cm);
    \draw(1.65,-2.4)--(1.85,-2.1);
    \draw(2.35,-2.4)--(2.15,-2.1);
    \draw(1.65,-2.6)--(1.85,-2.9);
    \draw(2.35,-2.6)--(2.15,-2.9);

    \draw[fill=IndianRed!80](4,-2) circle (0.2 cm);
    \draw[fill=DodgerBlue!70](3.5,-2.5) circle (0.2 cm);
    \draw[fill=DodgerBlue!70](4.5,-2.5) circle (0.2 cm);
     \draw[fill=IndianRed!80](4,-3) circle (0.2 cm);
    \draw(3.65,-2.4)--(3.85,-2.1);
    \draw(4.35,-2.4)--(4.15,-2.1);
     \draw(3.65,-2.6)--(3.85,-2.9);
    \draw(4.35,-2.6)--(4.15,-2.9);

    \draw[fill=IndianRed!80](7,-2) circle (0.2 cm);
    \draw[fill=DodgerBlue!70](6.5,-2.5) circle (0.2 cm);
    \draw[fill=DodgerBlue!70](7.5,-2.5) circle (0.2 cm);
     \draw[fill=IndianRed!80](7,-3) circle (0.2 cm);
    \draw(6.65,-2.4)--(6.85,-2.1);
    \draw(7.35,-2.4)--(7.15,-2.1);
     \draw(6.65,-2.6)--(6.85,-2.9);
    \draw(7.35,-2.6)--(7.15,-2.9);

     \draw[fill=IndianRed!80](10,-2) circle (0.2 cm);
    \draw[fill=DodgerBlue!70](9.5,-2.5) circle (0.2 cm);
    \draw[fill=DodgerBlue!70](10.5,-2.5) circle (0.2 cm);
     \draw[fill=IndianRed!80](10,-3) circle (0.2 cm);
    \draw(9.65,-2.4)--(9.85,-2.1);
    \draw(10.35,-2.4)--(10.15,-2.1);
     \draw(9.65,-2.6)--(9.85,-2.9);
    \draw(10.35,-2.6)--(10.15,-2.9);

     \draw[fill=IndianRed!80](13,-2) circle (0.2 cm);
    \draw[fill=DodgerBlue!70](12.5,-2.5) circle (0.2 cm);
    \draw[fill=DodgerBlue!70](13.5,-2.5) circle (0.2 cm);
    \draw[fill=IndianRed!80](13,-3) circle (0.2 cm);
    \draw(12.65,-2.4)--(12.85,-2.1);
    \draw(13.35,-2.4)--(13.15,-2.1);
    \draw(12.65,-2.6)--(12.85,-2.9);
     \draw(13.35,-2.6)--(13.15,-2.9);
    
    \draw(0,-0.3)--(0,-1.8);
    \draw(2,-0.3)--(2,-1.8);
    \draw(3,-0.3)--(3.9,-1.8);
    \draw(6,-0.3)--(6.9,-1.8);
    \draw(9,-0.3)--(9.9,-1.8);
    \draw(12,-0.3)--(12.9,-1.8);

    \draw[fill=DodgerBlue!70](2.5,4) circle (0.5 cm);
    \node at (2.5,4){$C_1$};

    \draw[fill=DodgerBlue!70](10.5,4) circle (0.5 cm);
    \node at (10.5,4){$C_2$};

    \draw[fill=IndianRed!80](1,5.5) circle (0.5 cm);
    \node at (1,5.5){$D^1_1$};
    \draw[fill=IndianRed!80](4,5.5) circle (0.5 cm);
    \node at (4,5.5){$D^2_1$};
    \draw[fill=IndianRed!80](9,5.5) circle (0.5 cm);
    \node at (9,5.5){$D^1_2$};
    \draw[fill=IndianRed!80](12,5.5) circle (0.5 cm);
    \node at (12,5.5){$D^2_2$};

    \draw(2.15,4.35)--(1.35,5.15);
    \draw(10.15,4.35)--(9.35,5.15);
    \draw(2.85,4.35)--(3.65,5.15);
    \draw(10.85,4.35)--(11.65,5.15);

    \draw(0,0.3)--(2.15,3.65);
    \draw(3,0.3)--(2.55,3.55);
    \draw(6,0.3)--(2.8,3.65);
    \draw(2,0.3)--(10,4);
    \draw(9,0.3)--(10.1,3.7);
    \draw(12,0.3)--(10.9,3.7);
    
   \end{tikzpicture}
    \caption{Construction of a reduced instance of Extended Majority Coloring from a given instance of $3$-SAT: $\Phi=C_1\land C_2$ where $C_1=x_1\lor x_2 \lor x_3$ and $C_2=\bar{x_1}\lor x_4 \lor x_5$. The literals $x_1,\bar{x_1},x_2,x_3,x_4,x_5$ appear in exactly one clause and hence the corresponding vertices are adjacent to one red vertex of a $4$-cycle. Note that a satisfying assignment could be setting all the variables to True and the corresponding majority coloring would be to color all the vertices $x_i$ red (for $i\in[1,5]$), and color all the vertices $\bar{x_i}$ blue (for $i\in[1,5]$).}
    \label{fig:reductionsat}
\end{figure}

Note that the majority condition is always satisfied at the vertices in $D$, because the vertices in $D$ are red and have exactly one neighbor, which is blue. Similarly, each vertex in $R^\prime$ is colored red, has at most one uncolored neighbor and two blue neighbors, therefore, the majority condition will not be violated at a vertex in $R^\prime$ whether its neighbor is colored red or blue. The vertices in $B^\prime$ are blue and have two red neighbors, hence they satisfy the majority condition. 

\begin{lemma}\label{lem:Extended2coloring}
   It is possible to extend the given coloring of $G$ to a majority coloring of $G$ if and only if $\Phi$ has a satisfying assignment.
\end{lemma}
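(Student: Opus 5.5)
The plan is to show that the only vertices whose majority condition actually depends on the extension are $r_i$, $b_i$ and the clause vertices $C_j$. Their constraints will translate exactly into ``$x_i$ and $\bar{x_i}$ receive different colors'' and ``each clause has a red literal''. Then red will be read as True. The remarks just before the lemma already dispose of $D$, $R^\prime$ and $B^\prime$: those vertices satisfy the majority condition for every choice of colors on $X$.

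\textbf{Step 1: the variable vertices never violate the condition.} Fix a literal vertex $y_i$ that appears in $k$ clauses. I assume, as is standard for 3-SAT, that the literals within a clause are distinct. Then $y_i$ has exactly $2k+2$ neighbors: $r_i$ (red), $b_i$ (blue), $k$ red pendant-type vertices from $R^\prime$, and $k$ blue clause vertices. Hence $\floor{\deg(y_i)/2}=k+1$. If $y_i$ is red, its red neighbors are $r_i$ and the $k$ vertices of $R^\prime$, giving $k+1$. If $y_i$ is blue, its blue neighbors are $b_i$ and the $k$ clause vertices, again $k+1$. The vertices of $X$ are pairwise nonadjacent, so these counts do not depend on the rest of the extension, and $y_i$ is always fine.

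\textbf{Step 2: the constraints at $r_i$ and $b_i$.} The vertex $r_i$ is red of degree $2$, so at most one of $x_i,\bar{x_i}$ may be red. Symmetrically, $b_i$ forces at most one of them to be blue. Together these say that exactly one of $x_i,\bar{x_i}$ is red.

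\textbf{Step 3: the constraint at $C_j$.} The vertex $C_j$ is blue and has degree $5$: the two red vertices $D^1_j,D^2_j$ and its three literal vertices. Its majority condition therefore allows at most $2$ blue neighbors. Equivalently, at least one of its three literal vertices must be red.

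\textbf{Step 4: the two directions.} Given a satisfying assignment, color $y_i$ red exactly when the literal $y_i$ is true. Step 2 holds because exactly one of $x_i,\bar{x_i}$ is true. Step 3 holds because each clause contains a true literal. Combined with Step 1 and the earlier remarks, this coloring is a majority coloring. Conversely, given a majority extension, set $x_i$ to True if and only if the vertex $x_i$ is red. Step 2 makes this a consistent assignment with $\bar{x_i}$ true exactly when the vertex $\bar{x_i}$ is red. Step 3 then shows that every clause contains a true literal.

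\textbf{Main obstacle.} The only delicate point is the degree count in Step 1. It relies on the literal vertex $y_i$ having exactly as many $R^\prime$-neighbors as clause-neighbors, which in turn needs the distinct-literals assumption so that there are no multi-edges. I would state that assumption explicitly. If it fails, repeated literals in a clause can first be merged, which gives an equivalent instance.
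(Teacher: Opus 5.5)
Your proof is correct and follows essentially the same route as the paper. True literals are colored red; $r_i$ and $b_i$ force $x_i,\bar{x_i}$ to get opposite colors; and the blue degree-$5$ vertex $C_j$ tolerates at most two blue neighbors, so each clause needs a red (true) literal. Your Step~1 is in fact more accurate than the paper's text, which omits $r_i,b_i$ and claims that $y_i$ has fewer than half its neighbors of its own color, whereas the true count is exactly $k+1=\floor{\deg(y_i)/2}$.

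One small correction to your closing remark. The distinct-literals hypothesis is really needed at $C_j$ in Step~3; with $k$ counting clauses, the count in Step~1 holds regardless. Also, merging repeated literals does not rescue the reduction, because a shortened clause with only two pendants is no longer enforced. For example, $(x\lor x\lor x)\land(\bar{x}\lor\bar{x}\lor\bar{x})$ is unsatisfiable, yet coloring $x$ red and $\bar{x}$ blue extends to a majority coloring. So you should simply assume three distinct literals per clause, as the paper implicitly does.
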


We move the proof of this lemma to the appendix. Using this lemma and the fact that the reduction takes polynomial time, we have the following result.

\begin{theorem}\label{thm:Extended2coloring}
    The \textsc{Extended Majority $2$-Coloring} problem is \NP-complete, even on bipartite graphs. 
\end{theorem}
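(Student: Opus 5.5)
The plan is to obtain \Cref{thm:Extended2coloring} from \Cref{lem:Extended2coloring} together with the two observations already made: membership in \NP, and polynomial-time constructibility and bipartiteness of the reduced instance.

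Membership in \NP{} is immediate. A certificate is a full $2$-coloring $c$ of $G$. We check in time $O(|V|+|E|)$ that $c$ agrees with $c^\prime$ on precolored vertices and that every vertex has at most $\floor{d(v)/2}$ neighbors of its own color.

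For \NP-hardness we reduce from \textsc{$3$-SAT} using the construction above. The first step is to confirm that the construction is polynomial. The graph has $m$ clause vertices, $2m$ pendants $D$, and $2n$ variable vertices. It also has $n$ red/blue pairs $r_i,b_i$, plus one $4$-cycle gadget (two red vertices and two blue vertices) for each occurrence of a literal in a clause. In total this is $O(n+m)$ vertices and edges, and the precoloring is written down directly.

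Next, confirm bipartiteness via the stated bipartition $(C\cup R^\prime\cup R\cup B)\uplus(X\cup D\cup B^\prime)$. This means checking every edge type: $C$--$D$, $C$--$X$, $X$--$R^\prime$ (the attaching red vertex of a gadget), $R^\prime$--$B^\prime$ inside each $4$-cycle, and $X$--$R$ and $X$--$B$. Each of these edges crosses the partition.

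Then we invoke \Cref{lem:Extended2coloring}: $\Phi$ is satisfiable if and only if the precoloring extends to a majority $2$-coloring. The reduction is therefore correct, and NP-completeness on bipartite graphs follows.

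The substantive content sits entirely inside \Cref{lem:Extended2coloring}, which the paper proves in the appendix. If I had to reprove it, the intended argument would be the following.

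The gadget $x_i,\bar{x_i},r_i,b_i$ forces $x_i$ and $\bar{x_i}$ to receive different colors. Each attaching red vertex is neutral, as already observed.

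Each clause vertex $C_j$ is blue and has two red pendants and three literal neighbors, so its degree is $5$. It may therefore have at most $2$ blue neighbors, which means at least one of its literals must be colored red. Reading ``red'' as true, this is exactly the clause constraint.

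The variable vertices need one check. Take a literal vertex $y$ appearing in $k$ clauses. Its degree is $2k+2$: it is adjacent to $k$ clause vertices (blue), $k$ gadget vertices (red), and the pair $r_i,b_i$. Whichever color $y$ takes, exactly $k+1$ neighbors share it, which is within the bound $\floor{(2k+2)/2}=k+1$. So variable vertices never fail.

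This degree verification at the variable vertices is the step I would double-check most carefully, since it is what makes the two directions of the equivalence go through.
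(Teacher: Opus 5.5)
Your proposal is correct and follows the paper's route: \NP{} membership by certificate checking, the same \textsc{$3$-SAT} reduction with polynomial size and the stated bipartition, and correctness via \Cref{lem:Extended2coloring}, whose gadget arguments you reconstruct faithfully. Your count at a literal vertex is more precise than the appendix's ``less than half'': once $r_i$ and $b_i$ are included, exactly $k+1$ of its $2k+2$ neighbors share its color, which is still allowed. This check is only needed in the direction from a satisfying assignment to an extension, not in both directions.
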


\section{Complexity of the decision version of the majority game chromatic number with a given fixed number of colors}
In this section, we show that it is \PSPACE-complete to determine if Alice has a winning strategy with $k\geq2$ colors for a given partially colored graph $G$ in the Majority Coloring Game. We formally define the decision version of the problem in the Appendix.

It can be seen that the \textsc{Extended Majority Game Chromatic Number} is in \PSPACE because the maximum number of moves that the game can last for is $n$ (that is, it is polynomially bounded by the input size), hence, the game tree is of polynomial depth. Also, in each turn there are at most $k\cdot n$ possible moves. 

Now, we show that the game is \PSPACE hard by a reduction from the Game Coloring Problem $2$ which was shown to be \PSPACE-hard in \cite{CostaPessoaEtAl2020}. In this problem, there is a given (uncolored) graph $G$ and an integer $k$. In each player's turn, they pick an uncolored vertex and assign it a color which is not assigned to any of its neighbors. The first player (Alice) wins if the entire graph is colored at the end and the second player (Bob) wins otherwise (i.e., if there is some vertex such that it cannot be assigned any color according to the rule). The decision version (formally stated in the Appendix) asks whether Alice has a winning strategy in this game, and it is referred to as the Game Coloring Problem (2) by the authors in \cite{CostaPessoaEtAl2020}.

From a given instance of the \textsc{Game Coloring Problem 2}, we construct an equivalent instance of \textsc{Extended Majority Game Chromatic Number} in polynomial time. The construction is identical to the one in the reduction from proper $3$-coloring, i.e., \Cref{extmajcolorthree}, except instead of adding three paths of length two between the copies in $H$ of each pair of adjacent vertices in $G$, we add $k$ length two paths and label the $k$ intermediate vertices as $w^{1}_{uv},w^{2}_{uv},\ldots,w^{k}_{uv}$. We assign the color $i$ to each $w^{i}_{uv}$ for each $i\in[1,k]$. We denote this new instance by $(H,n+mk,k)$ where $n$ is the number of vertices and $m$ is the number of edges of the given input graph $G$, $k$ is the number of colors in the given instance of the \textsc{Game Coloring Problem 2}.The graph $H$ can be constructed in polynomial time, where the input is the graph $G$.

Now we show that Alice has a winning strategy in the Game Coloring Problem (2) on $G$ if and only if she has a winning strategy in the Majority Coloring Game on $H$. The proof of this is moved to the appendix. Using this and the fact that the construction of the reduced instance takes polynomial time, we get a main result of this section.

\begin{theorem}\label{thm:pspacekcolors}
    The \textsc{Extended Majority Game Chromatic Number $(G,n,k)$} is \PSPACE-complete.
\end{theorem}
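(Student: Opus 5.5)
Membership in \PSPACE is the standard argument already sketched: the game lasts at most $n$ moves, each position offers at most $kn$ options, and a depth-first evaluation of the game tree uses polynomial space. The work is in hardness. I would reduce from the Game Coloring Problem (2) of~\cite{CostaPessoaEtAl2020} using the construction $H$ just described. Each edge $uv$ of $G$ becomes $k$ two-paths $u'w^i_{uv}v'$, and $w^i_{uv}$ is precoloured $i$. The goal is to show that Alice wins the proper $k$-colouring game on $G$ if and only if she wins the majority game on $H$ with the given precolouring. The plan is to show that the two games are move-for-move identical, via the bijection $v\leftrightarrow v'$.

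The key local lemma concerns the majority condition at a precoloured $w^i_{uv}$, which has degree $2$ and so tolerates at most one neighbour of colour $i$. Hence the constraint at $w^i_{uv}$ is exactly ``$u'$ and $v'$ are not both coloured $i$''. Ranging over all $i\in[1,k]$, this says $c(u')\neq c(v')$, which is precisely properness on the edge $uv$.

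I would then check that the condition at $u'$ itself is vacuous. The neighbours of $u'$ are only $w$-vertices: $k$ of them for each incident edge, one of each colour. So exactly $\deg_G(u)$ of its $k\deg_G(u)$ neighbours have any given colour. For $k\geq 2$ we have $\deg_G(u)\le \lfloor k\deg_G(u)/2\rfloor$, so the condition holds for every choice of colour at $u'$. Consequently the legal moves on $H$ are exactly the colourings of $u'$ with a colour not currently used on any $v'$ with $uv\in E(G)$. This matches a legal move on $G$.

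Moreover, the initial precolouring is valid, since every $w$ has uncoloured neighbours. The uncoloured vertices of $H$ are exactly $V'$, so the game ends (all coloured, or a stuck vertex) at the same moment in both games. Alice's and Bob's winning strategies therefore transfer verbatim in both directions. The construction has $n+mk$ vertices and is polynomial, since $k\le n$ may be assumed. The main obstacle I anticipate is only the bookkeeping: ensuring the rule ``a move may not violate the majority condition at any vertex, including already coloured ones'' matches the paper's game definition. The two-neighbour count at each $w$ handles this, as the lemma above shows.
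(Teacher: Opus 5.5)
Your proposal is correct and follows the paper's route: the same reduction from Game Coloring Problem (2) via $k$ precoloured two-paths per edge, resting on the same two local facts. The first is that the condition at $v'$ is vacuous, because $\deg_G(v)\le\lfloor k\deg_G(v)/2\rfloor$ for $k\ge 2$; the second is that the condition at $w^j_{uv}$ says exactly that $u'$ and $v'$ are not both coloured $j$. The paper proves the two directions as separate strategy-transfer lemmas, whereas you package them as one move-for-move correspondence of legal positions, which is slightly cleaner, and your remark that $k\le n$ may be assumed (so $H$ has polynomial size) covers a point the paper leaves implicit.
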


Since according to \cite{CostaPessoaEtAl2020} the Game Coloring Problem (3) is also \PSPACE-complete (i.e., Game Coloring Problem (2) when $k=\chi(G)$, the chromatic number of $G$), the \textsc{Extended Majority Game Chromatic Number $(G,n,k)$} is also \PSPACE-complete when $k=\chi(G)$.

\begin{corollary}
     The \textsc{Extended Majority Game Chromatic Number $(G,n,k)$} is \PSPACE-complete, even when $k=\chi(G)$.
\end{corollary}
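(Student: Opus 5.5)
The corollary follows by specialising the reduction behind \Cref{thm:pspacekcolors} to the case $k=\chi(G)$. The plan is to reuse that reduction verbatim, with a different source problem. Membership in \PSPACE is unchanged: the game lasts at most $n$ moves, each position has at most $kn$ legal moves, and so a depth-first evaluation of the game tree uses polynomial space regardless of the value of $k$.

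For hardness, I would start from the Game Coloring Problem (3) of \cite{CostaPessoaEtAl2020}. This is the restriction of the Game Coloring Problem (2) to instances in which the number of colors equals the chromatic number of the input graph, and it is \PSPACE-complete. Given such an instance $(G,k)$ with $k=\chi(G)$, I would build the same graph $H$ as in the proof of \Cref{thm:pspacekcolors}. For each edge $uv$ of $G$, add $k$ length-two paths between $u'$ and $v'$, and precolor the middle vertex $w^i_{uv}$ with color $i$. The equivalence of Alice winning the proper coloring game on $G$ and Alice winning the majority coloring game on $H$ is exactly the statement already proved (in the appendix) for arbitrary $k$, so it applies in particular when $k=\chi(G)$.

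The one point needing care is the meaning of ``$k=\chi(G)$'' in the target problem. If it refers to the chromatic number of the source graph $G$, then the statement follows directly from the argument above. If it means $k=\chi(H)$ for the constructed instance, I would add one check, and I expect this to be the main obstacle. The graph $H$ is bipartite with parts $V'$ and $W$, so $\chi(H)=2$ whenever $G$ has an edge. That matches $k$ only when $k=2$, i.e.\ when $G$ is bipartite, and the Game Coloring Problem with $2$ colors on bipartite graphs is unlikely to be hard. In that reading I would fall back on the remark in the text: the parameter $k$ of the produced instance is, by construction, the chromatic number of the source instance, and that is the form in which the corollary should be read.
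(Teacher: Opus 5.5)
Your argument is the paper's own: you reduce from the Game Coloring Problem (3) of \cite{CostaPessoaEtAl2020} using the same graph $H$ and the equivalence lemmas, which hold for every $k$, and the paper likewise reads $\chi(G)$ as the chromatic number of the source graph. Your worry about the literal reading is justified, since $H$ is bipartite, but rather than reinterpreting the statement you can get that reading directly from \Cref{thm:majtwocoloringgame}: its instances are bipartite graphs with at least one edge and use $k=2$, so $k=\chi(G)$ holds literally for the input graph $G$.
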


We now show that the result in \Cref{thm:pspacekcolors} also holds when $k=2$. We formally define the decision version of the problem in the Appendix.

It can be seen that the \textsc{Extended Majority $2$-Coloring Game} is in \PSPACE because the maximum number of moves that the game can last for is $n$ (that is, it is polynomially bounded by the input size), hence, the game tree is of polynomial depth. Also, in each turn there are at most $2n$ possible moves.

Now, we show that the game is \PSPACE-hard by a reduction from the \textsc{POS-CNF} game, shown to \PSPACE-complete in \cite{Schaefer1978Games}. In the \textsc{POS-CNF} game, there is a Boolean formula $\Phi$ in CNF, and all the variables appear in positive form. There are two players Alice and Bob. Alice starts the game. In their turn, each player picks a variable which is not yet assigned a value from $\{T/F\}$ and sets it to either True or False. The game ends when all the variables have been assigned some value. Alice wins if the formula $\Phi$ evaluates to True in the end and Bob wins if it evaluates to False in the end. Since all the variables appear in positive form, it is never beneficial for Alice to set a variable to False, and it is never beneficial for Bob to set a variable to True. Hence, without loss of generality, we can assume that in her chance Alice sets a variable to True, and in his chance Bob sets a variable to False. It is known that deciding whether Alice has a winning strategy in the \textsc{POS-CNF} game is \PSPACE-complete, even when each clause has $6$ literals \cite{RahmanWatson2023}. We state the problem formally in the Appendix.

Now we describe how to construct an equivalent instance of \textsc{Extended Majority $2$-Coloring Game} from a given instance of \textsc{POS-CNF} in polynomial time. For each clause $C_j$ ($j\in [1,m]$), create a vertex in the graph $G$. We abuse notation and label the vertex corresponding to the clause $C_j$ as $C_j$. We call these vertices as \emph{clause vertices}. If the clause $C_j$ has $k$ literals, add $k-1$ pendant (degree one) vertices adjacent to each $C_j$ and label these vertices adjacent to $C_j$ as $D^1_j,D^2_j,\ldots,D^{k-1}_j$. Denote the union of the clause vertices as $C$ and the union of the corresponding pendant vertices as $D$. Assign the color blue to each $C_j$ from $C$ and the color red to all the vertices (at most $5m$) in $D$.

Now for each variable $x_i$ ($i\in [1,n]$), create a pair of vertices in the graph $G$ and label them $x_i$ and $\bar{x_i}$ (abusing notation). We call the vertices $x_i$ (for $i\in [1,n]$) as \emph{variable vertices} and denote their union as $X$. We call the vertices $\bar{x_i}$ (for $i\in [1,n]$) as \emph{leaf vertices} and denote their union as $L$. The rest of the construction is the same as the construction in \Cref{thm:Extended2coloring}, except note that since $\Phi$ contains only positive literals, there are no edges between the vertices in $L$ and the vertices in $C$, also each vertex in $L$ just has one red neighbor in $R$ and one blue neighbor in $B$ (no adjacent $4$-cycle).
The description of the reduced instance is now complete and it can be seen that the reduced instance can be constructed from the given instance of \textsc{POS-CNF} in polynomial time. Observe that the constructed graph $G$ is bipartite, with the bipartition $(X\cup L\cup B^\prime \cup D)\uplus (C\cup R\cup B\cup R^\prime)$.

\begin{lemma}\label{lem:Alicewin}
    If Alice has a winning strategy in the \textsc{POS-CNF} game on $\Phi$, then she has a winning strategy in the \textsc{Extended Majority $2$-Coloring Game}.
\end{lemma}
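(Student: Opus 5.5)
The plan is to have Alice simulate her \textsc{POS-CNF} winning strategy on an \emph{imagined} game, reading ``\(x_i\) colored red'' as ``\(x_i\) set to True''.

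\textbf{Step 1: reduce the game to two constraints.} First I record which colorings of the uncolored vertices \(X\cup L\) are legal, using the degree counts of the construction.
\begin{itemize}
\item A clause vertex \(C_j\) with \(k\) literals is blue and has degree \(2k-1\). Its \(k-1\) pendant neighbors are red, so at most \(k-1\) of its \(k\) variable neighbors may be blue. Hence \(C_j\) is violated exactly when all its variable neighbors are blue.
\item A variable vertex \(x_i\) occurring in \(k\) clauses has degree \(2k+2\), namely \(r_i\), \(b_i\), \(k\) red vertices of \(R'\) and \(k\) blue clause vertices. Either color gives it at most \(k+1\) equal-colored neighbors, so it is never violated.
\item A leaf \(\bar{x_i}\) has one red and one blue neighbor, so it is never violated.
\item The vertices of \(R'\), \(B'\) and \(D\) are safe, as observed after the construction.
\item The vertex \(r_i\) (resp.\ \(b_i\)) is violated exactly when \(x_i\) and \(\bar{x_i}\) are both red (resp.\ both blue).
\end{itemize}
So a move is legal if and only if it keeps \(x_i\) and \(\bar{x_i}\) differently colored and never makes all variable neighbors of a clause blue. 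Moreover, once one vertex of a pair is colored, the opposite color on its partner is always legal.

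\textbf{Step 2: Alice's strategy.} Alice keeps an imagined \textsc{POS-CNF} position in which Alice only sets variables True and Bob only sets them False. She maintains two invariants. First, every actually colored \(x_i\) is assigned in the imagined game. Second, every imagined-True variable is actually red. On her turn she takes the variable \(x_i\) prescribed by her winning strategy and colors it red. This move is legal: \(\bar{x_i}\) cannot be red, since otherwise \(x_i\) would already have been assigned imagined False. Coloring \(x_i\) red never hurts a clause.

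\textbf{Step 3: translating Bob's moves.} Each move of Bob is converted into one imagined Bob move.
\begin{itemize}
\item If Bob colors \(x_i\) blue, or colors \(\bar{x_i}\) red while \(x_i\) is uncolored, Alice treats this as Bob setting \(x_i\) False.
\item If Bob colors \(x_i\) red, or colors \(\bar{x_i}\) blue while \(x_i\) is uncolored, the variable becomes (or is forced to become) red. Alice still treats it as imagined False.
\item If Bob colors a leaf whose partner is already colored, this is a pass. Alice treats it as Bob setting an arbitrary unassigned variable False, and if none remain she makes any legal move.
\end{itemize}
In all cases the imagined assignment is pointwise dominated by the actual set of red variables. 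This is exactly where positivity of \(\Phi\) is used: monotonicity means extra True values only help Alice.

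\textbf{Step 4: finishing and the main obstacle.} When the imagined game ends, Alice completes the coloring. Half-colored pairs get the opposite color on the missing vertex, which is legal by Step 1. Fully uncolored pairs are colored \(x_i\) red and then \(\bar{x_i}\) blue. Since the imagined assignment satisfies \(\Phi\), every clause ends with a red variable neighbor.

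The delicate point, which I expect to be the main obstacle, is ensuring that no clause is violated \emph{during} play. Suppose some clause became violated at some moment. Then at that moment all of its variable neighbors were blue, and colors never change, so they stay blue. That contradicts the final red neighbor. The careful bookkeeping lies in checking the invariants of Step 2 across all of Bob's move types in Step 3, and especially in confirming that Alice's prescribed variable is always actually uncolored and legally colorable red.
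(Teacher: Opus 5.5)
Your strategy is the paper's. Alice colors red each variable her \textsc{POS-CNF} strategy sets True, and Bob's moves on $x_i$ or $\bar{x_i}$ are read as setting $x_i$ False. Step~1 is accurate up to its last sentence; your count $\deg(x_i)=2k+2$ is in fact more careful than the paper's proof, which counts $\bar{x_i}$ as a neighbor of $x_i$. But that last sentence is false, and the gap sits exactly at the point you flag. If $\bar{x_i}$ is red, blue on $x_i$ is illegal whenever some clause $C_j\ni x_i$ has all its other variable neighbors blue. In that position $x_i$ has no legal color at all, and this is precisely how Bob wins in \Cref{lem:Bobwin}. Since every move in the game is legal, no clause vertex is ever violated. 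So Step~4 rules out an event that cannot occur, instead of the one that can: Bob creating such a dead vertex while the imagined game is still running. It also leans on a final coloring whose existence is exactly what is at stake. For the same reason, ``legal by Step~1'' for completing half-colored pairs is not justified by Step~1.

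The repair uses your own invariants.
\begin{itemize}
\item At the moment a dead $x_i$ would appear, each other variable of $C_j$ is colored, hence assigned (first invariant). It is also blue, hence not True (second invariant), so it is imagined False.
\item $x_i$ itself is uncolored, hence not True. It was assigned at the latest when Bob colored $\bar{x_i}$ red, since only Bob colors leaves while the imagined game runs.
\item So the imagined position, reached while Alice follows her winning strategy, already falsifies $C_j$, which is a contradiction.
\item Once the imagined game is over, every clause contains an imagined-True, hence red, variable. So no dead vertex can arise any more, and every completion move (in particular blue on $x_i$) is legal.
\end{itemize}
You should also make Step~3 exhaustive. Any move of Bob that assigns no new variable must be treated as a pass as well, for example coloring $x_i$ after he already colored $\bar{x_i}$, or touching a pair whose variable was assigned during an earlier pass. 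With these additions your argument is a more careful version of the paper's, which simply asserts that all vertices end up colored and then checks the final coloring.
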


\textbf{Strategy for Alice:}
\begin{itemize}
    \item If Alice's winning strategy in the POS-CNF game sets the variable $x_i$ to True, color the variable vertex $x_i$ red.
    \item If a variable vertex $x_k$ is colored red or blue by Bob, assume that Bob has set the variable $x_k$ to False in the POS-CNF game and proceed (according to the strategy in the POS-CNF game).
    \item If a leaf vertex $\bar{x_k}$ is colored red or blue by Bob, assume that Bob has set the variable $x_k$ to False in the POS-CNF game and proceed (according to the strategy in the POS-CNF game).
    \item If none of the above is possible, color an uncolored vertex $x_i$ (or $\bar{x_i}$) with the color opposite to the vertex $\bar{x_i}$ (or $x_i$) (note that if both $x_i$ and $\bar{x_i}$ was not colored at this point, then the variable $x_i$ would be unassigned in the POS-CNF game, and then Alice would have a valid move according to the strategy for the POS-CNF game).
\end{itemize}
The proof that this is actually a winning strategy for Alice is deferred to the Appendix.

\begin{figure}[h!]
\begin{subfigure}{\linewidth}
\centering
   \begin{tikzpicture}[scale=0.8]
       \foreach \x in {0,3,...,9}
         \draw (\x,0) circle (0.3 cm);
    \foreach \x in {0,1,...,3}
      {\pgfmathtruncatemacro{\z}{\x+1}
      \pgfmathtruncatemacro{\y}{3*\x}
      \node at (\y,0){$x_\z$};}
    \foreach \x in {0,3,...,9}
         \draw (\x+2,0) circle (0.3 cm);
    \foreach \x in {0,1,...,3}
      {\pgfmathtruncatemacro{\z}{\x+1}
      \pgfmathtruncatemacro{\y}{3*\x}
      \node at (\y+2,0){$\bar{x_\z}$};}
    \foreach \x in {0,3,...,9}
        {\draw(\x+0.2,0.2)--(\x+0.7,0.7);
        \draw(\x+0.2,-0.2)--(\x+0.7,-0.7);
        \draw(\x+1.8,0.2)--(\x+1.3,0.7);
        \draw(\x+1.8,-0.2)--(\x+1.3,-0.7);
        }
    \foreach \x in {0,3,...,9}
         {\draw[fill=IndianRed!80] (\x+1,0.7) circle (0.3 cm);
         \draw[fill=DodgerBlue!70](\x+1,-0.7) circle (0.3 cm);
         }
    \foreach \x in {0,1,...,3}
      {\pgfmathtruncatemacro{\z}{\x+1}
      \pgfmathtruncatemacro{\y}{3*\x}
      \node at (\y+1,0.7){$r_\z$};
      \node at (\y+1,-0.7){$b_\z$};}

    \draw[fill=IndianRed!80](0,-2) circle (0.2 cm);
    \draw[fill=DodgerBlue!70](-0.5,-2.5) circle (0.2 cm);
    \draw[fill=DodgerBlue!70](0.5,-2.5) circle (0.2 cm);
     \draw[fill=IndianRed!80](0,-3) circle (0.2 cm);
    \draw(-0.35,-2.4)--(-0.15,-2.1);
    \draw(0.35,-2.4)--(0.15,-2.1);
     \draw(-0.35,-2.6)--(-0.15,-2.9);
    \draw(0.35,-2.6)--(0.15,-2.9);
    
   \draw[fill=IndianRed!80](3,-2) circle (0.2 cm);
    \draw[fill=DodgerBlue!70](2.5,-2.5) circle (0.2 cm);
    \draw[fill=DodgerBlue!70](3.5,-2.5) circle (0.2 cm);
     \draw[fill=IndianRed!80](3,-3) circle (0.2 cm);
    \draw(2.65,-2.4)--(2.85,-2.1);
    \draw(3.35,-2.4)--(3.15,-2.1);
     \draw(2.65,-2.6)--(2.85,-2.9);
    \draw(3.35,-2.6)--(3.15,-2.9);

    \draw[fill=IndianRed!80](5,-2) circle (0.2 cm);
    \draw[fill=DodgerBlue!70](4.5,-2.5) circle (0.2 cm);
    \draw[fill=DodgerBlue!70](5.5,-2.5) circle (0.2 cm);
     \draw[fill=IndianRed!80](5,-3) circle (0.2 cm);
    \draw(4.65,-2.4)--(4.85,-2.1);
    \draw(5.35,-2.4)--(5.15,-2.1);
     \draw(4.65,-2.6)--(4.85,-2.9);
    \draw(5.35,-2.6)--(5.15,-2.9);
    
    \draw[fill=IndianRed!80](7,-2) circle (0.2 cm);
    \draw[fill=DodgerBlue!70](6.5,-2.5) circle (0.2 cm);
    \draw[fill=DodgerBlue!70](7.5,-2.5) circle (0.2 cm);
     \draw[fill=IndianRed!80](7,-3) circle (0.2 cm);
    \draw(6.65,-2.4)--(6.85,-2.1);
    \draw(7.35,-2.4)--(7.15,-2.1);
     \draw(6.65,-2.6)--(6.85,-2.9);
    \draw(7.35,-2.6)--(7.15,-2.9);

     \draw[fill=IndianRed!80](10,-2) circle (0.2 cm);
    \draw[fill=DodgerBlue!70](9.5,-2.5) circle (0.2 cm);
    \draw[fill=DodgerBlue!70](10.5,-2.5) circle (0.2 cm);
     \draw[fill=IndianRed!80](10,-3) circle (0.2 cm);
    \draw(9.65,-2.4)--(9.85,-2.1);
    \draw(10.35,-2.4)--(10.15,-2.1);
     \draw(9.65,-2.6)--(9.85,-2.9);
    \draw(10.35,-2.6)--(10.15,-2.9);
    
    \draw(0,-0.3)--(0,-1.8);
    \draw(3,-0.3)--(3,-1.8);
    \draw(6,-0.3)--(6.9,-1.8);
     \draw(6,-0.3)--(5.1,-1.8);
    \draw(9,-0.3)--(9.9,-1.8);
   
    \draw[fill=DodgerBlue!70](0,4) circle (0.5 cm);
    \node at (0,4){$C_1$};

    \draw[fill=DodgerBlue!70](2.5,4) circle (0.5 cm);
    \node at (2.5,4){$C_2$};

    \draw[fill=DodgerBlue!70](10.5,4) circle (0.5 cm);
    \node at (10.5,4){$C_3$};

    \draw[fill=IndianRed!80](2.5,5.5) circle (0.5 cm);
    \node at (2.5,5.5){$D^1_2$};
    \draw[fill=IndianRed!80](10.5,5.5) circle (0.5 cm);
    \node at (10.5,5.5){$D^1_3$};

    \draw(2.5,4.5)--(2.5,5);
    \draw(10.5,4.5)--(10.5,5);

    \draw(0,0.3)--(0,3.5);
    \draw(3,0.3)--(2.5,3.5);
    \draw(6,0.3)--(10,4);
    \draw(6,0.3)--(3,4);
    \draw(9,0.3)--(10.1,3.7);
    
   \end{tikzpicture}
   \caption{A reduced instance from the instance of POS-CNF $\Phi=C_1\land C_2 \land C_3$, where $C_1=x_1$, $C_2=x_2\lor x_3$ and $C_3=x_3\lor x_4$. Bob has the following winning strategy: If Alice does not set $x_1$ to $T$, set $x_1$ to $F$. If Alice sets $x_1$ to $T$, set $x_3$ to $F$. If Alice sets $x_2$ to $T$, set $x_4$ to $F$. Else, set $x_2$ to $F$.}
\end{subfigure}
\\
\\
\begin{subfigure}{\linewidth}
    \centering
   \begin{tikzpicture}[scale=0.8]
       \foreach \x in {0,5,8,9}
         \draw[fill=IndianRed!80] (\x,0) circle (0.3 cm);
      \foreach \x in {2,3,11}
         \draw[fill=DodgerBlue!70] (\x,0) circle (0.3 cm);
    \draw (6,0) circle (0.3 cm);
    \foreach \x in {0,1,...,3}
      {\pgfmathtruncatemacro{\z}{\x+1}
      \pgfmathtruncatemacro{\y}{3*\x}
      \node at (\y,0){$x_\z$};}
      \foreach \x in {0,1,...,3}
      {\pgfmathtruncatemacro{\z}{\x+1}
      \pgfmathtruncatemacro{\y}{3*\x}
      \node at (\y+2,0){$\bar{x_\z}$};}
    \foreach \x in {0,3,...,9}
        {\draw(\x+0.2,0.2)--(\x+0.7,0.7);
        \draw(\x+0.2,-0.2)--(\x+0.7,-0.7);
        \draw(\x+1.8,0.2)--(\x+1.3,0.7);
        \draw(\x+1.8,-0.2)--(\x+1.3,-0.7);
        }
    \foreach \x in {0,3,...,9}
         {\draw[fill=IndianRed!80] (\x+1,0.7) circle (0.3 cm);
         \draw[fill=DodgerBlue!70](\x+1,-0.7) circle (0.3 cm);
         }
    \foreach \x in {0,1,...,3}
      {\pgfmathtruncatemacro{\z}{\x+1}
      \pgfmathtruncatemacro{\y}{3*\x}
      \node at (\y+1,0.7){$r_\z$};
      \node at (\y+1,-0.7){$b_\z$};}

    \draw[fill=IndianRed!80](0,-2) circle (0.2 cm);
    \draw[fill=DodgerBlue!70](-0.5,-2.5) circle (0.2 cm);
    \draw[fill=DodgerBlue!70](0.5,-2.5) circle (0.2 cm);
     \draw[fill=IndianRed!80](0,-3) circle (0.2 cm);
    \draw(-0.35,-2.4)--(-0.15,-2.1);
    \draw(0.35,-2.4)--(0.15,-2.1);
     \draw(-0.35,-2.6)--(-0.15,-2.9);
    \draw(0.35,-2.6)--(0.15,-2.9);
    
    \draw[fill=IndianRed!80](3,-2) circle (0.2 cm);
    \draw[fill=DodgerBlue!70](2.5,-2.5) circle (0.2 cm);
    \draw[fill=DodgerBlue!70](3.5,-2.5) circle (0.2 cm);
     \draw[fill=IndianRed!80](3,-3) circle (0.2 cm);
    \draw(2.65,-2.4)--(2.85,-2.1);
    \draw(3.35,-2.4)--(3.15,-2.1);
     \draw(2.65,-2.6)--(2.85,-2.9);
    \draw(3.35,-2.6)--(3.15,-2.9);

    \draw[fill=IndianRed!80](5,-2) circle (0.2 cm);
    \draw[fill=DodgerBlue!70](4.5,-2.5) circle (0.2 cm);
    \draw[fill=DodgerBlue!70](5.5,-2.5) circle (0.2 cm);
     \draw[fill=IndianRed!80](5,-3) circle (0.2 cm);
    \draw(4.65,-2.4)--(4.85,-2.1);
    \draw(5.35,-2.4)--(5.15,-2.1);
     \draw(4.65,-2.6)--(4.85,-2.9);
    \draw(5.35,-2.6)--(5.15,-2.9);
    
    \draw[fill=IndianRed!80](7,-2) circle (0.2 cm);
    \draw[fill=DodgerBlue!70](6.5,-2.5) circle (0.2 cm);
    \draw[fill=DodgerBlue!70](7.5,-2.5) circle (0.2 cm);
     \draw[fill=IndianRed!80](7,-3) circle (0.2 cm);
    \draw(6.65,-2.4)--(6.85,-2.1);
    \draw(7.35,-2.4)--(7.15,-2.1);
     \draw(6.65,-2.6)--(6.85,-2.9);
    \draw(7.35,-2.6)--(7.15,-2.9);

     \draw[fill=IndianRed!80](10,-2) circle (0.2 cm);
    \draw[fill=DodgerBlue!70](9.5,-2.5) circle (0.2 cm);
    \draw[fill=DodgerBlue!70](10.5,-2.5) circle (0.2 cm);
     \draw[fill=IndianRed!80](10,-3) circle (0.2 cm);
    \draw(9.65,-2.4)--(9.85,-2.1);
    \draw(10.35,-2.4)--(10.15,-2.1);
     \draw(9.65,-2.6)--(9.85,-2.9);
    \draw(10.35,-2.6)--(10.15,-2.9);
    
    \draw(0,-0.3)--(0,-1.8);
    \draw(3,-0.3)--(3,-1.8);
    \draw(6,-0.3)--(6.9,-1.8);
    \draw(6,-0.3)--(4.9,-1.8);
    \draw(9,-0.3)--(9.9,-1.8);
   
   \draw[fill=DodgerBlue!70](0,4) circle (0.5 cm);
    \node at (0,4){$C_1$};

    \draw[fill=DodgerBlue!70](2.5,4) circle (0.5 cm);
    \node at (2.5,4){$C_2$};

    \draw[fill=DodgerBlue!70](10.5,4) circle (0.5 cm);
    \node at (10.5,4){$C_3$};

    \draw[fill=IndianRed!80](2.5,5.5) circle (0.5 cm);
    \node at (2.5,5.5){$D^1_2$};
    \draw[fill=IndianRed!80](10.5,5.5) circle (0.5 cm);
    \node at (10.5,5.5){$D^1_3$};

    \draw(2.5,4.5)--(2.5,5);
    \draw(10.5,4.5)--(10.5,5);

    \draw(0,0.3)--(0,3.5);
    \draw(3,0.3)--(2.5,3.5);
    \draw(6,0.3)--(10,4);
    \draw(6,0.3)--(3,4);
    \draw(9,0.3)--(10.1,3.7);

\end{tikzpicture}
    \caption{Example of a game-play where Alice plays by the above strategy: Alice colors $x_1$ red, Bob colors $\bar{x_3}$ red, Alice colors $x_4$ red. Then Bob colors $\bar{x_2}$ red, Alice colors $\bar{x_1}$ blue, Bob colors $x_2$ blue, Alice colors $\bar{x_4}$ blue. Note that coloring $x_3$ blue violates the majority condition at $C_2$ and coloring $x_3$ red violates the majority condition at $r_3$. Hence, $x_3$ cannot be assigned any color and Bob wins.}
\end{subfigure}
    \caption{Demonstration of a game play when Bob has a winning strategy.}
    \label{fig:reductioncnfBob}
\end{figure}

\begin{lemma}\label{lem:Bobwin}
   If Bob has a winning strategy in the \textsc{POS-CNF} game on $\Phi$, then he has a winning strategy in the \textsc{Extended Majority $2$-Coloring Game} on $G$. 
\end{lemma}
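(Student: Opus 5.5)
The plan is to mirror the proof of \Cref{lem:Alicewin}: Bob transfers his winning \textsc{POS-CNF} strategy to $G$ through the correspondence ``$x_i$ red $=$ True, $x_i$ blue $=$ False''. The first step is a local analysis of the gadget, using the degrees in the construction.

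\begin{itemize}
\item $\deg(x_i)=2+2c_i$, where $c_i$ is the number of clauses containing $x_i$. Whichever color $x_i$ receives, it has at most $c_i+1=\floor{\deg(x_i)/2}$ neighbors of that color. So the majority condition never fails at $x_i$ itself, nor at the $R'$ and $B'$ vertices, as already observed for \Cref{thm:Extended2coloring}.
\item The vertices $r_i$ and $b_i$ have degree $2$. Hence they are violated exactly when $x_i$ and $\bar{x_i}$ receive the same color.
\item A clause vertex $C_j$ with $k$ literals is blue and has degree $2k-1$. It has $k-1$ red pendants, so it is violated exactly when all $k$ of its variable vertices are blue.
\item A leaf vertex $\bar{x_i}$ has only the neighbors $r_i$ and $b_i$.
\end{itemize}

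Consequently a complete legal coloring exists iff $x_i$ and $\bar{x_i}$ always get opposite colors and every clause has a red variable vertex. In particular, Bob wins if at some point a clause has all its variable vertices blue, or some vertex becomes uncolorable.

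Bob's strategy is to maintain a simulated \textsc{POS-CNF} play in which the value of $x_i$ is defined as soon as either $x_i$ or $\bar{x_i}$ is colored. If $x_i$ is colored, its value is its color (red $=$ True). If $\bar{x_i}$ is colored, the value is the one forced on $x_i$, namely the opposite color. Whenever Alice's move fixes a new variable, Bob treats it as Alice's move in \textsc{POS-CNF}, even when she sets a variable False. His winning strategy handles every legal move, and by monotonicity a False move by Alice only helps Bob. Bob then answers by coloring the variable vertex $x_k$ prescribed by his strategy blue, which sets $x_k$ to False. Later colorings of $x_k$ or $\bar{x_k}$ for an already-valued variable are \emph{free moves}. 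If such a free move is legal it must agree with the forced value: otherwise $r_k$ or $b_k$ is immediately violated, or the remaining vertex becomes uncolorable, which is a win for Bob. Bob answers a free move of Alice by a free move of his own, namely coloring the partner vertex of some already-valued variable consistently.

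The main obstacle is the tempo bookkeeping. Alice's free moves can leave Bob with no free reply, and at that point he must fix a new variable out of turn. To handle this I would invoke the standard monotonicity argument for positive CNF games. An extra variable set to False by Bob can only shrink the set of satisfied clauses. Concretely, Bob pretends that Alice made some move in the simulated game, or simply continues his strategy from a position that dominates it, and he still reaches a final assignment falsifying some clause. I would formalize this by an invariant: at every moment, the simulated assignment is at most (pointwise, True $>$ False) a position reachable in a \textsc{POS-CNF} play consistent with Bob's winning strategy. Finally, when the simulated game ends with a falsified clause $C_j$, every variable vertex adjacent to $C_j$ is blue or is forced to be blue. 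Therefore the last such vertex is uncolorable or $C_j$ is violated, and Bob wins.
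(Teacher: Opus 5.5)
Your plan is the same as the paper's. You simulate the \textsc{POS-CNF} game through the pairs $\{x_i,\bar{x_i}\}$, answer each of Alice's variable-fixing moves with the False move from Bob's winning strategy, and conclude from a falsified clause. The concrete problem is your encoding of Bob's False move as ``color $x_k$ blue''.

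Take the case where Alice only ever sets variables True. Then every variable of the clause $C_j$ that Bob's strategy falsifies is set False by Bob. So under your strategy, the first $k-1$ variable vertices of $C_j$ are already blue when the strategy tells Bob to set the last one, $x_k$, False. Coloring $x_k$ blue would give the blue vertex $C_j$ exactly $k>\floor{(2k-1)/2}$ blue neighbours, so the move is illegal. Moves that break the majority condition are not wins for Bob; they are simply unavailable. Hence ``$C_j$ is violated'' is not a possible outcome, and your final paragraph fails exactly at the decisive move.

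The fix is the paper's encoding: Bob sets $x_k$ False by coloring the leaf $\bar{x_k}$ red. This is always legal while $x_k$ is uncolored, since $\bar{x_k}$ then sees one red and one blue neighbour, $r_k$ one red neighbour, and $b_k$ none. It also forbids red at $x_k$ permanently because of $r_k$. Consequently, the last uncolored variable vertex of the falsified clause ends up with no legal color.

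The tempo obstacle you single out does not actually arise, so the monotonicity argument for out-of-turn moves can be dropped. Suppose Bob answers every move on an untouched pair with a move on an untouched pair (while one exists), and every free move with a free move. Then after each of Bob's moves the number of half-colored pairs is even. So after any free move by Alice, another half-colored pair is available for his free reply. That consistent free reply is legal unless the vertex is already uncolorable, which is a win for Bob anyway.

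Your handling of Alice's moves on leaf vertices, and of her False moves via monotonicity, is more careful than the paper's bullet list. The paper only mentions Alice coloring variable vertices and leaves the tempo issue implicit.
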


\textbf{Strategy for Bob:}
\begin{itemize}
    \item Whenever Alice colors the variable vertex $x_i$ (blue or red), assume that Alice has set the variable $x_i$ to True and proceed (according to Bob's winning strategy on $\Phi$).
    \item If Bob needs to set $x_i$ to False according to the strategy on $\Phi$, color the leaf vertex $\bar{x_i}$ red. 
    \item If none of the above is possible, color an uncolored vertex $x_i$ (or $\bar{x_i}$) with the color opposite to the vertex $\bar{x_i}$ (or $x_i$) (note that if both $x_i$ and $\bar{x_i}$ was not colored at this point, then the variable $x_i$ would be unassigned in the POS-CNF game, and then Bob would have a valid move according to the strategy for the POS-CNF game).
\end{itemize}

We shift the proof that this is actually a winning strategy for Bob to the appendix. Hence, by \Cref{lem:Alicewin} and \Cref{lem:Bobwin} and the fact that the construction of the reduced instance takes polynomial time, and Extended Majority $2$-Coloring Game is in \PSPACE, we have the main result of this section.
\begin{theorem}\label{thm:majtwocoloringgame}
The \textsc{Extended Majority $2$-Coloring Game} is \PSPACE-complete, even for bipartite graphs.
\end{theorem}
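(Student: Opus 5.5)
The plan is to combine membership in \PSPACE, which the paper has already argued (the game lasts at most $n$ moves and each turn offers at most $2n$ options, so a depth-first evaluation of the game tree uses polynomial space), with \PSPACE-hardness via the reduction from \textsc{POS-CNF} described above. The reduction is computable in polynomial time, and bipartiteness follows from the explicit bipartition $(X\cup L\cup B^\prime \cup D)\uplus (C\cup R\cup B\cup R^\prime)$ noted after the construction. So the substance is the equivalence: Alice wins the \textsc{POS-CNF} game on $\Phi$ if and only if she wins the \textsc{Extended Majority $2$-Coloring Game} on $G$. By determinacy of finite games, it suffices to prove \Cref{lem:Alicewin} and \Cref{lem:Bobwin}.

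First I record the local structure of the gadgets. The precolored vertices in $D$, $R^\prime$, $B^\prime$, $R$, $B$ can never be violated except in the following ways:
\begin{itemize}
\item a red $r_i$ is violated exactly when both $x_i$ and $\bar{x_i}$ are red;
\item a blue $b_i$ is violated exactly when both $x_i$ and $\bar{x_i}$ are blue;
\item a clause vertex $C_j$, which has $k-1$ red pendant neighbours and $k$ variable neighbours, is violated exactly when all its variable neighbours are blue.
\end{itemize}
For $R^\prime$ and $B^\prime$ this is the observation already made in the NP section. At a variable vertex $x_i$ itself, the neighbours are $r_i$, $b_i$, some clause vertices (blue) and some red $R^\prime$ vertices, one per clause occurrence. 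Hence coloring $x_i$ red is always locally safe at $x_i$, and coloring it blue is safe at $x_i$ as well. A leaf vertex $\bar{x_i}$ has neighbours $r_i$ and $b_i$ only, so either color is fine at $\bar{x_i}$.

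For \Cref{lem:Alicewin}, I will maintain the invariant that after each of Alice's moves, every variable that is "set" in the simulated POS-CNF game corresponds to a pair $x_i,\bar{x_i}$ that contains at least one colored vertex. Alice's True-variables $x_i$ are colored red. Also, no pair is ever monochromatic, because Alice answers a colored partner with the opposite color whenever the partner vertex is still needed. The key claim is that no uncolored vertex ever becomes uncolorable:
\begin{itemize}
\item for a pair vertex, one of the two colors differs from its partner, and the clause constraint only forbids blue;
\item a red $x_i$ never hurts any $C_j$;
\item at the end, each clause contains a True variable, which is red, so no $C_j$ has all neighbours blue.
\end{itemize}
The delicate case is when Bob colors $x_k$ blue, or colors $\bar{x_k}$ red, forcing its partner's color. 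I treat that variable as False, and Alice's simulated winning strategy guarantees that every clause retains a True (red) literal.

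For \Cref{lem:Bobwin}, Bob colors $\bar{x_i}$ red for each False variable. This makes $x_i$ unable to be red (it would violate $r_i$) and so forces $x_i$ to be blue. Since Bob wins the POS-CNF game, some clause $C_j$ ends up with all its variables False. Once every neighbour but one of $C_j$ is blue and the last neighbour's partner is red, that last vertex has no legal color, and Bob wins. I must also handle Alice coloring an $x_i$ blue, which I count as True in the simulation but which only helps Bob, and Alice coloring a leaf $\bar{x_i}$, which I treat as a non-move in the simulation. These are handled by the "otherwise" clause of the strategy.

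The main obstacle is tempo: off-simulation moves by either player must not break the move parity of the simulation. I will handle this with a strategy-stealing-style argument: extra moves inside an already-set pair never hurt the simulating player, because POS-CNF is monotone.
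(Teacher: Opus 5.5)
\textbf{Overall route.} Your approach is the paper's: \PSPACE{} membership from the polynomial-depth game tree, hardness from the same \textsc{POS-CNF} gadget, and the equivalence via \Cref{lem:Alicewin} and \Cref{lem:Bobwin} by simulating the formula game. Your local analysis of the gadget is correct. It implies that the only vertex that can ever become uncolorable is an $x_i$ with $\bar{x_i}$ red while some clause through $x_i$ has all its other variable vertices blue.

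\textbf{The gap: Bob's bookkeeping.} You treat Alice's coloring of a leaf $\bar{x_i}$ as a non-move. Suppose the pair is untouched and Alice colors $\bar{x_i}$ blue. Afterwards $x_i$ can only be red, since blue would violate $b_i$. So this move is exactly Alice setting $x_i$ True. Ignoring it desynchronizes the simulation from the coloring in two ways. First, Bob's simulated strategy can later tell him to set $x_i$ False, which he cannot do. Second, your last step (the falsified clause has all its leaves colored red by Bob, so its variable vertices are forced blue) may then concern a variable that is actually red.

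Concretely, take $\Phi=(x_1)\wedge(x_2)$, which is a win for Bob.
\begin{itemize}
\item Alice opens with $\bar{x_2}$ blue. Your Bob registers nothing, falls back on the completion (``otherwise'') clause, and colors $x_2$ red.
\item Alice colors $x_1$ red.
\item The simulated strategy now asks Bob to set $x_2$ False, which is impossible, and the graph is completed legally.
\end{itemize}
The fix is the convention the paper's proof of \Cref{lem:Bobwin} actually relies on. Any move of Alice on a pair whose two vertices are both uncolored, on either vertex and in either color, is recorded as Alice setting that variable True. This is conservative for Bob when the move was really a False-type one. Only moves on an already touched pair count as non-moves.

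\textbf{Smaller points on Alice's side.} The invariant you state (every set variable has a colored vertex) is the trivial direction. What is needed is that every False-type move of Bob ($x_k$ blue or $\bar{x_k}$ red) is recorded, so that each instruction ``color $x_i$ red'' is issued on an untouched pair and is therefore legal. Colorability must also be checked at every moment, not only ``at the end''. An uncolorable $x_i$ as above would make its clause entirely False in the simulation at that moment, which a winning \textsc{POS-CNF} strategy never permits.

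\textbf{Tempo.} Your tempo remark identifies a real issue that the paper passes over, and it is resolved as you suggest once stated precisely. Answer an opponent's move on an already touched pair with an arbitrary extra move in your own favour, if an untouched pair remains: Alice colors an untouched $x_j$ red, Bob colors an untouched $\bar{x_j}$ red, and both moves are always legal. When the simulated strategy later asks for a move already made, make another arbitrary one. Monotonicity of the positive game guarantees that these extra moves never hurt.
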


\section{Concluding remarks}

We have shown that $\mu_g(G)\le 3$ for all forests with \(\Delta(G) \leq 4\) via a configuration-based strategy.
The approach thus extends the result of Bosek--Grytczuk--Jak\'obczak~\cite{BosekGrytczukEtAl2019} for complete binary trees to, for instance, all binary and ternary trees.
We list several natural directions for further work below.
It is observed in~\cite{BosekGrytczukEtAl2019} that it is not clear whether \(\mu_g(T) \leq 3\) holds for all trees \(T\). We ask:

\begin{question} For which trees \(T\) is it true that $\mu_g(T) = 2$? \end{question}

We have shown that paths and stars are game majority \(2\)-colorable, but it is easy to find other small examples of trees that are game majority \(2\)-colorable.

Next, it is known~\cite{Sidorowicz2007} that \(\mathrm{col}_g(G) \leq 5\) for any cactus graph \(G\).
More generally, it is shown in~\cite{JunoszaSzaniawskiRozej2010} that if \(c\) is a constant such that every edge of a graph \(G\) lies on at most \(c\) cycles, then \(\mathrm{col}_g(G) \leq c + 4\).
Since \(\mu_g(G) \leq \mathrm{col}_g(G)\) as shown in~\cite{BosekGrytczukEtAl2019}, it follows that $\mu_g(G)\le 5$ for any cactus $G$. We ask whether this bound is tight:

\begin{question}
    Is there a cactus graph \(G\) for which \(\mu_g(G) = 5\)?
\end{question}

Lastly, we highlight that the behavior of the majority coloring parameter remains to be explored under variants of the coloring game, such as under restricted color reuse~\cite{ChenSchelpEtAl1997,JanczewskiObszarski2022} or alternative majority conditions~\cite{BockKalinowskiEtAl2023}.


\bibliography{fsttcs-final}

@article{AharoniMilnerEtAl1990,
 author = {Aharoni, R. and Milner, E. C. and Prikry, K.},
 title = {Unfriendly partitions of a graph},
 fjournal = {Journal of Combinatorial Theory. Series B},
 journal = {J. Comb. Theory, Ser. B},
 issn = {0095-8956},
 volume = {50},
 number = {1},
 pages = {1--10},
 year = {1990},
 language = {English},
 doi = {10.1016/0095-8956(90)90092-E},
 zbMATH = {4181395},
 Zbl = {0717.05065}
}

@article{AnholcerBosekEtAl2017,
 author = {Anholcer, Marcin and Bosek, Bart{\l}omiej and Grytczuk, Jaros{\l}aw},
 title = {Majority choosability of digraphs},
 fjournal = {The Electronic Journal of Combinatorics},
 journal = {Electron. J. Comb.},
 issn = {1077-8926},
 volume = {24},
 number = {3},
 pages = {5},
 note = {Id/No p3.57},
 year = {2017},
 language = {English},
 url = {www.combinatorics.org/ojs/index.php/eljc/article/view/v24i3p57},
 zbMATH = {6791406},
 Zbl = {1375.05079}
}

@article{AnholcerBosekEtAl2025,
 author = {Anholcer, Marcin and Bosek, Bart{\l}omiej and Grytczuk, Jaros{\l}aw and Gutowski, Grzegorz and Przyby{\l}o, Jakub and Zaj{\k a}c, Mariusz},
 title = {Mrs. {Correct} and majority colorings},
 fjournal = {Discrete Mathematics},
 journal = {Discrete Math.},
 issn = {0012-365X},
 volume = {348},
 number = {11},
 pages = {9},
 note = {Id/No 114577},
 year = {2025},
 language = {English},
 doi = {10.1016/j.disc.2025.114577},
 zbMATH = {8052106},
 Zbl = {1566.05038}
}

@article{Berger2017,
 author = {Berger, Eli},
 title = {Unfriendly partitions for graphs not containing a subdivison of an infinite cycle},
 fjournal = {Combinatorica},
 journal = {Combinatorica},
 issn = {0209-9683},
 volume = {37},
 number = {2},
 pages = {157--166},
 year = {2017},
 language = {English},
 doi = {10.1007/s00493-015-3261-1},
 zbMATH = {6851148},
 Zbl = {1399.05178}
}

@article{Bernardi1987,
 author = {Bernardi, Claudio},
 title = {On a theorem about vertex colorings of graphs},
 fjournal = {Discrete Mathematics},
 journal = {Discrete Math.},
 issn = {0012-365X},
 volume = {64},
 pages = {95--96},
 year = {1987},
 language = {English},
 doi = {10.1016/0012-365X(87)90243-3},
 zbMATH = {4014749},
 Zbl = {0625.05021}
}

@article{BockKalinowskiEtAl2023,
 author = {Bock, Felix and Kalinowski, Rafa{\l} and Pardey, Johannes and Pil{\'s}niak, Monika and Rautenbach, Dieter and Wo{\'z}niak, Mariusz},
 title = {Majority edge-colorings of graphs},
 fjournal = {The Electronic Journal of Combinatorics},
 journal = {Electron. J. Comb.},
 issn = {1077-8926},
 volume = {30},
 number = {1},
 pages = {8},
 note = {Id/No p1.42},
 year = {2023},
 language = {English},
 doi = {10.37236/11291},
 zbMATH = {7666319},
 Zbl = {1510.05069}
}

@article{BorodinKostochka1977,
 author = {Borodin, O. V. and Kostochka, A. V.},
 title = {On an upper bound of the graph's chromatic number, depending on the graph's degree and density},
 fjournal = {Journal of Combinatorial Theory. Series B},
 journal = {J. Comb. Theory, Ser. B},
 issn = {0095-8956},
 volume = {23},
 pages = {247--250},
 year = {1977},
 language = {English},
 doi = {10.1016/0095-8956(77)90037-5},
 zbMATH = {3525157},
 Zbl = {0336.05104}
}

@article{BosekGrytczukEtAl2019,
 author = {Bosek, Bart{\l}omiej and Grytczuk, Jaros{\l}aw and Jak{\'o}bczak, Gabriel},
 title = {Majority coloring game},
 fjournal = {Discrete Applied Mathematics},
 journal = {Discrete Appl. Math.},
 issn = {0166-218X},
 volume = {255},
 pages = {15--20},
 year = {2019},
 language = {English},
 doi = {10.1016/j.dam.2018.07.020},
 zbMATH = {7027123},
 Zbl = {1405.05113}
}

@article{BruhnDiestelEtAl2010,
 author = {Bruhn, Henning and Diestel, Reinhard and Georgakopoulos, Agelos and Spr{\"u}ssel, Philipp},
 title = {Every rayless graph has an unfriendly partition},
 fjournal = {Combinatorica},
 journal = {Combinatorica},
 issn = {0209-9683},
 volume = {30},
 number = {5},
 pages = {521--53},
 year = {2010},
 language = {English},
 doi = {10.1007/s00493-010-2590-3},
 zbMATH = {5990604},
 Zbl = {1231.05211}
}

@article{ChenSchelpEtAl1997,
 author = {Chen, G. and Schelp, R. H. and Shreve, W. E.},
 title = {A new game chromatic number},
 fjournal = {European Journal of Combinatorics},
 journal = {Eur. J. Comb.},
 issn = {0195-6698},
 volume = {18},
 number = {1},
 pages = {1--9},
 year = {1997},
 language = {English},
 doi = {10.1006/eujc.1994.0071},
 zbMATH = {980794},
 Zbl = {0869.05029}
}

@article{CostaPessoaEtAl2020,
 author = {Costa, Eurinardo and Pessoa, Victor Lage and Sampaio, Rudini and Soares, Ronan},
 title = {{PSPACE}-completeness of two graph coloring games},
 fjournal = {Theoretical Computer Science},
 journal = {Theor. Comput. Sci.},
 issn = {0304-3975},
 volume = {824-825},
 pages = {36--45},
 year = {2020},
 language = {English},
 doi = {10.1016/j.tcs.2020.03.022},
 zbMATH = {7203036},
 Zbl = {1442.68063}
}

@unpublished{CowenEmerson1985,
    author = {Cowen, Richard and Emerson, William},
    title = {Proportional colorings of graphs},
    note = {Unpublished, 1985}
}

@article{GareyJohnsonStockmeyer1976,
  author  = {Garey, Michael R. and Johnson, David S. and Stockmeyer, Larry},
  title   = {Some Simplified {NP}-Complete Graph Problems},
  journal = {Theoretical Computer Science},
  volume  = {1},
  number  = {3},
  pages   = {237--267},
  year    = {1976},
  doi     = {10.1016/0304-3975(76)90059-1}
}

@article{FaigleKernEtAl1993,
 author = {Faigle, U. and Kern, U. and Kierstead, H. and Trotter, W. T.},
 title = {On the game chromatic number of some classes of graphs},
 fjournal = {Ars Combinatoria},
 journal = {Ars Comb.},
 issn = {0381-7032},
 volume = {35},
 pages = {143--150},
 year = {1993},
 language = {English},
 zbMATH = {398953},
 Zbl = {0796.90082}
}

@article{JanczewskiObszarski2022,
 author = {Janczewski, Robert and Obszarski, Pawe{\l} and Turowski, Krzysztof and Wr{\'o}blewski, Bart{\l}omiej},
 title = {Infinite chromatic games},
 fjournal = {Discrete Applied Mathematics},
 journal = {Discrete Appl. Math.},
 issn = {0166-218X},
 volume = {309},
 pages = {138--146},
 year = {2022},
 language = {English},
 doi = {10.1016/j.dam.2021.11.020},
 zbMATH = {7456386},
 Zbl = {1514.05112}
}

@article{JunoszaSzaniawskiRozej2010,
 author = {Junosza-Szaniawski, Konstanty and Ro{\d{z}}ej, {\l}ukasz},
 title = {Game chromatic number of graphs with locally bounded number of cycles},
 fjournal = {Information Processing Letters},
 journal = {Inf. Process. Lett.},
 issn = {0020-0190},
 volume = {110},
 number = {17},
 pages = {757--760},
 year = {2010},
 language = {English},
 doi = {10.1016/j.ipl.2010.06.004},
 zbMATH = {6018472},
 Zbl = {1234.05090}
}

@article{KalinowskiPilsniakEtAl2025,
 author = {Kalinowski, Rafa{\l} and Pils{\'n}iak, Monika and Stawiski, Marcin},
 title = {Unfriendly partition conjecture holds for line graphs},
 fjournal = {Combinatorica},
 journal = {Combinatorica},
 issn = {0209-9683},
 volume = {45},
 number = {1},
 pages = {10},
 note = {Id/No 3},
 year = {2025},
 language = {English},
 doi = {10.1007/s00493-024-00131-1},
 zbMATH = {8024643},
 Zbl = {1574.05220}
}

@incollection{Karp1972,
  author    = {Karp, Richard M.},
  title     = {Reducibility Among Combinatorial Problems},
  booktitle = {Complexity of Computer Computations},
  editor    = {Miller, Raymond E. and Thatcher, James W.},
  publisher = {Plenum Press},
  address   = {New York},
  pages     = {85--103},
  year      = {1972},
  doi       = {10.1007/978-1-4684-2001-2_9}
}

@article{KreutzerOumEtAl2017,
 author = {Kreutzer, Stephan and Oum, Sang-Il and Seymour, Paul and van der Zypen, Dominic and Wood, David R.},
 title = {Majority colourings of digraphs},
 fjournal = {The Electronic Journal of Combinatorics},
 journal = {Electron. J. Comb.},
 issn = {1077-8926},
 volume = {24},
 number = {2},
 pages = {9},
 note = {Id/No p2.25},
 year = {2017},
 language = {English},
 url = {www.combinatorics.org/ojs/index.php/eljc/article/view/v24i2p25},
 zbMATH = {6729618},
 Zbl = {1364.05029}
}

@article{Lawrence1978,
 author = {Lawrence, Jim},
 title = {Covering the vertex set of a graph with subgraphs of smaller degree},
 fjournal = {Discrete Mathematics},
 journal = {Discrete Math.},
 issn = {0012-365X},
 volume = {21},
 pages = {61--68},
 year = {1978},
 language = {English},
 doi = {10.1016/0012-365X(78)90147-4},
 zbMATH = {3577234},
 Zbl = {0371.05024}
}

@article{Lovasz1966,
 author = {Lov{\'a}sz, L{\'a}szl{\'o}},
 title = {On decomposition of graphs},
 fjournal = {Studia Scientiarum Mathematicarum Hungarica},
 journal = {Stud. Sci. Math. Hung.},
 issn = {0081-6906},
 volume = {1},
 pages = {237--238},
 year = {1966},
 language = {English},
 zbMATH = {3243267},
 Zbl = {0151.33401}
}

@article{Schaefer1978Games,
  author  = {Schaefer, Thomas J.},
  title   = {On the Complexity of Some Two-Person Perfect-Information Games},
  journal = {Journal of Computer and System Sciences},
  volume  = {16},
  number  = {2},
  pages   = {185--225},
  year    = {1978},
  doi     = {10.1016/0022-0000(78)90045-4}
}

@incollection{ShelahMilner1990,
 author = {Shelah, Saharon and Milner, E. C.},
 title = {Graphs with no unfriendly partitions},
 booktitle = {A tribute to Paul Erd\H{o}s},
 isbn = {0-521-38101-0},
 pages = {373--384},
 year = {1990},
 publisher = {Cambridge etc.: Cambridge University Press},
 language = {English},
 zbMATH = {4191687},
 Zbl = {0723.05058}
}

@article{Sidorowicz2007,
 author = {Sidorowicz, El{\d{z}}bieta},
 title = {The game chromatic number and the game colouring number of cactuses},
 fjournal = {Information Processing Letters},
 journal = {Inf. Process. Lett.},
 issn = {0020-0190},
 volume = {102},
 number = {4},
 pages = {147--151},
 year = {2007},
 language = {English},
 doi = {10.1016/j.ipl.2006.12.003},
 zbMATH = {5664796},
 Zbl = {1185.91058}
}

@article{Stockmeyer1973,
  author  = {Stockmeyer, Larry J.},
  title   = {Planar 3-Colorability Is Polynomial Complete},
  journal = {ACM SIGACT News},
  volume  = {5},
  number  = {3},
  pages   = {19--25},
  year    = {1973},
  doi     = {10.1145/1008293.1008294}
}

@article{PekalaPrzybylo2025,
 author = {P{\k{e}}ka{\l}a, Pawe{\l} and Przyby{\l}o, Jakub},
 title = {On list extensions of the majority edge colourings},
 fjournal = {The Electronic Journal of Combinatorics},
 journal = {Electron. J. Comb.},
 issn = {1077-8926},
 volume = {32},
 number = {4},
 pages = {research paper p4.38, 26},
 year = {2025},
 language = {English},
 doi = {10.37236/13882},
 zbMATH = {8120124},
 Zbl = {1576.05065}
}

@article{SalesMarcilonEtal2026,
 author = {Sales, Cl{\'a}udia Linhares and Marcilon, Thiago and Martins, Nicolas and Nisse, Nicolas and Sampaio, Rudini},
 title = {The harmonious coloring game},
 fjournal = {Information Processing Letters},
 journal = {Inf. Process. Lett.},
 issn = {0020-0190},
 volume = {192},
 pages = {9},
 note = {Id/No 106609},
 year = {2026},
 language = {English},
 doi = {10.1016/j.ipl.2025.106609},
 zbMATH = {8143893},
 Zbl = {1583.05102}
}

@incollection{MarcilonMartinsEtal2020,
 author = {Marcilon, Thiago and Martins, Nicolas and Sampaio, Rudini},
 title = {Hardness of variants of the graph coloring game},
 booktitle = {Latin 2020: theoretical informatics. 14th Latin American symposium, S\~ao Paulo, Brazil, January 5--8, 2021. Proceedings},
 isbn = {978-3-030-61791-2; 978-3-030-61792-9},
 pages = {348--359},
 year = {2020},
 publisher = {Cham: Springer},
 language = {English},
 doi = {10.1007/978-3-030-61792-9_28},
 zbMATH = {7600787},
 Zbl = {1551.05287}
}

@incollection{Bodlaender1992,
 author = {Bodlaender, Hans L.},
 title = {On the complexity of some coloring games},
 booktitle = {Graph-theoretic concepts in computer science. 16th international workshop WG '90, Berlin, Germany, June 20--22, 1990, Proceedings},
 isbn = {3-540-53832-1},
 pages = {30--40},
 year = {1992},
 publisher = {Berlin etc.: Springer-Verlag},
 language = {English},
 doi = {10.1007/3-540-53832-1_29},
 url = {dspace.library.uu.nl/handle/1874/16613},
 zbMATH = {139777},
 Zbl = {0770.90098}
}

@article{RahmanWatson2023,
 author = {Rahman, Md. Lutfar and Watson, Thomas},
 title = {6-uniform maker-breaker game is {PSPACE}-complete},
 fjournal = {Combinatorica},
 journal = {Combinatorica},
 issn = {0209-9683},
 volume = {43},
 number = {3},
 pages = {595--612},
 year = {2023},
 language = {English},
 doi = {10.1007/s00493-023-00026-7},
 url = {drops.dagstuhl.de/entities/document/10.4230/LIPIcs.STACS.2021.57},
 zbMATH = {7745890},
 Zbl = {1539.91032}
}

@article{FraenkelGoldschmidt1987,
 author = {Fraenkel, Aviezri S. and Goldschmidt, Elisheva},
 title = {{PSPACE}-{Hardness} of some combinatorial games},
 fjournal = {Journal of Combinatorial Theory. Series A},
 journal = {J. Comb. Theory, Ser. A},
 issn = {0097-3165},
 volume = {46},
 number = {1-2},
 pages = {21--38},
 year = {1987},
 language = {English},
 doi = {10.1016/0097-3165(87)90074-4},
 zbMATH = {4006037},
 Zbl = {0619.90109}
}

\newpage

\appendix
\section{Appendix}

\textbf{Proof of \Cref{lem:Extended3coloring}:}

Consider a proper coloring of $G$. Color each uncolored vertex $v^\prime$ in $H$ with the color of the corresponding vertex $v$ in $G$. It can be seen that this indeed is a majority coloring of $H$. Each vertex $v^\prime\in V^\prime$ has degree (in $H$) equal to $3\cdot \deg_G(v)$ and has exactly $\frac{1}{3}$ of its neighbors colored with color $i$, (for $i\in[1,3])$. Therefore, whichever color is given to $v^\prime$, it has $\frac{\deg_H(v)}{3}$ of its neighbors of its color. Therefore, the majority condition is satisfied at all the vertices of $V^\prime$. Now, consider a vertex $w^{i}_{uv}\in W$, which has two neighbors $u^\prime$ and $v^\prime$. Note that $u^\prime$ and $v^\prime$ have different colors, because the corresponding vertices $u$ and $v$ have different colors in a proper coloring of $G$. Therefore, at most one of the two neighbors of $w^{i}_{uv}$ has the same color as $w^{i}_{uv}$, i.e., the color $i$. Hence, the majority condition is satisfied at each vertex in $W$, and thus, all the vertices of $H$.

Now, we show that whenever $H$ has a majority coloring with $3$ colors which is an extension of the given partial coloring, $G$ has a proper coloring with $3$ colors. For each vertex $v\in V(G)$, assign it the color of the corresponding vertex $v^\prime\in V^\prime\subset V(H)$. Suppose that there are two vertices $u$ and $v$ in $G$ which get assigned the same color, say $i$, due to this coloring. This is possible only when the corresponding vertices $u^\prime$ and $v^\prime$ are assigned the same color $i$ in the coloring of $H$. But this violates the majority condition at $w^{i}_{uv}$, hence we have a contradiction. Thus, the described coloring is a proper $3$-coloring of $G$. \qed

\textbf{Proof of \Cref{lem:Extended2coloring}:}

Suppose $\Phi$ has a satisfying assignment, we show that it is possible to extend the given coloring of $G$ to a majority coloring of $G$. For each $i\in [1,n]$, if the satisfying assignment sets $x_i$ to True, assign the color red to $x_i$ and the color blue to $\bar{x_i}$. If the satisfying assignment sets $x_i$ to False, assign the color blue to $x_i$ and the color red to $\bar{x_i}$. Note that the variable vertices corresponding to true literals (according to the assignment) have been colored red and the variables corresponding to false literals (according to the assignment) have been colored blue. Now we check that this indeed is a majority coloring of the graph $G$. As described above, the vertices in $D,R^\prime$ and $B^\prime$ cannot violate the majority condition. A vertex $y_i\in X$ has $k$ blue neighbors in $C$ (when the literal $y_i$ appears in $k$ clauses), $k$ red neighbors in $R^\prime$, as per our description of the coloring. Therefore, the vertex $y_i$ has less than half its neighbors of its color, and hence satisfies the majority condition. A vertex $r_i (b_i)$ in $R$ ($B$) has two neighbors $x_i$ and $\bar{x_i}$, out of which exactly one is given the color red and the other is given the color blue. Therefore, the vertices in $R$ ($B$) satisfy the majority condition. Now consider a vertex $C_j$ in $C$, since we have a satisfying assignment, there is at least one literal $y_i$ appearing in clause $C_j$ which is set to true. Therefore, out of the three neighbors in $X$ of the clause vertex $C_j$, there is at least one colored red and at most two colored blue. The clause vertex $C_j$ is colored blue and has two neighbors in $D$, namely $D^1_j$ and $D^2_j$ which are colored red. Thus, $C_j$ is colored blue and has at least three red neighbors and at most two blue neighbors. Therefore, the majority condition is satisfied at each vertex in $C$. Thus, we have shown that the majority condition is satisfied at each vertex in $G$.

Now, consider a majority coloring of $G$ which is an extension of the given partial coloring. We show that there exists a satisfying assignment for the Boolean formula $\Phi$. First we show that for each $i\in[1,n]$, the vertices $x_i$ and $\bar{x_i}$ are given different colors. Suppose not, if for some $i$, both $x_i$ and $\bar{x_i}$ are given the color red (blue), then the majority condition is violated at $r_i$ ($b_i$), because $r_i$ ($b_i$) is red (blue) and both its neighbors are red (blue). So, the vertices $x_i$ and $\bar{x_i}$ get different colors for each $i\in [1,n]$. Now, if the vertex $x_i$ has the color red, set the variable $x_i$ to True and if the vertex $x_i$ has the color blue, set the variable $x_i$ to False (note that the literal corresponding to the red vertex is True). Since the coloring of $G$ is a majority coloring, each vertex $C_j$ must have at most two blue neighbors in $X$, as $C_j$ is colored blue due to the given partial coloring, and it has two red neighbors in $D$ (out of its five neighbors, two in $D$ and three in $X$). Thus, each clause vertex $C_j$ must have at least one red neighbor in $X$. Since the literal corresponding to a red vertex in $X$ is True, the clause $C_j$ is satisfied by the assignment described above. Hence, extension of the given partial coloring of $G$ to a majority coloring implies the existence of a satisfying assignment of $\Phi$. \qed

\textbf{Formal definition of the decision version of the majority game chromatic number:}

\defproblem{Extended Majority Game Chromatic Number $(G,n,k)$}{Integer $k\geq2$, graph $G$ on $n$ vertices, partially colored with at most $k$ colors}{Does Alice have a winning strategy in the majority coloring game on this partially colored graph $G$, using at most $k$ colors?}

\textbf{Formal definition of the decision version of the game chromatic number:}
\defproblem{Game Coloring Problem (2)($G,n,m,k$)}{Graph $G$ on $n$ vertices and $m$ edges, integer $k\geq 2$}{Does Alice have a winning strategy on the graph coloring game with $k$ colors?}

\begin{lemma}\label{lem:fwd}
    If Alice has a winning strategy in the Game Coloring Problem (2) on $G$, then she has a winning strategy in the Majority Coloring Game on $H$.
\end{lemma}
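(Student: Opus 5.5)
The plan is to show that, under the natural identification $v \leftrightarrow v'$, the majority coloring game on the partially colored graph $H$ is move-for-move the same game as the coloring game on $G$ with $k$ colors. Alice then simply copies her winning strategy from $G$.

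First I would check that the initial partial coloring of $H$ is a legal position. The only colored vertices are those of $W$. Each $w^i_{uv}$ has both of its neighbors $u',v'$ uncolored, so no majority condition is violated. The uncolored vertices of $H$ are exactly the vertices of $V'$, which correspond bijectively to $V(G)$. So a move in either game consists of choosing a vertex of $G$ (equivalently of $V'$) together with a color from $[1,k]$.

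The key step is to characterize legality. I claim that, in any position reachable in $H$, coloring $v'$ with color $i$ is legal if and only if no neighbor $u$ of $v$ in $G$ currently has $u'$ colored $i$. That is exactly the legality rule of the proper coloring game on $G$. By Observation~\ref{obs_checkx}, only vertices of $N_H[v']$ need checking, and I would check them in two steps.

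\begin{itemize}
\item \textbf{The vertex $v'$ itself.} Its degree is $k\deg_G(v)$, and exactly $\deg_G(v)$ of its neighbors carry each color $i$. Since $k\geq 2$, we have $\deg_G(v)\leq\lfloor k\deg_G(v)/2\rfloor$, so $v'$ can never violate the majority condition. This also holds trivially when $\deg_G(v)=0$.
\item \textbf{A neighbor $w^j_{uv}$ of $v'$.} This vertex has degree $2$ and color $j$, so its condition fails exactly when both $u'$ and $v'$ are colored $j$. Coloring $v'$ with $i$ can therefore only affect the vertices $w^i_{uv}$, and it violates the condition at $w^i_{uv}$ precisely when $u'$ is already colored $i$.
\end{itemize}

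This proves the claim. I would note in passing that the condition at $V'$-vertices is vacuous at every stage, so violations can only ever come from $W$.

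With this equivalence, the positions of the two games correspond via $c_H(v')=c_G(v)$, and legal moves correspond bijectively. Alice moves first in both games. Her strategy on $H$ is to translate each of Bob's moves $(v',i)$ to the move $(v,i)$ in $G$, consult her winning strategy there, and play the translated response in $H$. Her strategy in $G$ ensures that every vertex of $G$ eventually gets colored, hence every vertex of $V'$ does too. Since $W$ is precolored, all of $H$ is colored and Alice wins.

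The main obstacle is the legality equivalence, specifically making sure that the vertices $v'$ themselves never impose any constraint. The counting step is the place to be careful: it must show that the per-color neighbor count $\deg_G(v)$ never exceeds $\lfloor k\deg_G(v)/2\rfloor$, and this is exactly where the hypothesis $k\geq 2$ is used.
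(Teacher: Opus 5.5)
Your proposal is correct and follows the same route as the paper: Alice copies her winning strategy from $G$ via $v\leftrightarrow v'$. The key checks are that $v'$ itself can never violate the majority condition, because exactly $\deg_G(v)\le\lfloor k\deg_G(v)/2\rfloor$ of its neighbors share any given color, and that a violation at $w^j_{uv}$ occurs exactly when $u'$ is already colored $j$. Your two-sided legality equivalence is slightly more careful than the paper's proof of this lemma, since it also shows that each of Bob's legal moves on $H$ translates to a legal move on $G$; the paper leaves this implicit here and only addresses it in the converse lemma.
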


\begin{proof}

Suppose Alice starts by coloring a vertex $u$ in $G$ with a color $i$ according to her winning strategy, she starts by coloring the corresponding vertex $u^\prime$ by the same color in $H$. Whenever Bob colors a vertex $v^\prime \in V(H)$, Alice responds by playing according to the strategy in $G$ where Bob would have colored the corresponding vertex $v$. Note that Bob can only color a vertex in $V^\prime$ and not in $W$, as the vertices in $W$ are already colored.
     
Suppose that there is a move according to the above strategy which is not legal for Alice on $H$, i.e., there is some vertex $v^\prime$ in $H$ and color $j$ such that coloring $v^\prime$ violates the majority condition for some $x\in N_H[v^\prime]$. Suppose $x=v^\prime$, i.e., $v^\prime$ has more than half of its neighbors colored $j$. But all the neighbors of $v^\prime$ are in $W$ and $N_H(v^\prime)=\{w^{i}_{uv}| uv\in E(G)\text{ and }i\in[1,k]\}$, i.e, and the neighbors of $v^\prime$ which are colored $j$ are exactly the vertices in the set $\{w^{j}_{uv}| uv\in E(G)\}$. Therefore, the fraction of the neighbors of $v^\prime$ colored $j$ is given by $\frac{\deg_Gv}{\deg_Gv\cdot k}=\frac{1}{k}$ which is less than or equal to $\frac{1}{2}$ for $k\geq2$. Therefore, more than half of the neighbors of $v^\prime$ cannot be colored $j$. Hence, the majority condition must be violated at some $w^{j}_{uv}\in N_H(v^\prime)$ after coloring $v^\prime$ with color $j$. But the neighbors of $w^{j}_{uv}$ are exactly $u^\prime$ and $v^\prime$ (corresponding to an edge $uv$ in $G$). So if coloring $v^\prime$ with color $j$ results in more than half the neighbors of $w^{j}_{uv}$ being colored with color $j$, then $u^\prime$ must have been already colored $j$. Then according to the given strategy, $u$ must have been already colored $j$ in $G$, in the Game Coloring Problem (2). But then Alice's strategy in this game requires her to color $v$ with color $j$, which is not possible as both the endpoints of the edge $uv$ cannot have the same color in the Game Coloring Problem (2). This contradicts the assumption that this was a winning strategy for Alice in the Game Coloring Problem (2). Thus, every move described according to our strategy on $H$ for the Majority Coloring Game is legal for Alice. Thus, the entire graph ends up being colored with a majority coloring, and Alice has a winning strategy in the Majority Coloring Game.

\end{proof}

The converse of the above lemma also holds. 

\begin{lemma}\label{lem:rev}
    If Alice has a winning strategy in the Majority Coloring Game on $H$, then she has a winning strategy in the Game Coloring Problem (2) on $G$.
\end{lemma}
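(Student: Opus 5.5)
The plan is a strategy-stealing (simulation) argument in the reverse direction of Lemma~\ref{lem:fwd}. Fix a winning strategy $\sigma$ for Alice in the Majority Coloring Game on $H$. Alice plays the Game Coloring Problem (2) on $G$ by maintaining a simulated game on $H$ in which the uncolored vertices $v^\prime$ always carry exactly the colors of the corresponding vertices $v$ of $G$. When $\sigma$ tells her to color $v^\prime$ with color $j$, she colors $v$ with $j$ in $G$. When Bob colors $v$ with $j$ in $G$, she records that Bob colored $v^\prime$ with $j$ in $H$ and consults $\sigma$ again. Since all of $W$ is precolored, every move in $H$ is a move on $V^\prime$, so the two games have the same length and the same alternation of turns. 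The translation is therefore a bijection between move sequences.

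The key step is a legality equivalence. A move ``color $v^\prime$ with $j$'' is legal in $H$ exactly when the move ``color $v$ with $j$'' is legal in $G$. I would reuse the computation from the proof of Lemma~\ref{lem:fwd}. At $v^\prime$ itself, exactly a $1/k \le 1/2$ fraction of the neighbours have color $j$, so $v^\prime$ never violates the majority condition. At a neighbour $w^{i}_{uv}$ with $i\neq j$, at most one of its two neighbours can have color $i$ after the move, so no violation occurs there either. The only possible violation is at $w^{j}_{uv}$, which has degree $2$ and may have at most $\lfloor 2/2\rfloor = 1$ neighbour colored $j$. This is violated exactly when $u^\prime$ is already colored $j$, which under the correspondence means $u$ is already colored $j$ with $uv\in E(G)$. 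That is precisely the properness rule of the Game Coloring Problem (2).

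From the equivalence, Bob's legal moves in $G$ translate to legal moves in $H$, so $\sigma$ remains applicable throughout the simulation. Likewise, $\sigma$'s moves in $H$ translate to legal moves in $G$. Because $\sigma$ is winning, every vertex of $V^\prime$ eventually gets colored, so every vertex of $G$ gets colored and Alice wins on $G$.

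The main obstacle I expect is the termination and blocking condition. The game on $G$ ends when some vertex has no legal color, and I must check that this corresponds exactly to a stuck vertex in $H$. I would handle it through the same equivalence: $v$ has no legal color in $G$ if and only if $v^\prime$ has no legal color in $H$. So a Bob win on $G$ would yield a Bob win in the simulated $H$-game, contradicting that $\sigma$ is winning.
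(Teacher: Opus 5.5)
Your proposal is correct and follows the paper's route: simulate Alice's winning strategy on $H$ through the correspondence $v \leftrightarrow v^\prime$, and observe that coloring $v^\prime$ with $j$ can break the majority condition only at $w^{j}_{uv}$, which happens exactly when a $G$-neighbour $u$ already has color $j$. Your two-sided legality equivalence goes further than the paper, which only checks that Alice's translated moves are proper in $G$. It also shows that Bob's legal moves in $G$ stay legal in $H$ and that stuck vertices correspond, which the paper's proof leaves implicit.
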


\begin{proof}
If Alice starts by coloring a vertex $u^\prime\in V^\prime$ in $H$ with a color $i$ according to her winning strategy on $H$, she starts by coloring the corresponding vertex $u$ by the same color in $G$. Whenever Bob colors a vertex $v \in V(G)$, Alice responds by playing according to the strategy in $H$ where Bob would have colored the corresponding vertex $v^\prime$. Note that both Alice and Bob can only color a vertex in $V^\prime$ and not in $W$, as the vertices in $W$ are already colored.

We show that the strategy described above is indeed, a valid winning strategy for Alice in the Game Coloring Problem (2) on $G$, that is, the entire graph $G$ ends up being properly colored (no edge has both endpoints of same color) if Alice plays by this strategy. Suppose that there is some move which violates this condition, i.e., the strategy requires Alice to color a vertex $v$ with color $j$ such that there is a vertex $u$ adjacent to the vertex $v$, and $u$ is already colored $j$. But since $u$ is already colored $j$, in the graph $H$, the vertex $u^\prime$ must also have been already colored $j$. Since $uv$ is an edge in $G$, by the construction of $H$, there exists $w^{j}_{uv}$ such that $N_H(w^{j}_{uv})=\{u^\prime,v^\prime\}$ and $w^{j}_{uv}$ is assigned color $j$. Since $u^\prime$ is also colored $j$, assigning color $j$ to $v^\prime$ causes all the neighbors of the vertex $w^{j}_{uv}$ to be colored with color $j$, violating the majority condition at $w^{j}_{uv}$. Thus, assigning color $j$ to $v^\prime$ could not have been a legal move for Alice in the Majority Coloring Game on $H$. Hence, the strategy described earlier cannot require Alice to color the vertex $v$ in $G$ with color $j$, which was violating the proper coloring condition on $G$. Hence, the graph $G$ ends up being properly colored and thus, this is a winning strategy for Alice in the Game Coloring Problem (2) on $G$.
\end{proof}

\begin{figure}
    \centering
    \begin{tikzpicture}[scale=0.8] 
    \draw(0,0) circle (0.2 cm);
    \node at (0,-0.5){$\small{u}$};
    \draw(1,0) circle (0.2 cm);
    \node at (1,-0.5){$\small{v}$};
    \draw(0.2,0)--(0.8,0);
    \draw(4,0) circle (0.2 cm);
    \node at (4,-0.5){$\small{u^\prime}$};
    \draw(6,0) circle (0.2 cm);
    \node at (6,-0.5){$\small{v^\prime}$};
    \draw(4.2,0)--(4.8,0);
    \draw[fill=DodgerBlue!70](5,0) circle (0.2 cm);
    \draw (5.2,0)--(5.8,0);
    \node at (5,-0.5){$\small{w^2_{uv}}$};
    \draw[fill=IndianRed!80](5,1) circle (0.2 cm);
    \node at (5,0.5){$\small{w^1_{uv}}$};
    \draw[fill=SeaGreen!70](5,-1) circle (0.2 cm);
    \node at (5,-1.5){$\small{w^3_{uv}}$};
    \draw (4.15,0.15)--(4.85,0.85);
    \draw (5.15,0.85)--(5.85,0.15);
    \draw (4.15,-0.15)--(4.85,-0.85);
    \draw (5.15,-0.85)--(5.85,-0.15);
   \end{tikzpicture}
    \caption{Construction of the three length-two paths in the reduced instance. Note that coloring $u^\prime$ and $v^\prime$ with the same color $i$, violates the majority condition at $w^i_{uv}$.}
    \label{fig:constructionone}
\end{figure}

\defproblem{Extended Majority $2$-Coloring Game $(G,n)$}{Graph $G$ on $n$ vertices, partially colored with two colors.}{Does Alice have a winning strategy on this partially colored graph $G$, using at most two colors?}

\defproblem{POS-CNF}{A positive Boolean formula $\Phi$ with $m$ clauses and $n$ variables, each clause containing at most $6$ literals.}{Does Alice have a winning strategy on $\Phi$?}

\begin{figure}[h!]
\begin{subfigure}{\linewidth}
\centering
   \begin{tikzpicture}[scale=0.8]
       \foreach \x in {0,3,...,9}
         \draw (\x,0) circle (0.3 cm);
    \foreach \x in {0,1,...,3}
      {\pgfmathtruncatemacro{\z}{\x+1}
      \pgfmathtruncatemacro{\y}{3*\x}
      \node at (\y,0){$x_\z$};}
    \foreach \x in {0,3,...,9}
         \draw (\x+2,0) circle (0.3 cm);
    \foreach \x in {0,1,...,3}
      {\pgfmathtruncatemacro{\z}{\x+1}
      \pgfmathtruncatemacro{\y}{3*\x}
      \node at (\y+2,0){$\bar{x_\z}$};}
    \foreach \x in {0,3,...,9}
        {\draw(\x+0.2,0.2)--(\x+0.7,0.7);
        \draw(\x+0.2,-0.2)--(\x+0.7,-0.7);
        \draw(\x+1.8,0.2)--(\x+1.3,0.7);
        \draw(\x+1.8,-0.2)--(\x+1.3,-0.7);
        }
    \foreach \x in {0,3,...,9}
         {\draw[fill=IndianRed!80] (\x+1,0.7) circle (0.3 cm);
         \draw[fill=DodgerBlue!70](\x+1,-0.7) circle (0.3 cm);
         }
    \foreach \x in {0,1,...,3}
      {\pgfmathtruncatemacro{\z}{\x+1}
      \pgfmathtruncatemacro{\y}{3*\x}
      \node at (\y+1,0.7){$r_\z$};
      \node at (\y+1,-0.7){$b_\z$};}

    \draw[fill=IndianRed!80](0,-2) circle (0.2 cm);
    \draw[fill=DodgerBlue!70](-0.5,-2.5) circle (0.2 cm);
    \draw[fill=DodgerBlue!70](0.5,-2.5) circle (0.2 cm);
     \draw[fill=IndianRed!80](0,-3) circle (0.2 cm);
    \draw(-0.35,-2.4)--(-0.15,-2.1);
    \draw(0.35,-2.4)--(0.15,-2.1);
     \draw(-0.35,-2.6)--(-0.15,-2.9);
    \draw(0.35,-2.6)--(0.15,-2.9);
    
    \draw[fill=IndianRed!80](4,-2) circle (0.2 cm);
    \draw[fill=DodgerBlue!70](3.5,-2.5) circle (0.2 cm);
    \draw[fill=DodgerBlue!70](4.5,-2.5) circle (0.2 cm);
     \draw[fill=IndianRed!80](4,-3) circle (0.2 cm);
    \draw(3.65,-2.4)--(3.85,-2.1);
    \draw(4.35,-2.4)--(4.15,-2.1);
     \draw(3.65,-2.6)--(3.85,-2.9);
    \draw(4.35,-2.6)--(4.15,-2.9);

    \draw[fill=IndianRed!80](7,-2) circle (0.2 cm);
    \draw[fill=DodgerBlue!70](6.5,-2.5) circle (0.2 cm);
    \draw[fill=DodgerBlue!70](7.5,-2.5) circle (0.2 cm);
     \draw[fill=IndianRed!80](7,-3) circle (0.2 cm);
    \draw(6.65,-2.4)--(6.85,-2.1);
    \draw(7.35,-2.4)--(7.15,-2.1);
     \draw(6.65,-2.6)--(6.85,-2.9);
    \draw(7.35,-2.6)--(7.15,-2.9);

     \draw[fill=IndianRed!80](10,-2) circle (0.2 cm);
    \draw[fill=DodgerBlue!70](9.5,-2.5) circle (0.2 cm);
    \draw[fill=DodgerBlue!70](10.5,-2.5) circle (0.2 cm);
     \draw[fill=IndianRed!80](10,-3) circle (0.2 cm);
    \draw(9.65,-2.4)--(9.85,-2.1);
    \draw(10.35,-2.4)--(10.15,-2.1);
     \draw(9.65,-2.6)--(9.85,-2.9);
    \draw(10.35,-2.6)--(10.15,-2.9);
    
    \draw(0,-0.3)--(0,-1.8);
    \draw(3,-0.3)--(3.9,-1.8);
    \draw(6,-0.3)--(6.9,-1.8);
    \draw(9,-0.3)--(9.9,-1.8);

    \draw[fill=DodgerBlue!70](2.5,4) circle (0.5 cm);
    \node at (2.5,4){$C_1$};

    \draw[fill=DodgerBlue!70](10.5,4) circle (0.5 cm);
    \node at (10.5,4){$C_2$};

    \draw[fill=IndianRed!80](2.5,5.5) circle (0.5 cm);
    \node at (2.5,5.5){$D^1_1$};
    \draw[fill=IndianRed!80](10.5,5.5) circle (0.5 cm);
    \node at (10.5,5.5){$D^1_2$};

    \draw(2.5,4.5)--(2.5,5);
    \draw(10.5,4.5)--(10.5,5);

    \draw(0,0.3)--(2.15,3.65);
    \draw(3,0.3)--(2.5,3.5);
    \draw(6,0.3)--(10,4);
    \draw(9,0.3)--(10.1,3.7);
    
   \end{tikzpicture}
   \caption{A reduced instance from the instance of POS-CNF $\Phi=C_1\land C_2$, where $C_1=x_1\lor x_2$ and $C_2=x_3\lor x_4$. Alice has the following winning strategy: Set $x_1$ to $T$. If Bob does not set $x_3$ to $F$, set $x_3$ to $T$. If Bob sets $x_3$ to $F$, set $x_4$ to $T$.}
\end{subfigure}
\\
\\
\begin{subfigure}{\linewidth}
    \centering
   \begin{tikzpicture}[scale=0.8]
       \foreach \x in {0,5,8,9}
         \draw[fill=IndianRed!80] (\x,0) circle (0.3 cm);
      \foreach \x in {2,3,6,11}
         \draw[fill=DodgerBlue!70] (\x,0) circle (0.3 cm);
    \foreach \x in {0,1,...,3}
      {\pgfmathtruncatemacro{\z}{\x+1}
      \pgfmathtruncatemacro{\y}{3*\x}
      \node at (\y,0){$x_\z$};}
      \foreach \x in {0,1,...,3}
      {\pgfmathtruncatemacro{\z}{\x+1}
      \pgfmathtruncatemacro{\y}{3*\x}
      \node at (\y+2,0){$\bar{x_\z}$};}
    \foreach \x in {0,3,...,9}
        {\draw(\x+0.2,0.2)--(\x+0.7,0.7);
        \draw(\x+0.2,-0.2)--(\x+0.7,-0.7);
        \draw(\x+1.8,0.2)--(\x+1.3,0.7);
        \draw(\x+1.8,-0.2)--(\x+1.3,-0.7);
        }
    \foreach \x in {0,3,...,9}
         {\draw[fill=IndianRed!80] (\x+1,0.7) circle (0.3 cm);
         \draw[fill=DodgerBlue!70](\x+1,-0.7) circle (0.3 cm);
         }
    \foreach \x in {0,1,...,3}
      {\pgfmathtruncatemacro{\z}{\x+1}
      \pgfmathtruncatemacro{\y}{3*\x}
      \node at (\y+1,0.7){$r_\z$};
      \node at (\y+1,-0.7){$b_\z$};}

    \draw[fill=IndianRed!80](0,-2) circle (0.2 cm);
    \draw[fill=DodgerBlue!70](-0.5,-2.5) circle (0.2 cm);
    \draw[fill=DodgerBlue!70](0.5,-2.5) circle (0.2 cm);
     \draw[fill=IndianRed!80](0,-3) circle (0.2 cm);
    \draw(-0.35,-2.4)--(-0.15,-2.1);
    \draw(0.35,-2.4)--(0.15,-2.1);
     \draw(-0.35,-2.6)--(-0.15,-2.9);
    \draw(0.35,-2.6)--(0.15,-2.9);
    
    \draw[fill=IndianRed!80](4,-2) circle (0.2 cm);
    \draw[fill=DodgerBlue!70](3.5,-2.5) circle (0.2 cm);
    \draw[fill=DodgerBlue!70](4.5,-2.5) circle (0.2 cm);
     \draw[fill=IndianRed!80](4,-3) circle (0.2 cm);
    \draw(3.65,-2.4)--(3.85,-2.1);
    \draw(4.35,-2.4)--(4.15,-2.1);
     \draw(3.65,-2.6)--(3.85,-2.9);
    \draw(4.35,-2.6)--(4.15,-2.9);

    \draw[fill=IndianRed!80](7,-2) circle (0.2 cm);
    \draw[fill=DodgerBlue!70](6.5,-2.5) circle (0.2 cm);
    \draw[fill=DodgerBlue!70](7.5,-2.5) circle (0.2 cm);
     \draw[fill=IndianRed!80](7,-3) circle (0.2 cm);
    \draw(6.65,-2.4)--(6.85,-2.1);
    \draw(7.35,-2.4)--(7.15,-2.1);
     \draw(6.65,-2.6)--(6.85,-2.9);
    \draw(7.35,-2.6)--(7.15,-2.9);

     \draw[fill=IndianRed!80](10,-2) circle (0.2 cm);
    \draw[fill=DodgerBlue!70](9.5,-2.5) circle (0.2 cm);
    \draw[fill=DodgerBlue!70](10.5,-2.5) circle (0.2 cm);
     \draw[fill=IndianRed!80](10,-3) circle (0.2 cm);
    \draw(9.65,-2.4)--(9.85,-2.1);
    \draw(10.35,-2.4)--(10.15,-2.1);
     \draw(9.65,-2.6)--(9.85,-2.9);
    \draw(10.35,-2.6)--(10.15,-2.9);
    
    \draw(0,-0.3)--(0,-1.8);
    \draw(3,-0.3)--(3.9,-1.8);
    \draw(6,-0.3)--(6.9,-1.8);
    \draw(9,-0.3)--(9.9,-1.8);

    \draw[fill=DodgerBlue!70](2.5,4) circle (0.5 cm);
    \node at (2.5,4){$C_1$};

    \draw[fill=DodgerBlue!70](10.5,4) circle (0.5 cm);
    \node at (10.5,4){$C_2$};

    \draw[fill=IndianRed!80](2.5,5.5) circle (0.5 cm);
    \node at (2.5,5.5){$D^1_1$};
    \draw[fill=IndianRed!80](10.5,5.5) circle (0.5 cm);
    \node at (10.5,5.5){$D^1_2$};

    \draw(2.5,4.5)--(2.5,5);
    \draw(10.5,4.5)--(10.5,5);

    \draw(0,0.3)--(2.15,3.65);
    \draw(3,0.3)--(2.5,3.5);
    \draw(6,0.3)--(10,4);
    \draw(9,0.3)--(10.1,3.7);

\end{tikzpicture}
    \caption{Example of a game-play where Alice plays by the above strategy: Alice colors $x_1$ red, Bob colors $\bar{x_3}$ red, Alice colors $x_4$ red. Then Bob colors $x_2$ blue, Alice colors $\bar{x_1}$ blue, Bob colors $\bar{x_2}$ red, Alice colors $x_3$ blue, Bob colors $x_4$ blue. $G$ is now majority colored.}
\end{subfigure}
    \caption{Construction of a reduced instance of Extended Majority $2$-Coloring from a given instance of POS-CNF and demonstration of a game play when Alice has a winning strategy.}
    \label{fig:reductioncnfAlice}
\end{figure}

\textbf{Complete description of the construction in \Cref{thm:majtwocoloringgame}:}
 
For each clause $C_j$ ($j\in [1,m]$), create a vertex in the graph $G$. We abuse notation and label the vertex corresponding to the clause $C_j$ as $C_j$. We call these vertices as \emph{clause vertices}. If the clause $C_j$ has $k$ literals, add $k-1$ pendant (degree one) vertices adjacent to each $C_j$ and label these vertices adjacent to $C_j$ as $D^1_j,D^2_j,\ldots,D^{k-1}_j$. Denote the union of the clause vertices as $C$ and the union of the corresponding pendant vertices as $D$. Assign the color blue to each $C_j$ from $C$ and the color red to all the vertices (at most $5m$) in $D$.

Now for each variable $x_i$ ($i\in [1,n]$), create a pair of vertices in the graph $G$ and label them $x_i$ and $\bar{x_i}$ (abusing notation). We call the vertices $x_i$ (for $i\in [1,n]$) as \emph{variable vertices} and denote their union as $X$. We call the vertices $\bar{x_i}$ (for $i\in [1,n]$) as \emph{leaf vertices} and denote their union as $L$.

Add an intermediate vertex $r_i$ between each variable vertex $x_i$ and the leaf vertex corresponding to its complement $\bar{x_i}$. We denote the set of these intermediate vertices by $R$, and also refer to them as \emph{intermediate vertices}. Add an edge between $x_i$ and $r_i$, and an edge between $\bar{x_i}$ and $r_i$, for each $i\in [1,n]$. That is, the graph induced by $X\cup L\cup R$ is a disjoint union of $n$ length-two paths. For each $r_i$ in $R$, add a vertex $b_i$ which is a false twin of $r_i$ (that is, make $b_i$ adjacent to both $x_i$ and $\bar{x_i}$). We denote the union $B:=\cup_{i=1}^{n}\{b_i\}$. Color all the vertices in $R$ red and all the vertices in $B$ blue. The graph induced by $X\cup R\cup L\cup B$ is the disjoint union of $n$ four-cycles. 

Now, for each $i\in[1,n]$, if the variable $x_i$ appears in $k$ clauses, we add $k$ vertices adjacent to $x_i$ and color these vertices red. For each of these newly added red vertices, we add two blue neighbors. We denote the set of these newly added blue vertices as $B^\prime$. For each pair of vertices in $B^\prime$ which are adjacent to the same red vertex, we make both of these vertices adjacent to another new vertex and color it red. We denote the set of red vertices adjacent to the vertices in $B$ by $R$. Observe that $R^\prime\cup B^\prime$ induces a number of disjoint four-cycles, with the non-adjacent vertices having the same color. 

Finally, whenever a variable $x_i$ appears in the clause $C_j$, add an edge between the variable vertex $x_i$ and the clause vertex $C_j$.

\textbf{Proof of \Cref{lem:Alicewin}:}

We show that if Alice has a winning strategy in the \textsc{POS-CNF} game, then she has a winning strategy in the \textsc{Extended Majority $2$-Coloring Game}. The strategy for Alice works as follows:
\begin{itemize}
    \item If Alice's winning strategy in the POS-CNF game sets the variable $x_i$ to True, color the variable vertex $x_i$ red.
    \item If a variable vertex $x_k$ is colored red or blue by Bob, assume that Bob has set the variable $x_k$ to False in the POS-CNF game and proceed (according to the strategy in the POS-CNF game).
    \item If a leaf vertex $\bar{x_k}$ is colored red or blue by Bob, assume that Bob has set the variable $x_k$ to False in the POS-CNF game and proceed (according to the strategy in the POS-CNF game).
    \item If none of the above is possible, color an uncolored vertex $x_i$ (or $\bar{x_i}$) with the color opposite to the vertex $\bar{x_i}$ (or $x_i$) (note that if both $x_i$ and $\bar{x_i}$ was not colored at this point, then the variable $x_i$ would be unassigned in the POS-CNF game, and then Alice would have a valid move according to the strategy for the POS-CNF game).
\end{itemize}

Note that all the vertices of the graph $G$ end up being colored when Alice plays according to this strategy and we now show that the coloring obtained is indeed a majority coloring of the graph $G$, and hence all the moves of the players were valid in the Extended Majority $2$-Coloring Game. Clearly, all the vertices in $D$ satisfy the majority condition because all the vertices in $D$ are red and have exactly one neighbor (in $C$), which is blue. The vertices in $R^\prime$ satisfy the majority condition because the vertices in $R^\prime$ are red and have at most one red neighbor and exactly two blue neighbors. The vertices in $B^\prime$ satisfy the majority condition because the vertices in $B^\prime$ are blue and have two red neighbors. Therefore, the majority condition is satisfied by the vertices in $D\cup B\cup R$. 

Note that for each $i\in[1,n]$, the vertices $x_i$ and $\bar{x_i}$ cannot have the same color. Clearly, Alice never assigns the same color to $x_i$ and $\bar{x_i}$ as per the given strategy. Suppose $x_i$ ($\bar{x_i}$) is colored red and Bob attempts to color $\bar{x_i}$ ($x_i$) red, it will not be possible according to the rules of the game as the majority condition is violated at $r_i$. Similarly, if $x_i$ ($\bar{x_i}$) is colored blue and Bob attempts to color $\bar{x_i}$ ($x_i$) blue, it will not be possible according to the rules of the game as the majority condition is violated at $b_i$. Therefore, $x_i$ and $\bar{x_i}$ must have opposite colors.

Since $x_i$ and $\bar{x_i}$ must have opposite colors (for each $i\in [1,n]$), the majority condition is satisfied at each $r_i$ and each $b_i$ (for all $i\in [1,n]$). Each leaf vertex $\bar{x_i}$ has two neighbors $r_i$ and $b_i$, out of which one is red and the other is blue, hence the majority condition is satisfied at $\bar{x_i}$, irrespective of whether $\bar{x_i}$ is colored red or blue. Therefore, the majority condition is satisfied by the vertices in $R\cup B\cup L$. 

Consider a variable vertex $x_i$ (where $i\in [1,n]$) in $X$, such that the variable $x_i$ appears in $k$ clauses of $\Phi$. By the construction, the variable vertex $x_i$ has $k$ blue neighbors in $C$, $k$ red neighbors in $R$, one red neighbor $r_i$, one blue neighbor $b_i$ and one neighbor ($\bar{x_i}$) of the opposite color. Therefore, the majority condition is not violated at $x_i$.

Now consider a clause vertex $C_j$ (where $j\in [1,m]$) in $C$. Suppose the clause $C_j$ has $k$ literals in $\Phi$, the clause vertex $C_j$ has $k-1$ red neighbors in $D$ and $k$ neighbors in $X$. Since Alice wins in the POS-CNF game, in the final assignment there exists a variable $x_i$ in $C_j$ which was set to True by Alice (note that we have assumed without loss of generality that Bob never sets any variables to True). According to Alice's strategy in the Extended Majority $2$-Coloring Game, she must have colored the variable vertex $x_i$ red. This is possible, because if Bob had already colored $\bar{x_i}$ red, then according to Alice's strategy, she considers the variable $x_i$ to be already set to False, hence the strategy would not have permitted her to color $x_i$. Also, coloring the vertex $x_i$ red cannot violate the majority condition at any clause vertex because all the clause vertices are colored blue. Also, when $x_i$ is colored by Alice, $\bar{x_i}$ was uncolored as seen above, and thus majority condition is not violated at $r_i$ or $b_i$, and we have already seen that coloring a vertex in $X$ does not violate the majority condition at a vertex in $R$. Thus, Alice has assigned the color red to $x_i$ and hence the clause vertex $C_j$ has at least one red neighbor and at most $k-1$ blue neighbors in $X$. Thus, the clause vertex $C_j$ is blue, has at least $k$ red neighbors and at most $k-1$ blue neighbors. Thus, the majority condition is satisfied at each vertex $C_j$ in $C$.

Thus, Alice has a strategy to get a majority coloring of the graph $G$.\qed

\textbf{Proof of \Cref{lem:Bobwin}:}

We show that if Bob has a winning strategy in the POS-CNF game, then he has a winning strategy in the \textsc{Extended Majority $2$-Coloring Game}. Consider the following strategy for Bob:
\begin{itemize}
    \item Whenever Alice colors the variable vertex $x_i$ (blue or red), assume that Alice has set the variable $x_i$ to True and proceed (according to Bob's winning strategy on $\Phi$).
    \item If Bob needs to set $x_i$ to False according to the strategy on $\Phi$, color the leaf vertex $\bar{x_i}$ red. 
    \item If none of the above is possible, color an uncolored vertex $x_i$ (or $\bar{x_i}$) with the color opposite to the vertex $\bar{x_i}$ (or $x_i$) (note that if both $x_i$ and $\bar{x_i}$ was not colored at this point, then the variable $x_i$ would be unassigned in the POS-CNF game, and then Bob would have a valid move according to the strategy for the POS-CNF game).
\end{itemize}

We show that this is indeed a winning strategy for Bob in the Extended Majority $2$-Coloring Game. Consider the final assignment of the variables after Bob plays according to the winning strategy of the POS-CNF game. Since this is a winning strategy for Bob, there exists a clause $C_j=x_{i_1}\lor x_{i_2}\lor\ldots\lor x_{i_k}$ such that $C_j$ is False, i.e., $x_{i_1},x_{i_2},\ldots,x_{i_k}$ are all set to False. Now consider the situation in the Extended Majority $2$-Coloring Game, when Bob has set $x_{i_1},x_{i_2},\ldots,x_{i_k}$ to False in the POS-CNF game (recall that Alice never sets a variable to False). The leaf vertices $\bar{x_{i_1}},\bar{x_{i_2}},\ldots,\bar{x_{i_k}}$ are colored red by Bob (because if Alice had colored either $x_{i_\ell}$ or $\bar{x_{i_\ell}}$, then in the strategy for Bob, the variable $x_{i_\ell}$ is assumed to be set to True). Now, all the vertices $x_{i_1},x_{i_2},\ldots,x_{i_k}$ must be colored blue because coloring any $x_{i_\ell}$ red violates the majority condition at $r_{i_\ell}$. Without loss of generality, we assume that $x_{i_1},x_{i_2},\ldots,x_{i_{k-1}}$ are colored blue when $x_{i_k}$ is uncolored. Recall that $x_{i_k}$ cannot be colored red because $\bar{x_{i_k}}$ is colored red and the majority condition at $r_{i_k}$ will be violated. If $x_{i_k}$ is colored blue, then the clause vertex $C_j$ has $k$ blue neighbors ($x_{i_1},x_{i_2},\ldots,x_{i_k}$), and $k-1$ red neighbors $D^1_j,D^2_j,\ldots,D^{k-1}_j$. Therefore, the majority condition is violated at $C_j$, since $C_j$ is colored blue. Therefore, the variable vertex $x_{i_k}$ cannot be assigned the color blue as well. Hence, all the vertices of the graph $G$ cannot be colored and Bob wins in the Extended Majority $2$-Coloring Game.\qed

\end{document}